\newtheorem{theorem}{Theorem}
\newtheorem{lemma}{Lemma}
\newtheorem{assumption}{Assumption}
\newtheorem{proposition}{Proposition}
\newtheorem{corollary}{Corollary}
\newtheorem{remark}{Remark}
\newcommand\ta{\widetilde{\alpha}}
\newcommand\tu{\widetilde{\mu}}
\newcommand\bz{\mathbf{z}}
\newcommand\bq{\mathbf{q}}
\newcommand\mE{\mathbb{E}}
\newcommand\bg{\mathbf{g}}
\newcommand\bv{\mathbf{v}}
\newcommand\bx{\mathbf{x}}
\newcommand\bW{\mathbf{W}}
\newcommand\bd{\mathbf{d}}
\newcommand\bH{\mathbf{H}}
\newcommand\bu{\mathbf{u}}
\newcommand\btau{\mathbf{\tau}}
\newcommand\ox{\overline{\mathbf{x}}}
\newcommand\og{\overline{\mathbf{g}}}
\newcommand\od{\overline{\mathbf{d}}}
\newcommand\oH{\overline{\mathbf{H}}}
\newcommand\ov{\overline{\mathbf{v}}}
\newcommand\otau{\overline{\mathbf{\tau}}}
\begin{document}
%
% paper title
% Titles are generally capitalized except for words such as a, an, and, as,
% at, but, by, for, in, nor, of, on, or, the, to and up, which are usually
% not capitalized unless they are the first or last word of the title.
% Linebreaks \\ can be used within to get better formatting as desired.
% Do not put math or special symbols in the title.
%\title{Stochastic Quasi-Newton Methods with Variance Reduction for Decentralized Learning: Part I}
\title{Variance-Reduced Stochastic Quasi-Newton Methods for Decentralized Learning: Part I}
\author{Jiaojiao Zhang{$^1$}, Huikang Liu{$^2$}, Anthony Man-Cho So{$^1$}, and Qing Ling{$^3$}% <-this % stops a space
	%\thanks{Qing Ling is supported in part by NSF China Grants 61573331 and 61973324, and Fundamental Research Funds for the Central Universities.}% <-this % stops a space
	\thanks{Jiaojiao Zhang and Anthony Man-Cho So are with the Department of Systems Engineering and Engineering Management, The Chinese University of Hong Kong.}%
	\thanks{Huikang Liu is with the Business School, Imperial College London.}
	\thanks{Qing Ling is with the School of Computer Science and Engineering and Guangdong Province Key Laboratory of Computational Science, Sun Yat-Sen University, as well as the Pazhou Lab.}%
}
\maketitle

% As a general rule, do not put math, special symbols or citations
% in the abstract or keywords.
\begin{abstract}
In this work, we investigate stochastic quasi-Newton methods for minimizing a finite sum of cost functions over a decentralized network. In Part I, we develop a general algorithmic framework that incorporates stochastic quasi-Newton approximations with variance reduction so as to achieve fast convergence. At each time each node constructs a local, inexact quasi-Newton direction that asymptotically approaches the global, exact one. To be specific, (i) A local gradient approximation is constructed by using dynamic average consensus to track the average of variance-reduced local stochastic gradients over the entire network; (ii) A local Hessian inverse approximation is assumed to be positive definite with bounded eigenvalues, and how to construct it to satisfy these assumptions will be given in Part II. Compared to the existing decentralized stochastic first-order methods, the proposed general framework introduces the second-order curvature information without incurring extra sampling or communication. With a fixed step size, we establish the conditions under which the proposed general framework linearly converges to an exact optimal solution.
\end{abstract}

% Note that keywords are not normally used for peerreview papers.
\begin{IEEEkeywords}
Decentralized optimization, stochastic quasi-Newton methods, variance reduction.
\end{IEEEkeywords}

% For peer review papers, you can put extra information on the cover
% page as needed:
% \ifCLASSOPTIONpeerreview
% \begin{center} \bfseries EDICS Category: 3-BBND \end{center}
%
%
% For peerreview papers, this IEEEtran command inserts a page break and
% creates the second title. It will be ignored for other modes.
\IEEEpeerreviewmaketitle

%%%%%%%%%%%%%%%%%%%%%%%%%%%%%%%%%%%%%%%%%%%%%%%%%%%%%%%%%%%%%%%%%%%%%%%%%%%%%%%%%%%%%%%%%%%%%%%%%%%%
\section{Introduction}
There has been steadily growing interest in machine learning over networks in various areas, such as large-scale learning \cite{lian2017can,assran2019stochastic,amiri2020machine,lee2020optimization}, privacy-preserving learning \cite{brisimi2018federated,warnat2021swarm}, decentralized system control \cite{molzahn2017survey,li2017dynamical}, etc. These applications often can be formulated as decentralized learning problems. In this paper, we focus on a decentralized learning problem over an undirected and connected network, where $n$ nodes cooperatively look for a minimizer of the average cost
\begin{align}\label{c1}
{x}^* =\arg\min_{x\in\mathbb{R}^{d}} ~ F(x) \triangleq  \frac{1}{n} \sum_{i=1}^{n} f_{i}(x).
\end{align}
Here, $x$ is the decision variable and $f_{i}: \mathbb{R}^d\to \mathbb{R}$ is the local cost function of node $i$, represented as the average of $m_i$ sample costs in the form of \begin{align}\label{samplecost}
f_i(x)\triangleq \frac{1}{m_i} \sum_{l=1}^{m_i} f_{i,l}(x),
\end{align}
in which $f_{i,l}: \mathbb{R}^d\to \mathbb{R}$ is the $l$-th sample cost on node $i$, assumed to be differentiable. To agree on an optimal solution ${x}^*$ to \eqref{c1}, the nodes are allowed to communicate with their neighbors and perform local computation. However, at each time each node $i$ is unable to access the local cost function $f_i$ or the local full gradient $\nabla f_i$ since they may involve a large number of samples. Instead, at each time each node $i$ can access one or a mini-batch of sample costs and compute the local stochastic gradient.

In recent years, a large number of algorithms have been proposed for solving \eqref{c1}. Among them, decentralized stochastic first-order methods are appealing due to their low computation complexity. In contrast, decentralized stochastic second-order methods are rarely studied. The goal of this paper is to devise computationally affordable decentralized stochastic second-order methods to accelerate the learning process.

%We ask whether one can incorporate curvature information, without increasing much computation or communication, to accelerate these optimization methods.

%Every node privately owns its local cost function $f_{i,l}$ and is able to directly obtain the local information, i.e., the gradient $\nabla f_{i,l}$.

\subsection{Decentralized Deterministic Algorithms}

For the decentralized learning problem \eqref{c1}, when the numbers of local samples $m_i$ are sufficiently small such that each node $i$ is affordable to compute the local full gradient $\nabla f_i$ or even the local full Hessian $\nabla^2 f_i$, there are many decentralized deterministic first-order and second-order algorithms.

Distributed gradient descent (DGD) is a popular first-order method that combines local gradient descent with average consensus, but is unable to achieve exact convergence when using a fixed stepsize \cite{nedic2009distributed,yuan2016convergence}. This convergence error can be eliminated with local historic information, for example, in exact first-order algorithm (EXTRA) \cite{shi2015extra,shi2015proximal}, primal-dual methods \cite{chang2014multi,shi2014linear,zhang2020penalty}, exact diffusion \cite{yuan2018exact1,yuan2018exact}, and gradient tracking \cite{xu2015augmented,qu2017harnessing,sun2019convergence}.

Although the first-order algorithms are widely used in decentralized learning due to their low computation complexity, second-order methods are able to achieve faster convergence. Several works penalize the consensus constraints (namely, all the local decision variables must be eventually consensual) to the cost function, and obtain approximate Newton directions that are computable in decentralized manners \cite{mokhtari2016network,bajovic2017newton,mansoori2019fast}. A decentralized quasi-Newton method is proposed in \cite{eisen2017decentralized}, also using the idea of penalization. However, with the penalty term, these algorithms only converge to a neighborhood of an optimal solution when using fixed stepsizes. This issue has been addressed in second-order primal-dual methods \cite{mokhtari2016dqm,mokhtari2016decentralized,eisen2019primal,9303887}. A decentralized approximate Newton-type algorithm is proposed in \cite{li2020communication}, adopting the gradient tracking technique such that the local gradients can track the global ones. A cubically-regularized Newton method with gradient tracking is explored in \cite{daneshmand2021newton}, running inexact, preconditioned Newton steps on each node. The work of \cite{zhang2020distributed} proposes a decentralized adaptive Newton method, where at each time each node runs a finite-time consensus inner loop.

%%% QL: Jie Lu, Klenic, etc

%\cite{shamir2014communication} proposes DANE for distributed centralized network and show DANE as an approximate Newton-type algorithm when all of the local loss functions are quadratic. Network-DANE proposed in \cite{2019Communication} extends DANE to the decentralized network by operating gradient tracking to track the global gradients and converges linearly for strongly convex objectives. Under the assumption that each node has a unique identifier, the work of \cite{zhang2020distributed} proposes a distributed adaptive
%Newton algorithm (DAN) with a finite-time set-consensus inner loop at each iteration and proves globally superlinear convergence.

\subsection{Decentralized Stochastic Algorithms}
When the numbers of local samples $m_i$ are large, decentralized deterministic algorithms become prohibitive due to the time-consuming computation of local full gradients and Hessians. Hence, decentralized stochastic algorithms, where at each time each node only accesses one or a mini-batch of sample costs and computes the local stochastic gradient, are favorable \cite{chen2012diffusion,tang2018d,pu2019sharp}.
%However, it is well known that these decentralized stochastic methods have poor optimization performance, since all the nodes only communicate  noisy local gradients with inherent variance.
To remedy the gradient noise brought by the local stochastic gradients, variance reduction techniques such as stochastic variance reduced gradient (SVRG) can be applied \cite{mokhtari2016dsa,xin2020variance,li2020optimal,pu2020distributed,hendrikx2020optimal,hendrikx2020dual,li2020communication}.
%The work of \cite{li2020communication} combines Network-DANE with variance reduction method and propose a linearly convergent algorithm with sufficiently large multi-round communication at each iteration. Although the first-order stochastic decentralized methods are widely studied, second-order method are seldom introduced to the stochastic decentralized scenarios.
However, to the best of our knowledge, computationally affordable decentralized stochastic second-order methods have not been investigated.

In Part I of this work, we propose a general algorithmic framework that incorporates decentralized stochastic quasi-Newton approximations with variance reduction so as to achieve fast convergence. To be specific, at each time, each node first uses a variance reduction technique (for example, SVRG) to obtain a local corrected stochastic gradient, and then uses the dynamic average consensus method \cite{zhu2010discrete} to obtain an approximation of the global gradient. Further, the gradient approximations are used to construct the Hessian inverse approximations. We prove that if the constructed Hessian inverse approximations have bounded positive eigenvalues, then the proposed general framework converges linearly to an exact optimal solution of \eqref{c1}.

Note that using the gradient approximations to construct the Hessian inverse approximations is quite adventurous, since the gradient approximations are not necessarily reliable due to stochastic gradient noise and disagreement among the nodes. Naively adopting centralized quasi-Newton methods may end up with almost-singular Hessian inverse approximations, or even non-positive semidefinite ones. To address this issue, in Part II of this work, we propose two methods, damped regularized limited-memory DFP (Davidon-Fletcher-Powell) and damped limited-memory BFGS (Broyden-Fletcher-Goldfarb-Shanno), which are able to adaptively construct positive definite Hessian inverse approximations. Further, in Part II, we prove that the generated Hessian inverse approximations have bounded positive eigenvalues, which fits into the general framework in Part I. Convergence rates of the proposed general framework and the existing decentralized stochastic methods are summarized in Table \ref{tab1}.

\begin{table*}[h]
	\centering
	\caption{Stochastic gradient computation complexity of decentralized learning methods to reach an $\epsilon$-optimal solution of \eqref{c1}.}
	\begin{tabular}{c|c|c|c}
		\hline
		Algorithm\footnotemark{}  & Step size & Stochastic gradient
		computation complexity  & Batch size  \\ \hline
		DSA \cite{mokhtari2016dsa} & $\alpha=\mathcal{O}\left(\frac{\lambda_{\min}(\tilde{W})}{L\kappa_F}\right)$\footnotemark{}  & $\mathcal{O}\left(\max \left\{m \kappa_F, \frac{\kappa_F^{4}}{1-\sigma}, \frac{1}{(1-\sigma)^{2}}\right\} \log \frac{1}{\epsilon}\right)$ & $b=1$ \\ \hline
		GT-SVRG \cite{xin2020variance} &$\alpha=\mathcal{O}\left(\frac{\left(1-\sigma^{2}\right)^{2}}{L \kappa_F}\right)$  & $\mathcal{O}\left(\left(m+\frac{\kappa_F^{2} \log \kappa_F}{\left(1-\sigma^{2}\right)^{2}}\right) \log \frac{1}{\epsilon}\right)$ & $b=1$ \\ \hline
		GT-SAGA \cite{xin2020variance}  & $\alpha = \min \left\{\mathcal{O}\left(\frac{1}{\mu m}\right), \mathcal{O}\left( \frac{\left(1-\sigma^{2}\right)^{2}}{L \kappa_F}\right)\right\}$ & $\mathcal{O}\left(\max \left\{m,  \frac{\kappa_F^{2}}{\left(1-\sigma^{2}\right)^{2}}\right\} \log \frac{1}{\epsilon}\right)$ & $b=1$ \\ \hline
		VR-DIGing \cite{li2020optimal}  & $\alpha=\mathcal{O}\left(\frac{1}{\max \left\{L,  \frac{\mu}{(1-\sigma)^2}\right\}}\right)$ & $\mathcal{O}\left((m+\kappa_F)\log\frac{1}{\epsilon}
		\right)$ & $b=\frac{\max \left\{L, m \mu\right\}}{\max \left\{L, \frac{\mu}{(1-\sigma)^2}\right\}}$ \\ \hline
			Acc-VR-DIGing \cite{li2020optimal}  & $\alpha=\mathcal{O}(\frac{1}{L})$ & $\mathcal{O}\left((m+\sqrt{m\kappa_F})\log \frac{1}{\epsilon}\right)$  & $b=\sqrt{\frac{m (1-\sigma)^2 \max \left\{L, m \mu\right\}}{ L}}$ \\ \hline
		DFP and BFGS\footnotemark{}  & $	\alpha =\mathcal{O}\left(\frac{(1-\sigma^2)^2}{LM_2\kappa_F\kappa_H} \right)
	$ & $	\mathcal{O}\left(\left(m+\frac{b\cdot\kappa_F^2\kappa_H^2\log\frac{\kappa_F\kappa_H}{1-\sigma^2}}{(1-\sigma^2)^2}\right)\log\frac{1}{\epsilon}\right)$  & $\frac{m-b}{(m-1)b}\le \frac{1}{160}\min\left\{1, \frac{\zeta(1-\sigma^2)^2}{\gamma^2}\right\}$ \\ \hline
	\end{tabular}
	\label{tab1}
\end{table*}
\footnotetext[1]{For all algorithms, we set the numbers of local samples $m_i=m$ for all nodes $i$. The mini-batch sizes are $b_i=b$ for all nodes $i$. $\sigma$ is the second largest singular value of the mixing matrix $W$ defined in Assumption \ref{asm-W}. $L$ and $\mu$ are the smoothness and strong convexity constants, respectively. $\kappa_F=\frac{L}{\mu}$ is the condition number of the cost function $F$. $\gamma$ and $\zeta$ are defined in Theorem \ref{thom-parameter}.}
\footnotetext[2]{Here, $\tilde{W}=\frac{I+W}{2}$.}
\footnotetext[3]{Here, $\kappa_H=\frac{M_2}{M_1}$ is the condition number of $H$ defined in Theorem \ref{thom-parameter}. DFP needs extra $\mathcal{O}(Md^2)$ computation and $\mathcal{O}(d^2+Md)$ storage per iteration, while BFGS needs extra $\mathcal{O}(Md)$ computation and $\mathcal{O}(Md)$ storage per iteration. This will be shown in Part II.}

Throughout Part I and Part II, the cost functions are assumed to be differentiable but not necessarily twice differentiable. In the proposed framework we only use stochastic gradients to estimate Hessians, while in the analysis we do not need to use the true Hessians.

\textbf{Notations.}  $I_d\in \mathbb{R}^{d\times d}$ denotes the $d\times d$ identity matrix, and $1_n\in \mathbb{R}^{n}$ denotes the $n$-dimensional column vector of all ones. $\|\cdot\|$ denotes the Euclidean norm of a vector. $\otimes$ denotes the Kronecker product. For two vectors $x$ and $y$, $x\le y$ denotes that each entry of $x$ is smaller than that of $y$. For two matrices $A$ and $B$, $A\le B$ denotes that each entry of $A$ is smaller than that of $B$. $A\succeq 0$ and $A\succ 0$ refer that the matrix $A$ is positive semidefinite  and  positive definite, respectively.  $A\succeq B$ and $A\succ B$ mean $A-B \succeq 0$ and $A-B\succ 0$, respectively.  $\lambda_{\max}(\cdot)$ and $\lambda_{\min}(\cdot)$ denote the largest and smallest eigenvalues of a matrix, respectively. The $i$-th largest eigenvalue of a matrix is denoted by $\lambda_i(\cdot)$.  $\rho(\cdot)$ denotes the spectral radius of a matrix and  $\| \cdot \|_2$  denotes its spectral norm. For a positive vector $z=\left[z_{1}, \cdots, z_{d}\right]^{\top} \in \mathbb{R}^d$ and an arbitrary vector $a=\left[a_{1}, \cdots, a_{d}\right]^{\top}\in \mathbb{R}^d$,  $\|a\|_{\infty}^{z}=\max _{i}\left|a_{i}\right| / z_{i}$ denotes the weighted infinity norm of vector $a$ and $\|A\|_{\infty}^{{z}}$ is the
weighted infinity norm of matrix $A$ induced by the vector norm $\|\cdot\|_{\infty}^{{z}}$. Define
$\mathbf{W}=W\otimes I_d\in \mathbb{R}^{nd\times nd}$ and  $\bW_{\infty}=\frac{1_n 1_n^T}{n}\otimes I_d\in\mathbb{R}^{nd\times nd}$. Define the aggregated variable  $\mathbf{x}=[x_1;\cdots;x_n]\in \mathbb{R}^{nd}$ for $x_1, \cdots, x_n \in \mathbb{R}^d$, and similar aggregation rules apply to other variables $ \mathbf{d}, \mathbf{g}, \mathbf{v}$ and $\mathbf{\btau}$. Define the average variable over all the nodes at time $k$ as
$\ox^k=\frac{1}{n}\sum_{i=1}^n x_i^k=\frac{1}{n}(1_n^T\otimes I_d)\bx^k\in\mathbb{R}^d$
%$\ox^k=\frac{1}{n}\sum_{i=1}^n x_i^k\in\mathbb{R}^d$
and similar average rules apply to other variables $\od^k, \og^k, \ov^k$ and $\otau^k$. Define the aggregated gradient $\nabla f(\bx^k)=[\nabla f_1(x_1^k);\cdots; \nabla f_n(x_n^k)]\in \mathbb{R}^{nd}$. Define the average of all the local gradients at the local variables as  $\overline{\nabla f}(\bx^k)=\frac{1}{n} \sum_{i=1}^{n}\nabla f_i(x_i^k)\in\mathbb{R}^d$. Define $\nabla  F\left(\overline{\mathbf{x}}^{k}\right)=\frac{1}{n} \sum_{i=1}^{n}\nabla f_i(\ox^k) \in\mathbb{R}^d$ as the average of all the local gradients at the common average $\ox^k$. Define a block diagonal matrix $\bH^{k}=\operatorname{diag}\{H_i^{k}\}\in \mathbb{R}^{nd\times nd}$ whose $i$-th block is $H_i^{k} \in \mathbb{R}^{d\times d}$ and an average matrix $\oH^k=\frac{1}{n}\sum_{i=1}^{n} H_i^k\in \mathbb{R}^{d\times d}$. Given a random variable $v$, $\mathbb{E} [v]$ denotes the expectation and $\mathbb{E} [v|\mathcal{F}]$ denotes the expectation  conditioned in $\mathcal{F}$.
%%%%%%%%%%%%%%%%%%%%%%%%%%%%%%%%%%%%%%%%%%%%%%%%%%%%%%%%%%%%%%%%%%%%%%%%%%%%%%%%%%%%%%%%%%%%%%%%%%%
\section{Problem Formulation and Algorithm Development}
This section begins with introducing the problem formulation of decentralized learning and basic assumptions. Then we propose a general framework of variance-reduced decentralized stochastic quasi-Newton methods.
\subsection{Problem Formulation}
We consider an undirected and connected graph $\mathcal{G}=(\mathcal{V}, \mathcal{E})$ with node set $\mathcal{V}=\{1,\ldots,n\}$ and edge set $\mathcal{E}\subseteq \mathcal{V}\times \mathcal{V}$. Nodes $i \in \mathcal{V}$ and $j \in \mathcal{V}$ are neighbors and allowed to send information to each other if they are connected with an edge $(i,j)\in \mathcal{E}$. Each node $i$ has a local cost function $f_i$ in the form of \eqref{samplecost}, and makes decisions based on stochastic gradients of $f_i$ and information obtained from its neighbors. Define $\mathcal{N}_i$ as the set of neighbors of node $i$ including itself and let ${x}_{i} \in \mathbb{R}^{d}$ be the local copy of decision variable $x$ kept on node $i$. Since the network is bidirectionally connected, the optimization problem in \eqref{c1} is equivalent to
\begin{align}\label{d2}
\mathbf{x}^*= \underset{\mathbf{x}=[x_1;\cdots;x_n]}{{\arg\min}} & ~ f(\mathbf{x})\triangleq\frac{1}{n} \sum_{i=1}^{n} f_{i}\left({x}_{i}\right), \\\nonumber
\text { s.t. } & ~ {x}_{i}={x}_{j}, ~ \forall j \in \mathcal{N}_{i}, ~ \forall i,
\end{align}
in which we know that $\mathbf{x}^*\triangleq[x^*;\cdots;x^*]\in \mathbb{R}^{nd}$ by stacking $n$ vectors $x^*$ to a long column. By observation, \eqref{c1} and \eqref{d2} are equivalent in the sense that the optimal local variables ${x}_{i}^{*}$ of \eqref{d2} are all equal to the optimal argument $x^*$ of \eqref{c1}, i.e., ${x}_{1}^*=\cdots={x}_{n}^*=x^*$.

Throughout this paper, we make the following assumptions on the cost functions.
%\begin{assumption}[Convexity and $L$-smoothness]\label{asm-lips}
%	Each local sample cost $f_{i,l}$ is convex and has Lipschitz continuous gradients, i.e.,
%	$$\|\nabla f_i(x)-\nabla f_i(y)\|\le L \|x-y\|, ~\forall  x,y\in \mathbb{R}^{d},$$ where $L>0$ is the Lipschitz constant.
%\end{assumption}
%Note that an equivalent definition of $L$-smoothness is that
%\begin{align}\label{ineq:Lsmooth}
%	f_i(y)\leq f_i(x) + \nabla f_i(x)^T(y-x) + \frac{L}{2}\|y-x\|^2.
%\end{align}
\begin{assumption}[Convexity and $L$-smoothness]\label{asm-lips}
	Each local sample cost $f_{i,l}$ is convex and has Lipschitz continuous gradients, i.e.,
\begin{align}
    f_{i,l}(y)\geq & f_{i,l}(x) + \nabla f_{i,l}(x)^T(y-x), \label{ineq:convexity} \\
	f_{i,l}(y)\leq & f_{i,l}(x) + \nabla f_{i,l}(x)^T(y-x) + \frac{L}{2}\|y-x\|^2, \label{ineq:Lsmooth}
\end{align}
for all $x,y\in \mathbb{R}^{d}$ and $L>0$ is the Lipschitz constant.
\end{assumption}

Assumption \ref{asm-lips} implies that the local cost functions $f_i$ defined in \eqref{samplecost} and the global cost function $F$ defined in \eqref{c1} are also convex and $L$-smooth.
%\begin{align}
%\|\nabla f_i(x)-\nabla f_i(y)\| & \le L \|x-y\|, ~\forall  x,y\in \mathbb{R}^{d}. \\
%\|\nabla F(x)-\nabla F(y)\| & \le L \|x-y\|, ~\forall  x,y\in \mathbb{R}^{d}.
%\end{align}
Under Assumption \ref{asm-lips}, we have the following lemma, which will be used in the later analysis.
\begin{lemma}\label{lem:Lsmooth}
	Under Assumption \ref{asm-lips}, for all $x, y\in\mathbb{R}^d$, we have
	\begin{align}
	 & \frac{1}{2L}\left\|\nabla f_{i,l}(x)-\nabla f_{i,l}(y)\right\|^2 \\
\leq & f_{i,l}(x)- f_{i,l}(y) - \nabla f_{i,l}(y)^T(x-y). \notag
	\end{align}
\end{lemma}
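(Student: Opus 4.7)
The plan is to use the standard reduction: fix $y$, form the auxiliary function $g(z) := f_{i,l}(z) - \nabla f_{i,l}(y)^T z$, and observe that $y$ is a global minimizer of $g$, so that $g(y)$ can be upper-bounded by the minimum of the $L$-smooth quadratic majorant of $g$ centered at $x$.

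Concretely, first I would note that $g$ inherits convexity from $f_{i,l}$ and has the same Lipschitz gradient constant $L$ as $f_{i,l}$ (a linear term affects neither property), and that $\nabla g(z) = \nabla f_{i,l}(z) - \nabla f_{i,l}(y)$. In particular $\nabla g(y) = 0$, so by convexity $g$ attains its global minimum at $y$. Next I would invoke the descent lemma \eqref{ineq:Lsmooth} applied to $g$ at the point $x$: for every $z \in \mathbb{R}^d$,
\begin{equation*}
g(z) \le g(x) + \nabla g(x)^T(z-x) + \frac{L}{2}\|z-x\|^2.
\end{equation*}
Minimizing the right-hand side over $z$ yields the minimizer $z^\star = x - \tfrac{1}{L}\nabla g(x)$ and minimum value $g(x) - \tfrac{1}{2L}\|\nabla g(x)\|^2$. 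Combining with $g(y) \le g(z^\star)$ gives
\begin{equation*}
g(y) \le g(x) - \frac{1}{2L}\|\nabla f_{i,l}(x) - \nabla f_{i,l}(y)\|^2.
\end{equation*}

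Finally, I would substitute the definition of $g$ and rearrange, which yields exactly
\begin{equation*}
\frac{1}{2L}\|\nabla f_{i,l}(x) - \nabla f_{i,l}(y)\|^2 \le f_{i,l}(x) - f_{i,l}(y) - \nabla f_{i,l}(y)^T(x-y),
\end{equation*}
as required. There is essentially no hard step here; the only point requiring care is the justification that $\nabla g(y)=0$ together with convexity implies $y$ is a global minimizer (this is standard: convex stationary points are global minima, and here it follows directly from \eqref{ineq:convexity} applied to $g$). The lemma then requires nothing beyond Assumption~\ref{asm-lips}.
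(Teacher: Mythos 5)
Your proof is correct and follows essentially the same route as the paper: both form the auxiliary function obtained by subtracting the linearization of $f_{i,l}$ at $y$ (your $g$ differs from the paper's $\phi$ only by an additive constant), observe it is convex, $L$-smooth, and minimized at $y$, and then use the $L$-smoothness bound minimized along the gradient direction to get the $\frac{1}{2L}\|\nabla g(x)\|^2$ decrease. The only cosmetic difference is that you minimize the quadratic majorant over the point $z$ while the paper minimizes over the step size $\eta$, which yields the identical bound.
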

\begin{proof}
	We consider the function
	\begin{equation}
		\phi(x)=f_{i,l}(x) - f_{i,l}(y) - \nabla f_{i,l}(y)^T(x-y).
	\end{equation}
	It is easy to see that $\phi$ is convex and achieves the global minimal value $0$ at the point $y$ because $\nabla\phi(y)=0$. In addition, $\phi(x)$ is also $L$-smooth, such that
	\begin{equation}
		\begin{split}
		0&\leq\min_{\eta}\left\{\phi(x-\eta\nabla\phi(x))\right\}\\
		&\leq\min_{\eta}\left\{\phi(x)-\eta\|\nabla\phi(x)\|^2+\frac{L\eta^2}{2}\|\nabla\phi(x)\|^2\right\}\\
		&=\phi(x)-\frac{1}{2L}\|\nabla\phi(x)\|^2,
	\end{split}
	\end{equation}
By substituting the definition of $\phi$ into the above inequality, we complete the proof.
\end{proof}

Note that similar arguments also hold for the local cost functions $f_i$ and the global cost function $F$.

\begin{assumption}[$\mu$-strong convexity]\label{asm-strongly}
	The global cost function $F$ is strongly convex, i.e.,
	\begin{align}
	F(y)\geq F(x) + \nabla F(x)^T(y-x) + \frac{\mu}{2}\|y-x\|^2, \label{ineq:strongconvexity}
\end{align}
for all $x,y\in \mathbb{R}^{d}$ and $\mu>0$ is the strong convexity constant.
\end{assumption}

To reach the consensual and optimal solution, the nodes need to mix their local decision variables with those from their neighbors according to predefined weights. Let $w_{i j} \geq 0$ represent the weight that node $i$ assigns to node $j$ and define the mixing matrix $W=[w_{ij}]\in \mathbb{R}^{n\times n}$, which satisfies the following assumption.

\begin{assumption}[Mixing matrix]\label{asm-W}
	The mixing matrix $W$ is  nonnegative with $w_{ij}\ge 0$. The weight $w_{ij} = 0$ if and only if $j \notin $ $\mathcal{N}_{i}$. $W$ is symmetric and doubly stochastic, i.e., $W=W^T$ and $W1_n=1_n$. The null space of $I_n-W$ is $\operatorname{span}\left(1_n\right)$.
\end{assumption}

Mixing matrices satisfying Assumption \ref{asm-W} are common in the literature of decentralized learning over an undirected and connected network; see, e.g., \cite{boyd2004fastest, nedic2018network} for details. According to the Perron-Frobenius theorem \cite{pillai2005perron}, Assumption \ref{asm-W} implies that the eigenvalues of $W$ lie in $(-1,1]$ and the multiplicity of eigenvalue 1 is one. It also implies that the second largest singular value $\sigma$ of $W$ is less than 1, i.e.,
$$\sigma=\| W-\frac{1}{n}1_n 1_n^T\|_2<1.$$
%%%%%%%%%%%%%%%%%%%%%%%%%%%%%%%%%%%%%%%%%%%%%%%%%%%%%%%%%%%%%%%%%%%%%%%%%%%%%%%%%%%%%%%

\subsection{A General Framework of Variance-Reduced Decentralized Stochastic Quasi-Newton Methods}
We propose a general framework of decentralized stochastic quasi-Newton methods combined with variance reduction to solve \eqref{d2}.
In the proposed framework, node $i$ updates its local decision variable $x_i^{k+1}$ according to the following decentralized stochastic quasi-Newton step
\begin{equation}\label{qn-1}
x_i^{k+1}=\sum_{j=1}^{n} w_{ij}x_j^k-\alpha d^k_i,
\end{equation}
where $k$ is the time and $\alpha>0$ is a constant stepsize. If the local cost function $f_i$ is twice differentiable, one ideal choice of the direction $d_i^{k+1}$ is the global negative Newton direction $\left( \frac{1}{n} \sum_{i=1}^{n} \nabla^2 f_i(\ox^{k+1})\right)^{-1}\left(  \frac{1}{n} \sum_{i=1}^{n} \nabla f_i(\ox^{k+1})\right) = \left( \nabla^2 F(\ox^{k+1})\right)^{-1} \nabla F(\ox^{k+1})$, i.e., multiplication of the global Hessian inverse and the global gradient at the average variable $\ox^{k+1}=\frac{1}{n}\sum_{i=1}^{n}x_i^{k+1}$. However, computing the global negative Newton direction is expensive in the decentralized stochastic learning setting for two reasons. First, computing the global Hessian inverse and the global gradient is impossible, since each node only has access to the information from itself and its neighbors, instead of from the entire network. Second, even computing the local Hessian inverse and the local gradient is unaffordable  because they involve all the sample costs on each node.

In our proposed general framework, we update the direction $d_i^{k+1}$ with carefully constructed Hessian inverse approximation $H_i^{k+1}$ and gradient approximation $g_i^{k+1}$, given by
\begin{equation}\label{qn-2}
d_i^{k+1}=H_i^{k+1} g_i^{k+1}.
\end{equation}
Compared with the global negative Newton direction, the Hessian inverse approximation $H_i^{k+1}$ is to estimate the global Hessian inverse $\left(\frac{1}{n}\sum_{i=1}^{n}  \nabla^2 f_i(\ox^{k+1})\right)^{-1} = \left( \nabla^2 F(\ox^{k+1})\right)^{-1}$ and the gradient approximation $g_i^{k+1}$ is to estimate the global gradient $\frac{1}{n}\sum_{i=1}^{n} \nabla f_i(\ox^{k+1}) = \nabla F(\ox^{k+1})$. We will construct $g_i^{k+1}$ below and $H_i^{k+1}$ in Part II of this work.

For the gradient approximation $g_i^{k+1}$, we consider the case that the sample size $m_i$ is too large such that it is unaffordable to compute the local full gradient $\nabla f_i(x_i^{k+1})$. Thus, node $i$ uniformly randomly chooses a subset $S_i^{k+1} \subseteq \{1,\ldots,m_i\}$ with cardinality $|S_i^{k+1}|=b_i$, computes their stochastic gradients $\nabla f_{i,l}(x_i^{k+1})$, and obtains a corrected stochastic gradient $v_i^{k+1}$ with SVRG, as
\begin{align}\label{qn-3}
v_i^{k+1}=&\frac{1}{b_i}\sum_{l\in S^{k+1}_i}\Big(\nabla f_{i,l}(x_i^{k+1})-\nabla f_{i,l}(\tau_i^{k+1})\Big) \\
& +\nabla f_{i}(\tau_i^{k+1}), \nonumber
\end{align}
where $\tau_i^{k+1}\in\mathbb{R}^d$ is an auxiliary variable. Given a positive integer $T$, $\tau_i^{k+1}=x_i^{k+1}$ if \hspace{-1em} $\mod(k+1,T)=0$ and $\tau_i^{k+1}=\tau_i^{k}$ otherwise. Therefore, node $i$ calculates its local full gradient once at every $T$ times, and saves it to correct the consequent $T$ local stochastic gradients.  With SVRG,  $v_i^{k+1}$ is a reliable, unbiased estimate to the local full gradient $\nabla f_i(x_i^{k+1})$. However, we expect $g_i^{k+1}$ to estimate the global gradient $\frac{1}{n} \sum_{i=1}^{n}    \nabla f_i(\ox^{k+1})$. Inspired by the gradient tracking strategy \cite{zhu2010discrete}, we construct $g_i^{k+1}$ with a dynamic average consensus step, given by
\begin{equation}\label{qn-4}
g_i^{k+1}=\sum_{j=1}^{n} w_{ij}g_j^{k}+v_i^{k+1}-v_i^k,
\end{equation}
with initialization $g_i^0=v_i^0=\nabla f_i(x_i^0)$.

Intuitively, the local corrected stochastic gradient $v_i^{k+1}$ will gradually approach the local full gradient $\nabla f_i(x_i^{k+1})$ with the help of SVRG. If the local decision variables $x_i^{k+1}$ are almost consensual, then the gradient approximations $g_i^{k+1}$ will gradually approach the global gradient $\frac{1}{n} \sum_{i=1}^{n}  \nabla f_i(\ox^{k+1})$ with the help of dynamic average consensus. With the gradient approximations $g_i^{k+1}$ at the hand of each node $i$, in Part II, we will introduce how to obtain the Hessian inverse approximation $H_i^{k+1}$ that estimates the global Hessian inverse $\left( \frac{1}{n} \sum_{i=1}^{n}  \nabla^2 f_i(\ox^{k+1})\right)^{-1}$. As we will show in Part II, the Hessian inverse approximations $H_i^k$ is constructed locally given $g_i^k$ and $x_i^k$, without extra sample or communication. Just as the first-order gradient tracking methods, the proposed second-order methods require two rounds of communication of  $d$-dimension vectors at each iteration.

%This observation motives us to use the local $g_i^k$ and $x_i^k$ to construct the local estimator $H_i^k$ of Hessian inverse, which will gradually agree on the global Newton direction.
%The specific algorithms for constructing $H_i$ by local $g_i^k$ and $x_i^k$ will be given in the Section \ref{sec-dfp}.

\floatname{algorithm}{Algorithm}
\begin{algorithm}[htbp]
	\caption{Variance-reduced decentralized stochastic quasi-Newton methods on node $i$} \label{alg-framework}
	\begin{algorithmic}[1]
		\Require $\alpha$; $T$; $b_i$; $x_i^0$; $d_i^0$; $\tau_i^0=x_i^0$;  $g_i^0=v_i^0=\nabla f_i(x_i^0)$.
		\For {$k =0,1,2,\ldots$}
		\State  Update local decision variable $x_i^{k+1}$ as in \eqref{qn-1}.
		%\setlength\abovedisplayskip{2pt}
		%\setlength\belowdisplayskip{2pt}
		%$$ x_i^{k+1}=\sum_{j=1}^{n} w_{ij}x_j^k-\alpha d^k_i.$$
		\State Select $S_i^{k+1} \subseteq \{1,\ldots,m_i\}$ with batch size $b_i$.
		\State $\tau_i^{k+1}=\tau_i^{k}$, or $\tau_i^{k+1}=x_i^{k+1}$ if \hspace{-1em} $\mod(k+1,T)=0$.
		\State Update corrected stochastic gradient $v_i^{k+1}$ as in \eqref{qn-3}.
		%$$v_i^{k+1}=\frac{1}{b_i}\sum_{l\in S^{k+1}_i}\Big(\nabla f_{i,l}(x_i^{k+1})-\nabla f_{i,l}(\tau_i^{k+1})\Big)+\nabla f_{i}(\tau_i^{k+1}).$$
		\State Update gradient approximation $g_i^{k+1}$ as in \eqref{qn-4}.
		%$$g_i^{k+1}=\sum_{j=1}^{n} w_{ij}g_j^{k}+v_i^{k+1}-v_i^k.$$
		\State Construct Hessian inverse approximation $H_i^{k+1}$. %(See Algorithm \ref{alg-DFP}).
		\State Update direction $d_i^{k+1}$ as in \eqref{qn-2}.
		%$$d_i^{k+1}=H_i^{k+1} g_i^{k+1}.$$
		\EndFor
	\end{algorithmic}
\end{algorithm}

The proposed variance-reduced decentralized stochastic quasi-Newton methods are described in Algorithm \ref{alg-framework}. The general framework can be written in a compact form of
\begin{align}\label{eq:alg}
\begin{aligned}
\mathbf{x}^{k+1}&=\mathbf{W}\mathbf{x}^{k}-\alpha \mathbf{d}^{k},\\
\mathbf{g}^{k+1}&=\mathbf{W}\mathbf{g}^{k}+\mathbf{v}^{k+1}-\mathbf{v}^{k},\\
\mathbf{d}^{k+1}&=\bH^{k+1}\mathbf{g}^{k+1}.\\
\end{aligned}
\end{align}

%%%%%%%%%%%%%%%%%%%%%%%%%%%%%%%%%%%%%%%%%%%%%%%%%%%%%%%%%%%%%%%%%%%%%%%%%%%%%%%%%%%%%%%%%%%%%%%%%%%
\section{Convergence Analysis}
This section establishes the linear convergence rate of the general framework in Algorithm \ref{alg-framework}, given that the Hessian inverse approximations $H_i^{k}$ satisfy the following assumption.
\begin{assumption}[Bounded Hessian inverse approximations]\label{asm-Hbound}
	There exist two constants $M_1$ and $M_2$ with $0< M_1\leq M_2<\infty$ such that
	\begin{equation}\label{b1}
	M_1I_d\preceq H^{k}_i\preceq M_2I_d, \; \forall i=1,\ldots,n,  \; \forall k\ge 0.
	\end{equation}
\end{assumption}

We will not describe how to construct $H_k^i$ until in Part II, where specific updating schemes for $H_i^k$ satisfying Assumption \ref{asm-Hbound} will be proposed. 

\subsection{Preliminaries}
We start the convergence analysis of the general framework with several preliminaries.

First of all, we have the following ``averaging'' property of the mixing step
\begin{align}\label{b11}\nonumber
\|\bW\bx^k-\bW_{\infty}\bx^k\|=&\left\|\left((W-\frac{1}{n}1_n1_n^T)\otimes I_d\right)(\bx^k-\bW_{\infty}\bx^k)\right\|\\
\le& \sigma\|\bx^k-\bW_{\infty}\bx^k\|.
\end{align}
By Assumption \ref{asm-W}, we know $0\le \sigma<1$. Thus, \eqref{b11} implies that $\bW \bx^k$ is closer to the average $\bW_{\infty}\bx^k$ than the unmixed $\bx^k$. This ``averaging'' property will be frequently used over $\bx^k$ and other variables in the analysis.

We recall that in \eqref{eq:alg} the gradient approximation $\bg^k$ is updated by dynamic average consensus \cite{zhu2010discrete}. Under the initialization $\bg^0=\bv^0=\nabla f(\bx^0)$, taking average over all the nodes and using induction \cite{qu2017harnessing}, we have
\begin{equation}\label{b12}
\og^k=\ov^k, ~\forall k.
\end{equation}
It implies that each $g_i^k$ approximately tracks the average of gradient estimators $v_i^k$ when all $g_i^k$ are almost consensual.

To handle the randomness caused by sampling, we
denote $\mathcal{F}^k$ as the history of the dynamical system generated
by $\bigcup_{i=\{1,\cdots,n\}}^{t\le k-1} S_i^t$. For each node $i$, the stochastic vector $v_i^k$ is an  unbiased estimator of local gradient $\nabla f_i(x_i^k)$ conditioned in $\mathcal{F}^k$. Thus we have
\begin{equation}\label{b13}
\mathbb{E}\left[v_i^{k} | \mathcal{F}^{k}\right]=\nabla f_i\left(x_i^{k}\right).
\end{equation}
Further, by \eqref{b12} and \eqref{b13} we have
\begin{equation}\label{b14}
\mathbb{E}\left[\og^{k} | \mathcal{F}^{k}\right]=\mathbb{E}\left[\ov^{k} | \mathcal{F}^{k}\right]=\overline{\nabla f}\left(\mathbf{x}^{k}\right).
\end{equation}
Under Assumption \ref{asm-lips}, we have
\begin{equation}\label{b15}
\left\|\overline{\nabla f}\left(\mathbf{x}^{k}\right)-\nabla F\left(\overline{\mathbf{x}}^{k}\right)\right\| \leq \frac{L}{\sqrt{n}}\left\|\mathbf{x}^{k}-\bW_{\infty} \mathbf{x}^{k}\right\|,~\forall k.
\end{equation}
The proof can be found in Lemma 8 of \cite{qu2017harnessing}.

\subsection{Main Theorem}
Motivated by the analysis in \cite{qu2017harnessing,xin2020variance}, we will use the consensus error $\mE[\|\bx^k-\bW_{\infty} \bx^k\|^2]$, network optimality gap $\mE[F(\ox^k)-F(x^*)]$ and the gradient tracking error $\mE[\|\bg^k-\bW_{\infty}\bg^k\|^2]$ to establish the convergence rate. Collect all the three errors mentioned above into a vector $\bu^k\in \mathbb{R}^3$  such that
\begin{equation*}
\bu^k=\begin{bmatrix}
\mE[ \|\bx^k-\bW_{\infty} \bx^k\|^2 ]\\ \frac{2n}{L}\mE\left[F(\ox^{k})-F(x^*)\right]\\\frac{1-\sigma^2}{L^2}\mE\left[\|\bg^{k}-\bW_{\infty}\bg^k\|^2\right]
\end{bmatrix}.
\end{equation*}
Thus, $\bu^k$ is a distance measure between $x_i^k$ and $x^*$ since $\bu^k=0$ implies $x_i^k=\ox^k=x^*, \forall i$.
Define
\begin{equation}\label{p18}
 B=\max_{i\in\{1,\ldots,n\}}\left\{\frac{m_i-b_i}{(m_i-1)b_i}\right\}<1.
\end{equation}
We call $B$ the non-sampling rate, since $B=0$ means that each node $i$ uses all $m_i$ samples to compute the local full gradient $\nabla f_i$, degenerating to the deterministic setting.

We first summarize the conditions of the parameters, including the step size $\alpha$, the non-sampling rate $B$ and the period $T$ of SVRG, to guarantee the linear convergence rate, in the following theorem.
\begin{theorem}\label{thom-parameter}
	Under Assumptions \ref{asm-lips}--\ref{asm-Hbound}, if the parameters satisfy
	\begin{align}\label{ineq:stepsize}\nonumber
			\alpha&\leq\frac{(1-\sigma^2)^2\mu M_1}{200L^2M_2^2},\\
				B&\le \frac{1}{160}\min\left\{1, \frac{\zeta(1-\sigma^2)^2}{\gamma^2}\right\},\\\nonumber
				T&\ge\frac{2\log(280/(\zeta(1-\sigma^2)^2))}{\zeta \ta },
	\end{align}
where
\begin{equation}
	\zeta = \left(\frac{\mu}{L}\right)^2\left(\frac{M_1}{M_2}\right)^2,\quad
	\gamma=1-\frac{M_1}{M_2},\quad
	\ta=\frac{M_2^2L^2}{M_1\mu}\alpha.
\end{equation}
	Then, the proposed Algorithm \ref{alg-framework} converges linearly to the optimal solution of \eqref{d2}.
\end{theorem}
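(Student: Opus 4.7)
My plan is to follow the small-gain / linear-matrix-inequality strategy used for gradient tracking (as in the cited works of Qu--Li and Xin--Kar), but extended to handle both the SVRG noise and the quasi-Newton preconditioners $H_i^k$. The goal is to derive a recursion of the form $\bu^{k+1} \preceq A\bu^k + c\cdot\text{SVRG-noise}^k$ for a suitable nonnegative matrix $A\in\mathbb{R}^{3\times 3}$, and then to show $\rho(A)<1$ under the stated conditions on $\alpha$, $B$, $T$. The conditions on these parameters are calibrated precisely so that $A$ is contractive with a margin that absorbs the SVRG variance.

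First I would derive three scalar recursions, one for each entry of $\bu^k$. For the consensus error, starting from $\bx^{k+1}-\bW_{\infty}\bx^{k+1}=(\bW-\bW_{\infty})(\bx^k-\bW_{\infty}\bx^k)-\alpha(\bd^k-\bW_{\infty}\bd^k)$, using the averaging inequality \eqref{b11} together with $\|\bd^k\|\le M_2\|\bg^k\|$, I obtain a contraction in the first coordinate plus cross terms in the third. For the gradient-tracking error, I would use the dynamic average consensus update $\bg^{k+1}=\bW\bg^k+\bv^{k+1}-\bv^k$ together with the averaging property; the key substep is to bound $\mE[\|\bv^{k+1}-\bv^k\|^2\mid\mathcal{F}^k]$ in terms of $\|\bx^{k+1}-\bx^k\|^2$ and the SVRG noise. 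Here Lemma \ref{lem:Lsmooth} and the unbiasedness \eqref{b13} let me replace $\bv^k$ by $\nabla f(\bx^k)$ up to controllable variance. For the optimality gap, I would exploit $\ox^{k+1}=\ox^k-\alpha\od^k$ and rewrite $\od^k=\oH^k\og^k+(\od^k-\oH^k\og^k)$. Since the second term measures the disagreement of $H_i^k$ across nodes, it is bounded by $\gamma M_2\|\bg^k-\bW_{\infty}\bg^k\|/\sqrt{n}$ where $\gamma=1-M_1/M_2$; then an $L$-smoothness descent lemma on $F$, combined with $\mu$-strong convexity and \eqref{b14}--\eqref{b15}, yields a recursion with $(1-\Theta(\alpha\mu M_1))$ contraction on the optimality gap and cross-terms bounded by the consensus and tracking errors.

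Second, I would control the SVRG variance via a Lyapunov-style auxiliary term $\mE[\|\bx^k-\btau^k\|^2]$. Using the restart rule $\tau_i^{k+1}=x_i^{k+1}$ when $\mod(k+1,T)=0$, this quantity is $0$ at each restart and grows at the rate of $\|\bx^{k+1}-\bx^k\|^2$ between restarts. A standard bound then gives $\mE[\|\bv^k-\nabla f(\bx^k)\|^2]\le 2LB\cdot\mE[f(\bx^k)-f(\btau^k)-\nabla f(\btau^k)^T(\bx^k-\btau^k)]$, which after summing over a window of length $T$ is dominated by $B$ times the optimality gap plus lower-order terms. This is exactly why the bound $B\le\tfrac{1}{160}\min\{1,\zeta(1-\sigma^2)^2/\gamma^2\}$ appears: it forces the variance contribution into the strictly-contracting regime of $A$.

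Third, I would assemble the three scalar bounds into the matrix inequality $\mE[\bu^{k+1}]\preceq A\mE[\bu^k]+\beta_k$, where $\beta_k$ is the SVRG residual. The hard part is verifying $\rho(A)<1$ with the explicit constants in \eqref{ineq:stepsize}: I would use the weighted infinity-norm trick, picking a positive vector $z\in\mathbb{R}^3$ (balancing the scales $1,\ 2n/L,\ (1-\sigma^2)/L^2$) and showing $\|A\|_{\infty}^{z}\le 1-\Theta(\alpha\mu M_1)$. The stepsize bound $\alpha\le\tfrac{(1-\sigma^2)^2\mu M_1}{200L^2M_2^2}$ is what makes the off-diagonal cross-terms (which carry factors of $\alpha LM_2$ and $\alpha L M_2/(1-\sigma^2)$) small enough to preserve the diagonal contractions. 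Finally, averaging the recursion over one SVRG period of length $T$ and using the lower bound on $T$, the accumulated SVRG residual is absorbed into a geometric contraction with rate $1-\Theta(\zeta\ta)$, yielding linear convergence of $\mE[\bu^k]\to 0$, and hence of $x_i^k\to x^*$ for every $i$. The main obstacle I anticipate is the bookkeeping of the quasi-Newton disagreement term $\od^k-\oH^k\og^k$, which does not appear in first-order analyses and couples the consensus error to $\gamma=1-M_1/M_2$; tracking this factor cleanly through all three recursions is what dictates the specific form of the conditions on $B$ and $T$.
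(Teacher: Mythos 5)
Your proposal follows essentially the same route as the paper's proof: the same three-component error vector (consensus error, optimality gap, gradient-tracking error), the same treatment of the quasi-Newton disagreement through the factor $\gamma=1-M_1/M_2$, assembly into a $3\times 3$ nonnegative matrix recursion, a weighted infinity-norm bound on its spectral radius, and absorption of the SVRG snapshot terms over an epoch of length $T$ to obtain a geometric contraction. The only minor difference is the bookkeeping of the SVRG variance (you use the Bregman gap between $\bx^k$ and $\btau^k$, while the paper splits through $\ox^k$, $x^*$ and $\otau^k$ so that the variance is bounded directly by entries of the error vector and its snapshot counterpart), but this leads to the same structure and the same role for the conditions on $\alpha$, $B$ and $T$.
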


Theorem \ref{thom-parameter} implies that if the step size $\alpha$ and the non-sampling rate $B$ are small enough and the period $T$ of SVRG is sufficiently large, then the proposed Algorithm \ref{alg-framework} converges at a linear rate to the optimum. Substituting $$\alpha=\mathcal{O}\left(\frac{(1-\sigma^2)^2\mu M_1}{L^2M_2^2}\right),$$ we can see that $$T=\mathcal{O}\left(\frac{\kappa_F^2\kappa_H^2\log\frac{\kappa_F\kappa_H}{1-\sigma^2}}{(1-\sigma^2)^2}\right),$$ where $\kappa_F=L/\mu$ denotes the condition number of  global cost function $F$, $\kappa_H=M_2/M_1$ denote the condition number of Hessian inverse approximations, and $1-\sigma^2$ represents the connectedness of the network. Therefore, to achieve an $\epsilon$-optimal solution, the total number of stochastic gradient evaluations required by Algorithm \ref{alg-framework} is
\begin{equation*}
	\mathcal{O}\left(\left(\max_i\{m_i\}+\frac{\max_i\{b_i\}\cdot\kappa_F^2\kappa_H^2\log\frac{\kappa_F\kappa_H}{1-\sigma^2}}{(1-\sigma^2)^2}\right)\log\frac{1}{\epsilon}\right).
\end{equation*}

\begin{remark}
When $m_i=m$ and $H_i^k=I_d$, we have $\gamma=0$, $b_i=\mathcal{O}(1)$ and $\kappa_H=1$, such that the number of stochastic gradient evaluations of Algorithm \ref{alg-framework} is $$	\mathcal{O}\left(\left(m+\frac{\kappa_F^2\log\frac{\kappa_F}{1-\sigma^2}}{(1-\sigma^2)^2}\right)\log\frac{1}{\epsilon}\right),$$ which is similar to $$\mathcal{O}\left(\left(m+\frac{\kappa_F^{2} \log \kappa_F}{\left(1-\sigma^{2}\right)^{2}}\right) \log \frac{1}{\epsilon}\right),$$ given in \cite{xin2020variance}. Our analysis cannot show better dependence on $\kappa_F$, because we only  assume that $H_i^k$ have bounded positive eigenvalues in the general framework, which includes the worst case, for example, when $H_i^k$ fails to involve any curvature information. The work of \cite{berglund2021distributed} studies deterministic, quadratic cost functions and shows that communicating second-order information helps obtain better dependence on the condition number of cost functions. Our work investigates stochastic, general cost functions and does not communicate Hessians.
%However, for arbitrary $H_i^k$ satisfying $M_1 I_d\preceq H_i^k \preceq M_2 I_d$ and arbitrary batch sizes $b_i$, our complexity is higher than that of \cite{xin2020variance}. The reasons are two-fold. First,  we only  assume that $H_i^k$ have bounded positive eigenvalues in the general framework, which includes the worst case, for example, when $H_i^k$ fails to involve any curvature information.
%%The introduction of Hessian inverse approximations $H_i^k$ makes the analysis different from that of \cite{xin2020variance}.
%The work of \cite{berglund2021distributed} studies deterministic quadratic cost functions and shows that communicating second-order information helps obtain better dependence on the condition number of cost functions.
%condition number dependence. Our work studies the general objective and the proposed algorithms do not communicate Hessians.
Besides, it is recommended to set the batch sizes $b_i=1$ for the variance-reduced stochastic first-order method in \cite{xin2020variance}, since smaller batch sizes reduce the number of stochastic gradient evaluations per iteration, although increasing the number of iterations. For the proposed variance-reduced stochastic second-order methods, the above analysis also suggests to use moderate batch sizes. However, numerical experiments in Part II will show that slightly larger batch sizes are beneficial. Our conjecture is that using larger batch sizes enables constructing stabler gradient and Hessian estimators, and hence helps convergence.
\end{remark}

\subsection{Linear Convergence Rate}
Next, we will specify the linear rate given the parameters in Theorem \ref{thom-parameter}. The analysis includes four steps, where the consensus error $\mE[\|\bx^k-\bW_{\infty} \bx^k\|^2]$, the network optimality gap $\mE[F(\ox^k)-F(x^*)]$ and the gradient tracking error $\mE[\|\bg^k-\bW_{\infty}\bg^k\|^2]$ are bounded in Steps I, II and III, respectively. In Step IV, we reorganize the previous three bounds in a compact form and choose the parameters to establish the linear rate.

\subsubsection{\bf Step I}
The following lemma gives a recursion of the consensus error $\mE[\|\bx^k-\bW_{\infty} \bx^k\|^2]$.
\begin{lemma}\label{lem-x}Under Assumptions \ref{asm-lips}--\ref{asm-Hbound}, consider the sequence $\{\bx^k\}$ generated by \eqref{eq:alg}. For all $k\geq 1$, we have
\begin{align}\label{a11}
&\mE\left[\|\bx^{k+1}-\bW_{\infty}\bx^{k+1}\|^2\right]\\\nonumber
\leq& \left(\frac{1+\sigma^2}{2}+\frac{2\alpha^2\gamma^2 M_2^2L^2}{1-\sigma^2}\right)\mE\left[\|\bx^{k}-\bW_{\infty}\bx^{k}\|^2\right]
\\\nonumber
&+\frac{2\alpha^2M_2^2}{1-\sigma^2}\Big(2\mE\left[\|\bg^k-\bW_{\infty}\bg^k\|^2\right]+\frac{\gamma^2}{n}\mE\left[\|\bv^k-\nabla f(\bx^k)\|^2\right]\\\nonumber
&+2\gamma^2Ln\mE\left[F(\ox^k)-F(x^*)\right]\Big).
\end{align}
\end{lemma}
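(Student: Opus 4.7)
The starting point is the first line of \eqref{eq:alg}, $\bx^{k+1} = \bW\bx^k - \alpha\bd^k$. Since $W$ is doubly stochastic, $\bW_{\infty}\bW = \bW_{\infty}$, hence
\begin{equation*}
\bx^{k+1} - \bW_{\infty}\bx^{k+1} = (\bW - \bW_{\infty})(\bx^k - \bW_{\infty}\bx^k) - \alpha(I - \bW_{\infty})\bd^k.
\end{equation*}
I then apply Young's inequality $\|a+b\|^2 \le (1+c)\|a\|^2 + (1+1/c)\|b\|^2$ with $c = (1-\sigma^2)/(2\sigma^2)$, together with the averaging property \eqref{b11}, which produces the coefficient $(1+c)\sigma^2 = (1+\sigma^2)/2$ on $\|\bx^k-\bW_{\infty}\bx^k\|^2$ and $1+1/c = (1+\sigma^2)/(1-\sigma^2) \le 2/(1-\sigma^2)$ on $\alpha^2\|(I-\bW_{\infty})\bd^k\|^2$. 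Taking expectations reduces the lemma to bounding $\mE[\|(I-\bW_{\infty})\bd^k\|^2]$ in terms of the three tracked quantities on the right-hand side.

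The core of the argument is a decomposition of $\bd^k = \bH^k\bg^k$ that isolates the ``quasi-Newton mismatch.'' I split
\begin{equation*}
(I - \bW_{\infty})\bd^k = (I - \bW_{\infty})\bH^k(\bg^k - \bW_{\infty}\bg^k) + (I - \bW_{\infty})\bH^k\bW_{\infty}\bg^k
\end{equation*}
and use Young's inequality. The first summand has squared norm at most $M_2^2\|\bg^k - \bW_{\infty}\bg^k\|^2$ by Assumption \ref{asm-Hbound} and the non-expansiveness of $I-\bW_{\infty}$, which is exactly where the gradient-tracking error appears. For the second summand, the $i$-th block of $\bH^k\bW_{\infty}\bg^k$ equals $H_i^k\og^k$, so the $i$-th block of $(I-\bW_{\infty})\bH^k\bW_{\infty}\bg^k$ equals $(H_i^k - \oH^k)\og^k$. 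Because the eigenvalues of $H_i^k$ and $\oH^k$ both lie in $[M_1,M_2]$, I have $\|H_i^k - \oH^k\|_2 \le M_2 - M_1 = \gamma M_2$, which gives a squared-norm bound of $n\gamma^2 M_2^2\|\og^k\|^2$ and is the source of the factor $\gamma^2$ in the statement.

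It remains to control $\mE[\|\og^k\|^2]$. Using $\og^k = \ov^k$ from \eqref{b12}, the conditional unbiasedness \eqref{b13}--\eqref{b14}, and the fact that the mini-batches $S_i^k$ are drawn independently across nodes, the conditional variance decomposition yields
\begin{equation*}
\mE[\|\ov^k\|^2\mid \mathcal{F}^k] = \|\overline{\nabla f}(\bx^k)\|^2 + \tfrac{1}{n^2}\sum_{i=1}^{n}\mE[\|v_i^k - \nabla f_i(x_i^k)\|^2\mid \mathcal{F}^k],
\end{equation*}
which is where the favorable $1/n$ scaling on the SVRG noise originates. Then $\|\overline{\nabla f}(\bx^k)\|^2$ is handled by Young's inequality plus \eqref{b15} (for the disagreement part) and by applying the analogue of Lemma \ref{lem:Lsmooth} at the level of $F$ with $y=x^*$, which gives $\|\nabla F(\ox^k)\|^2 \le 2L(F(\ox^k)-F(x^*))$ since $\nabla F(x^*) = 0$. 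Chaining these bounds produces the three terms inside the parenthesis as well as the extra $\|\bx^k-\bW_{\infty}\bx^k\|^2$ contribution that gets absorbed into the first coefficient.

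The main obstacle is the careful bookkeeping of the randomness: $\bg^k$ and $\bH^k$ are not $\mathcal{F}^k$-measurable (they depend on $S_i^k$), so the conditional variance decomposition must be invoked at the level of $\og^k = \ov^k$ in order to both (i) retain the $1/n^2$ factor that matches the $\gamma^2/n$ coefficient in the statement, and (ii) avoid any dependence on $\bH^k$ inside the noise term. The rest is choosing Young's constants so that the coefficients aggregate to the precise form displayed in \eqref{a11}.
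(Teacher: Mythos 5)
Your overall architecture matches the paper's: same expansion $\bx^{k+1}-\bW_{\infty}\bx^{k+1}=\bW(\bx^k-\bW_{\infty}\bx^k)-\alpha(I-\bW_{\infty})\bd^k$, same Young constant $\frac{1-\sigma^2}{2\sigma^2}$ giving $\frac{1+\sigma^2}{2}$ and $\frac{2\alpha^2}{1-\sigma^2}$, and the same treatment of the noise via $\og^k=\ov^k$, conditional unbiasedness, cross-node independence, \eqref{b15}, and $\|\nabla F(\ox^k)\|^2\le 2L(F(\ox^k)-F(x^*))$. The genuine gap is in your decomposition of $(I-\bW_{\infty})\bd^k$ and your closing claim that Young's constants can be tuned to ``aggregate to the precise form'' of \eqref{a11}: they cannot. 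Splitting $\bg^k$ into $(\bg^k-\bW_{\infty}\bg^k)+\bW_{\infty}\bg^k$ forces you to bound $\|H_i^k-\oH^k\|_2$, and the best available bound under Assumption \ref{asm-Hbound} is $M_2-M_1=\gamma M_2$ (take $H_1^k=M_2I_d$ and all others $M_1I_d$), not $\frac{\gamma M_2}{2}$. With the coefficient on $\mE[\|\bg^k-\bW_{\infty}\bg^k\|^2]$ pinned at $2M_2^2$ by the target, Young's inequality then leaves you with at least $2n\gamma^2M_2^2\mE[\|\og^k\|^2]$, and after your (correct) variance decomposition this yields $\frac{2\gamma^2 M_2^2}{n}\mE[\|\bv^k-\nabla f(\bx^k)\|^2]$, $4\gamma^2M_2^2L^2\mE[\|\bx^k-\bW_{\infty}\bx^k\|^2]$, and $8\gamma^2M_2^2Ln\,\mE[F(\ox^k)-F(x^*)]$ --- larger than the stated coefficients by factors $2$, $4$, and $4$ respectively. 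So your argument proves an inequality of the same shape but strictly weaker than \eqref{a11}, and since these constants are propagated verbatim into $J_\alpha$, $H_\alpha$ and the parameter conditions of Theorem \ref{thom-parameter}, the discrepancy is not cosmetic for this paper.

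The missing idea is the paper's centering of the Hessian approximations at $\bar M I_{nd}$ with $\bar M=\frac{M_1+M_2}{2}$: write $(I-\bW_{\infty})\bd^k=(I-\bW_{\infty})(\bH^k-\bar M I_{nd})\bg^k+\bar M(I-\bW_{\infty})\bg^k$, so the deviation factor is $\|\bH^k-\bar M I_{nd}\|_2\le\frac{M_2-M_1}{2}=\frac{\gamma M_2}{2}$, i.e.\ half of what your block-wise $H_i^k-\oH^k$ estimate gives; only then does the subsequent split $\|\bg^k\|\le\|\bg^k-\bW_{\infty}\bg^k\|+\sqrt{n}(\|\og^k-\nabla F(\ox^k)\|+\|\nabla F(\ox^k)\|)$, squared with two applications of Cauchy--Schwarz, reproduce exactly $2M_2^2\|\bg^k-\bW_{\infty}\bg^k\|^2+M_2^2\gamma^2 n\left(\|\og^k-\nabla F(\ox^k)\|^2+\|\nabla F(\ox^k)\|^2\right)$ and hence the constants in \eqref{a11}. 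If you replace your $H_i^k-\oH^k$ step with this $\bar M$-centering (keeping the rest of your argument, which is otherwise sound, including the measurability bookkeeping), the proof closes as stated.
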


\begin{proof}
According to the update of $\bx^{k+1}$ in \eqref{eq:alg}, we have
\begin{align}\label{a12}
 &\|\bx^{k+1}-\bW_{\infty}\bx^{k+1}\|^2\\\nonumber
=&\|\bW \bx^k-\alpha \bd^k-\bW_{\infty}\bx^{k}+\alpha \bW_{\infty}\bd^k\|^2\\\nonumber
=&\|\bW (\bx^k-\bW_{\infty}\bx^k)-\alpha (I_{nd}-\bW_{\infty})\bd^{k}\|^2\\\nonumber
\le&(1+\eta)\|\bW (\bx^k-\bW_{\infty}\bx^k)\|^2+(1+\frac{1}{\eta})\alpha^2\|(I_{nd}-\bW_{\infty})\bd^{k}\|^2\\\nonumber
\le&\frac{1+\sigma^2}{2\sigma^2}\|\bW (\bx^k-\bW_{\infty}\bx^k)\|^2+\frac{(1+\sigma^2)\alpha^2}{1-\sigma^2}\|(I_{nd}-\bW_{\infty})\bd^{k}\|^2\\\nonumber
\le&\frac{1+\sigma^2}{2}\|\bx^k-\bW_{\infty}\bx^k\|^2+\frac{2\alpha^2}{1-\sigma^2}
\|(I_{nd}-\bW_{\infty})\bd^{k}\|^2,
\end{align}
where we use $\bW_{\infty}=\bW \bW_{\infty}$ in the first and the second equalities, use Young's inequality with parameter $\eta>0$ in the first inequality, and set $\eta=\frac{1-\sigma^2}{2\sigma^2}$ in the second inequality. For the last inequality, we use  $\|\bW(\bx^k-\bW_{\infty}\bx^k)\|^2=\|\bW\bx^k-\bW_{\infty}\bx^k\|^2\le \sigma^2\|\bx^k-\bW_{\infty}\bx^k\|^2$ obtained by \eqref{b11} and the fact of $\sigma< 1$ by Assumption \ref{asm-W}.

Next, we bound the term $\|(I_{nd}-\bW_{\infty})\bd^{k}\|$ in \eqref{a12}. By the triangle inequality, we have
\begin{align}
&\|(I_{nd}-\bW_{\infty})\bd^{k}\|\\
=&\|(I_{nd}-\bW_{\infty})(\bH^k-\bar{M}I_{nd})\bg^{k}+\bar{M}(I_{nd}-\bW_{\infty})\bg^{k}\|\nonumber\\
\le&\frac{M_2-M_1}{2}\|\bg^k\|+\bar{M}\|\bg^k-\bW_{\infty}\bg^k\|,\nonumber
\end{align}
where $\bar{M}=\frac{M_1+M_2}{2}$. The inequality holds because $\|I_{nd}-\bW_{\infty}\|_2\leq 1$ and $M_1I_{nd}\preceq\bH^k\preceq M_2I_{nd}$. Further, we have
\begin{align}
\|\bg^k\|\le&\|\bg^k-\bW_{\infty}\bg^k\|+\sqrt{n}\|\og^k\|\\\nonumber
\le&\|\bg^k-\bW_{\infty}\bg^k\|+\sqrt{n}(\|\og^k-\nabla F(\ox^k)\|+\|\nabla F(\ox^k)\|),
\end{align}
which implies
\begin{align*}
	&\|(I_{nd}-\bW_{\infty})\bd^{k}\| \le M_2\|\bg^k-\bW_{\infty}\bg^k\| \\
	&\hspace{4em}+\frac{M_2\gamma}{2}\sqrt{n}(\|\og^k-\nabla F(\ox^k)\|+\|\nabla F(\ox^k)\|), \nonumber
\end{align*}
where $\gamma=1-M_1/M_2$. By taking square on both sides and applying Cauchy-Schwarz inequality twice on the above inequality, we get
\begin{align}\label{a13}
	&\|(I_{nd}-\bW_{\infty})\bd^{k}\|^2\\\nonumber
	\le&2M_2^2\|\bg^k-\bW_{\infty}\bg^k\|^2\\\nonumber
	&+M_2^2\gamma^2n(\|\og^k-\nabla F(\ox^k)\|^2+\|\nabla F(\ox^k)\|^2).\nonumber
\end{align}

For the sake of handling the second term in \eqref{a13}, we expand $\mE\left[\|\og^k-\nabla F(\ox^k)\|^2|\mathcal{F}^k\right]$ as
\begin{align}\label{a25}
	&\mE\left[\|\og^k-\nabla F(\ox^k)\|^2|\mathcal{F}^k\right]\\\nonumber
	=&\mE\left[\|\ov^k-\overline{\nabla f}(\bx^k)+\overline{\nabla f}(\bx^k)-\nabla F(\ox^k)\|^2|\mathcal{F}^k\right]\\\nonumber
	=&\mE\left[\|\ov^k-\overline{\nabla f}(\bx^k)\|^2+\|\overline{\nabla f}(\bx^k)-\nabla F(\ox^k)\|^2|\mathcal{F}^k\right]\\\nonumber
	\le& \mE\left[\frac{1}{n^2}\|\bv^k-\nabla f(\bx^k)\|^2+\frac{L^2}{n}\|\bx^k-\bW_{\infty}\bx^k\|^2 |\mathcal{F}^k\right],
\end{align}
where we use $\og^k=\ov^k$ obtained by \eqref{b12} in the first equality and
$\mathbb{E} \left[ \left\langle\overline{\nabla f}\left(\bx^{k}\right)-\nabla F\left(\overline{\mathbf{x}}^{k}\right), \ov^k-\overline{\nabla f}(\bx^k) \right\rangle | \mathcal{F}^{k}\right]=0$
by \eqref{b14} in the second equality. For the inequality, in addition to \eqref{b15}, we use the fact that $v_i^k-\nabla f_i(x_i^k)$ is independent for each node $i$ given the history $\mathcal{F}^k$ and thus
\begin{align}\label{f28}\nonumber
&\mE\left[\|\ov^k-\overline{\nabla f}(\bx^k)\|^2|\mathcal{F}^k\right]=\mE\left[\left\|\frac{1}{n}\sum_{i=1}^{n}\left(v_i^k-\nabla f_i(x_i^k)\right)\right\|^2|\mathcal{F}^k\right] \\
&\hspace{4em}=\frac{1}{n^2}\mE\left[\|\bv^k-\nabla f(\bx^k)\|^2|\mathcal{F}^k\right].
\end{align}

For the third term in \eqref{a13}, we apply Lemma \ref{lem:Lsmooth} to get
\begin{equation}\label{ineq:nabla-xk-bar}
	\|\nabla F(\ox^k)\|^2\leq 2L(F(\ox^k)-F(x^*)),
\end{equation}
where we use $\nabla F(x^*)=0$.
Taking conditioned expectation on both sides of \eqref{a13} and substituting \eqref{a25} and \eqref{ineq:nabla-xk-bar} into it, we get
\begin{align}\label{ineq:gk-norm}
		&\mE\left[\|(I_{nd}-\bW_{\infty})\bd^{k}\|^2|\mathcal{F}^k\right]\\\nonumber
		\le&M_2^2\cdot\mE\Big[2 \|\bg^k-\bW_{\infty}\bg^k\|^2+\gamma^2L^2\|\bx^k-\bW_{\infty}\bx^k\|^2\\\nonumber
		+&\frac{\gamma^2}{n} \|\bv^k-\nabla f(\bx^k)\|^2+2\gamma^2Ln \left(F(\ox^k)-F(x^*)\right) |\mathcal{F}^k\Big].
\end{align}
Finally, taking total expectation on  \eqref{ineq:gk-norm} and \eqref{a12}, and then combining the results, we get \eqref{a11} and complete the proof.
\end{proof}

\subsubsection{\bf Step II}
In order to bound the network
	optimality gap $\mE[F(\ox^k)-F(x^*)]$, we first give the following lemma.
\begin{lemma}\label{lem-d}
	Under Assumptions \ref{asm-lips}--\ref{asm-Hbound}, consider the sequence $\{\bd^k\}$ generated by \eqref{eq:alg}. For all $k\geq 1$, we have
	\begin{align}\label{a20}
	&\mE\left[\|\od^{k}-\oH^k\nabla F(\ox^k)\|^2\right]\\\nonumber
	\leq& \frac{2M_2^2}{n}\Big(L^2\mE\left[\|\bx^{k}-\bW_{\infty}\bx^{k}\|^2\right]+\frac{\gamma^2}{4} \mE\left[\|\bg^{k}-\bW_{\infty}\bg^{k}\|^2\right]\\\nonumber
	&+\frac{1}{n}\mE\left[\|\bv^k-\nabla f (\bx^k)\|^2\right]\Big).
	\end{align}
\end{lemma}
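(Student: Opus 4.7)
The plan is to mimic the decomposition used in the proof of \autoref{lem-x} but apply it to the averaged quantity $\od^k$ rather than to $(I_{nd}-\bW_\infty)\bd^k$. The key observation is that
\[
\od^k-\oH^k\nabla F(\ox^k)=\frac{1}{n}\sum_{i=1}^n H_i^k\bigl(g_i^k-\nabla F(\ox^k)\bigr),
\]
so we split the inner bracket additively as $g_i^k-\og^k$ plus $\og^k-\nabla F(\ox^k)$. The second piece is node-independent and, after averaging the $H_i^k$, contributes exactly $\oH^k(\og^k-\nabla F(\ox^k))$; the first piece is the one that should produce the $\gamma^2$ factor.

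To extract $\gamma$, I would recenter the Hessian approximations at $\bar M=(M_1+M_2)/2$, writing $H_i^k=(H_i^k-\bar M I_d)+\bar M I_d$. Because $\sum_i(g_i^k-\og^k)=0$, the constant $\bar M I_d$ term vanishes, leaving
\[
\frac{1}{n}\sum_{i=1}^n H_i^k(g_i^k-\og^k)=\frac{1}{n}\sum_{i=1}^n(H_i^k-\bar M I_d)(g_i^k-\og^k).
\]
Assumption \ref{asm-Hbound} gives $\|H_i^k-\bar M I_d\|_2\le (M_2-M_1)/2=M_2\gamma/2$. Jensen's inequality (or Cauchy--Schwarz over the $n$ summands) then yields
\[
\Big\|\tfrac{1}{n}\sum_i(H_i^k-\bar M I_d)(g_i^k-\og^k)\Big\|^2\le\frac{M_2^2\gamma^2}{4n}\|\bg^k-\bW_\infty\bg^k\|^2.
\]
Combining with $\|\oH^k(\og^k-\nabla F(\ox^k))\|^2\le M_2^2\|\og^k-\nabla F(\ox^k)\|^2$ via $\|\oH^k\|_2\le M_2$ and the elementary bound $\|a+b\|^2\le 2\|a\|^2+2\|b\|^2$ gives
\[
\|\od^k-\oH^k\nabla F(\ox^k)\|^2\le\tfrac{M_2^2\gamma^2}{2n}\|\bg^k-\bW_\infty\bg^k\|^2+2M_2^2\|\og^k-\nabla F(\ox^k)\|^2.
\]

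The remaining step is to bound $\mE[\|\og^k-\nabla F(\ox^k)\|^2\mid\mathcal{F}^k]$, which is exactly the computation already performed in display \eqref{a25} of the previous lemma: using $\og^k=\ov^k$, the unbiasedness $\mE[\ov^k\mid\mathcal{F}^k]=\overline{\nabla f}(\bx^k)$, node-wise independence of the sampling noise, and the Lipschitz bound \eqref{b15}, one obtains
\[
\mE[\|\og^k-\nabla F(\ox^k)\|^2\mid\mathcal{F}^k]\le\tfrac{1}{n^2}\mE[\|\bv^k-\nabla f(\bx^k)\|^2\mid\mathcal{F}^k]+\tfrac{L^2}{n}\|\bx^k-\bW_\infty\bx^k\|^2.
\]
Substituting this back, taking total expectation, and regrouping the prefactor $2M_2^2/n$ yields the three-term bound in \eqref{a20}.

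No step is technically hard: the only place where care is needed is the recentering at $\bar M$, because without it one would only get a bound scaling with $M_2^2$ rather than $M_2^2\gamma^2/4$, which would be too weak for the eventual linear-rate argument when $\gamma$ is small (i.e., when the Hessian approximations are well conditioned). The rest is bookkeeping and a direct reuse of the variance computation already established in the proof of \autoref{lem-x}.
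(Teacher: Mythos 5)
Your proof is correct and follows essentially the same route as the paper's: the same split into $\od^k-\oH^k\og^k$ (handled by recentering the $H_i^k$ at $\bar M=(M_1+M_2)/2$, using $\sum_i(g_i^k-\og^k)=0$, to obtain the $M_2^2\gamma^2/(4n)$ factor) plus $\oH^k(\og^k-\nabla F(\ox^k))$, followed by the variance bound \eqref{a25} and the elementary inequality $\|a+b\|^2\le 2\|a\|^2+2\|b\|^2$. The constants you obtain match \eqref{a20} exactly, so nothing further is needed.
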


\begin{proof}
According to Assumption \ref{asm-Hbound}, we have
\begin{equation}\label{a21}
	\begin{aligned}
	&\mE \left[\|\od^{k}-\oH^k\nabla F(\ox^k)\|^2\right]\\
	=&\mE\left[\|\od^{k}-\oH^k\og^k+ \oH^k \left(\og^k-\nabla F(\ox^k)\right)\|^2\right]\\
%	\le&\mE\left[ 2\|\od^{k}-\oH^k\og^k\|^2+2\|\oH^k \left(\og^k-\nabla F(\ox^k)\right)\|^2\right]\\
	\le& \mE\left[2\|\od^{k}-\oH^k\og^k\|^2+2M_2^2\|\og^k-\nabla F(\ox^k)\|^2\right].
	\end{aligned}
\end{equation}

For the first term in \eqref{a21}, we compute
\begin{align}\label{eq:dkbar}	
\od^k=\frac{1}{n}\sum_{i=1}^{n}H_i^kg_i^k
=&\frac{1}{n}\sum_{i=1}^{n}H_i^k(g_i^k-\og^k)+\oH^k\og^k\\\nonumber
=&\frac{1}{n}\sum_{i=1}^{n}\left(H_i^k-\bar{M}I_{d}\right)(g_i^k-\og^k)+\oH^k\og^k,
\end{align}
where $\bar{M}=\frac{M_1+M_2}{2}$. Thus we have
\begin{align}
\|\od^{k}-\oH^k\og^k\|^2
\le& \frac{1}{n}\sum_{i=1}^{n}\left\|\left(H_i^k-\bar{M}I_{d}\right)(g_i^k-\og^k)\right\|^2 \label{a23} \\
\le&\frac{M^2_2\gamma^2}{4n}\|\bg^k-\bW_{\infty}\bg^k\|^2. \notag
\end{align}
Taking total expectation on both sides of \eqref{a23} and \eqref{a25} and substituting the results into the two terms at the right-hand side of \eqref{a21}, we get \eqref{a20} and complete the proof.
\end{proof}

With Lemma \ref{lem-d}, we are ready to give the recursion of the network optimality gap $\mE[F(\ox^k)-F(x^*)]$ as follows.
\begin{lemma}\label{lem-F}
	Under Assumptions \ref{asm-lips}-- \ref{asm-Hbound}, when the parameters satisfy the conditions in Theorem \ref{thom-parameter}, consider the sequence $\{\bx^k\}$ generated by \eqref{eq:alg}. For all $k\geq 1$, we have
	\begin{align}\label{a26}
	& n\mE\left[F(\ox^{k+1})-F(x^*)\right]\\\nonumber
	\hspace{-1em}\leq& (1\!-\!\tilde{\mu}\alpha)\!\cdot \! n\mE\left[F(\ox^{k})\!-\!F(x^*)\right]
	\!+\!\frac{\alpha M_2^2\eta}{2nM_1}\mE\left[\|\bv^k\!-\!\nabla f(\bx^k)\|^2\right]\\\nonumber
	+&\!\frac{1.01\alpha M_2^2}{M_1}\Big(\!2L^2\mE\left[\|\bx^{k}-\bW_{\infty}\bx^{k}\|^2\right]\!+\!\frac{\gamma^2}{4}\mE\left[\|\bg^{k}-\bW_{\infty}\bg^{k}\|^2\right]\!\Big),
	\end{align}
where we define $\tilde{\mu}=0.99\mu M_1$ and $\eta=\gamma^2+4\alpha LM_1\leq 1.02$.
\end{lemma}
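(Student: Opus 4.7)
\textbf{Proof plan for Lemma \ref{lem-F}.} The plan is to start from a standard $L$-smoothness descent inequality for $F$ applied to the averaged iterate $\overline{x}^{k+1}$, and then carefully decompose the step direction $\overline{d}^k$ into its ``ideal'' component $\overline{H}^k\nabla F(\overline{x}^k)$ plus an error that can be controlled by Lemma \ref{lem-d}. First I would use the fact that $W$ is doubly stochastic, so averaging the first line of \eqref{eq:alg} gives $\overline{x}^{k+1}=\overline{x}^k-\alpha\overline{d}^k$. Plugging this into the $L$-smooth upper bound of $F$ yields
\begin{equation*}
F(\overline{x}^{k+1})\le F(\overline{x}^k)-\alpha\nabla F(\overline{x}^k)^{\!\top}\overline{d}^k+\tfrac{L\alpha^2}{2}\|\overline{d}^k\|^2.
\end{equation*}

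Next I would rewrite the linear term by splitting $\overline{d}^k=\overline{H}^k\nabla F(\overline{x}^k)+\bigl(\overline{d}^k-\overline{H}^k\nabla F(\overline{x}^k)\bigr)$. Since every $H_i^k\succeq M_1 I_d$, the average satisfies $\overline{H}^k\succeq M_1 I_d$, so
\begin{equation*}
-\alpha\nabla F(\overline{x}^k)^{\!\top}\overline{H}^k\nabla F(\overline{x}^k)\le -\alpha M_1\|\nabla F(\overline{x}^k)\|^2.
\end{equation*}
The cross term I would bound via Young's inequality with a parameter $\beta$ chosen proportional to $M_1$, producing a term $\tfrac{\alpha\beta}{2}\|\nabla F(\overline{x}^k)\|^2$ that can be absorbed into the dominant $-\alpha M_1\|\nabla F\|^2$ piece, and a reciprocal term of the form $\tfrac{\alpha}{2\beta}\|\overline{d}^k-\overline{H}^k\nabla F(\overline{x}^k)\|^2$. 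For the quadratic term I would use $\|\overline{d}^k\|^2\le 2\|\overline{H}^k\nabla F(\overline{x}^k)\|^2+2\|\overline{d}^k-\overline{H}^k\nabla F(\overline{x}^k)\|^2\le 2M_2^2\|\nabla F(\overline{x}^k)\|^2+2\|\overline{d}^k-\overline{H}^k\nabla F(\overline{x}^k)\|^2$. Combining everything leaves a coefficient in front of $\|\nabla F(\overline{x}^k)\|^2$ of roughly $-\alpha M_1+\tfrac{\alpha\beta}{2}+L\alpha^2 M_2^2$, and coefficients in front of the error norm of order $\alpha/\beta+L\alpha^2$.

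I would then invoke $\mu$-strong convexity through $\|\nabla F(\overline{x}^k)\|^2\ge 2\mu\bigl(F(\overline{x}^k)-F(x^*)\bigr)$ to convert the $\|\nabla F\|^2$ coefficient into a contraction factor on $F(\overline{x}^k)-F(x^*)$. Taking total expectation and inserting the bound on $\mathbb{E}[\|\overline{d}^k-\overline{H}^k\nabla F(\overline{x}^k)\|^2]$ from Lemma \ref{lem-d} (which already reports $\|\bx^k-\bW_\infty\bx^k\|^2$, $\|\bg^k-\bW_\infty\bg^k\|^2$ and $\|\bv^k-\nabla f(\bx^k)\|^2$ contributions with the right $M_2^2,\gamma^2$ structure) produces exactly the three error terms on the right-hand side of \eqref{a26}, with the $1/n$ factor coming from Lemma \ref{lem-d}.

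The main obstacle will be the constant bookkeeping. To land on the stated $1-\tilde{\mu}\alpha=1-0.99\mu M_1\alpha$ contraction and on the factor $1.01$ in front of the error cluster, I need the $\beta$ in Young's inequality and the $O(\alpha^2)$ term $L\alpha^2 M_2^2\|\nabla F\|^2$ to be small enough that the effective coefficient of $\|\nabla F(\overline{x}^k)\|^2$ is at least $0.99\alpha M_1$. Both require the step-size condition $\alpha\le\frac{(1-\sigma^2)^2\mu M_1}{200L^2M_2^2}$ from Theorem \ref{thom-parameter}: it guarantees $L\alpha M_2^2/M_1\ll 1$, which is what drives the $0.99/1.01$ slack and also the quoted bound $\eta=\gamma^2+4\alpha L M_1\le 1.02$. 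The rest of the argument is routine algebra once these Young's parameters are fixed, and no information about the true Hessian is used anywhere, consistent with the paper's framework assumption.
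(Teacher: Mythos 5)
Your skeleton reproduces the paper's: average the update to get $\ox^{k+1}=\ox^k-\alpha\od^k$, apply $L$-smoothness, split $\od^k$ around $\oH^k\nabla F(\ox^k)$, lower-bound $\langle\nabla F(\ox^k),\oH^k\nabla F(\ox^k)\rangle$ by $M_1\|\nabla F(\ox^k)\|^2$, control the quadratic term through Lemma \ref{lem-d}, and finish with strong convexity and the step-size condition. The gap is in your treatment of the linear (cross) term. You propose Young's inequality with parameter $\beta\sim M_1$ followed by Lemma \ref{lem-d}'s bound on $\mE[\|\od^k-\oH^k\nabla F(\ox^k)\|^2]$. That bound carries the gradient-noise term with coefficient $\frac{2M_2^2}{n^2}$ (no $\gamma^2$), so your route produces a variance coefficient of order $\frac{\alpha M_2^2}{M_1 n^2}$, whereas the lemma asserts $\frac{\alpha M_2^2\eta}{2n^2M_1}$ with $\eta=\gamma^2+4\alpha LM_1$: the $\gamma^2$ part must come from the cross term (the $4\alpha LM_1$ part is all that the $\alpha^2L\|\cdot\|^2$ term contributes). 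So your argument proves a strictly weaker inequality than \eqref{a26}, not the stated one, and the deficiency is not just cosmetic: when $\gamma=1-M_1/M_2$ is small the stated coefficient nearly vanishes, and this $\gamma^2$ structure is exactly what makes the theorem's condition $B\lesssim\zeta(1-\sigma^2)^2/\gamma^2$ sufficient in the later steps (e.g., the bounds on $H_\alpha{_{21}},H_\alpha{_{22}}$ use $\beta\gamma^2\le\zeta(1-\sigma^2)^2/10$ through this $\eta$).

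The missing idea is a conditioning/centering argument rather than a pathwise Cauchy--Schwarz. Since $\nabla F(\ox^k)$ is $\mathcal{F}^k$-measurable, the cross term equals $\langle\nabla F(\ox^k),\mE[\oH^k\nabla F(\ox^k)-\od^k\mid\mathcal{F}^k]\rangle$, and one should apply Young's inequality to the \emph{norm of the conditional expectation}, not to the conditional expectation of the norm. Writing $\od^k=\frac1n\sum_i(H_i^k-\bar MI_d)(g_i^k-\og^k)+\oH^k\og^k$ with $\bar M=\frac{M_1+M_2}{2}$ and using $\og^k=\ov^k$, $\mE[\ov^k\mid\mathcal{F}^k]=\overline{\nabla f}(\bx^k)$, the piece $\bar MI_d(\overline{\nabla f}(\bx^k)-\ov^k)$ vanishes under the conditional expectation, so the gradient noise only survives multiplied by $\oH^k-\bar MI_d$, whose spectral norm is at most $\frac{M_2\gamma}{2}$; this is precisely where the $\gamma^2$ on $\mE[\|\bv^k-\nabla f(\bx^k)\|^2]$ comes from (together with the $4M_2^2L^2/n$ and $\frac{M_2^2\gamma^2}{2n}$ coefficients on the consensus and tracking errors). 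Without this step your constants cannot be pushed into the stated $\eta\le1.02$ form, so you should add it before the ``routine algebra'' stage; the rest of your plan then goes through as in the paper.
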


\begin{proof}
Taking the average of the update of $\bx^{k+1}$ in \eqref{eq:alg} over all the nodes, we have
\begin{equation}
	\ox^{k+1}=\ox^k-\alpha\od^k.
\end{equation}
Then we compute the global cost at the average $\ox^{k+1}$ and get
\begin{align}\label{a28}
F(\ox^{k+1})
\le& F(\ox^k)-\alpha\Big\langle \nabla F(\ox^k),\od^k\Big\rangle+\frac{L\alpha^2}{2}\|\od^k\|^2\\\nonumber
\le& F(\ox^k)-\alpha\Big\langle \nabla F(\ox^k),\oH^k\nabla F(\ox^k)\Big\rangle\\\nonumber
&+\alpha\Big\langle \nabla F(\ox^k),\oH^k\nabla F(\ox^k)-\od^k\Big\rangle\\\nonumber
&+\alpha^2L\left(\|\oH^k\nabla F(\ox^k)\|^2+\|\oH^k\nabla F(\ox^k)-\od^k\|^2\right),
\end{align}
where we use Assumption \ref{asm-lips} in the first inequality and Cauchy-Schwarz inequality in the second inequality.

For the second term in the last inequality of \eqref{a28}, we have
\begin{equation}\label{ineq:value_decrease}
	\Big\langle \nabla F(\ox^k),\oH^k\nabla F(\ox^k)\Big\rangle \geq M_1\|\nabla F(\ox^k)\|^2.
\end{equation}
In order to bound the third term in the last inequality of \eqref{a28}, we derive
\begin{align}
\hspace{-1em}& \mE\left[\oH^k\nabla F(\ox^k)-\od^k\mid \mathcal{F}^k \right]\\
\hspace{-1em}=&\mE\left[\oH^k(\nabla F(\ox^k)-\og^k)-\frac{1}{n}\sum_{i=1}^{n}\left(H_i^k-\bar{M}I_{d}\right)(g_i^k-\og^k)\mid\mathcal{F}^k\right] \notag\\
\hspace{-1em}=&\mE\Big[\oH^k\left(\nabla F(\ox^k)-\overline{\nabla f}(\bx^k)\right)+\left(\oH^k-\bar{M}I_d\right)(\overline{\nabla f}(\bx^k)-\ov^k) \notag\\
\hspace{-1em}&-\frac{1}{n}\sum_{i=1}^{n}\left(H_i^k-\bar{M}I_{d}\right)(g_i^k-\og^k)\mid\mathcal{F}^k\Big], \notag
\end{align}
where the first equality holds because of \eqref{eq:dkbar} and the last equality holds because of the fact that $\mE[\og^k|\mathcal{F}^k]=\mE[\ov^k|\mathcal{F}^k]=\overline{\nabla f}(\bx^k)$. Then, using the triangle inequality and the fact that $\|H^k_i-\bar{M}I_d\|_2\leq\frac{{M}_2\gamma}{2}, \forall i$ and  $\|\oH^k-\bar{M}I_d\|_2\leq\frac{M_2\gamma}{2}$, we have
\begin{align}
	&\left\|\mE\left[\oH^k\nabla F(\ox^k)-\od^k\mid \mathcal{F}^k \right]\right\|^2\\\nonumber
	\leq&\mE \Big[4M_2^2\left\|\nabla F(\ox^k)-\overline{\nabla f}(\bx^k)\right\|^2+\frac{M_2^2\gamma^2}{2n}\|\bg^k-\bW_{\infty}\bg^k\|^2\\\nonumber
	&+M_2^2\gamma^2\|\overline{\nabla f}(\bx^k)-\ov^k\|^2 |\mathcal{F}^k\Big]\\\nonumber
	\leq& \frac{M_2^2}{n}\cdot\mE  \Big[4L^2\|\bx^{k}-\bW_{\infty}\bx^{k}\|^2+ \frac{\gamma^2}{2}\|\bg^{k}-\bW_{\infty}\bg^{k}\|^2\\\nonumber
	&+\frac{\gamma^2}{n}\|\bv^k-\nabla f(\bx^k)\|^2|\mathcal{F}^k\Big],
\end{align}
where the last inequality holds because of \eqref{b15} and \eqref{f28}. Thus, applying Cauchy-Schwarz inequality to the third term in \eqref{a28}, we get
\begin{align}\label{ineq:value_gradient_error}
		&\mE\left[\Big\langle \nabla F(\ox^k),\left(\oH^k\nabla F(\ox^k)-\od^k\right)\Big\rangle\mid \mathcal{F}^k\right] \\\nonumber
		\leq&\frac{M_1}{2}\|\nabla F(\ox^k)\|^2+\frac{1}{2M_1}\left\|\mE\left[\oH^k\nabla F(\ox^k)-\od^k\mid \mathcal{F}^k \right]\right\|^2 \notag\\\nonumber
		\leq & \frac{M_1}{2}\|\nabla F(\ox^k)\|^2+\frac{M_2^2}{2nM_1}\cdot\mE  \Big[4L^2\|\bx^{k}-\bW_{\infty}\bx^{k}\|^2\\\nonumber
		&+\frac{\gamma^2}{2}\|\bg^{k}-\bW_{\infty}\bg^{k}\|^2+\frac{\gamma^2}{n}\|\bv^k-\nabla f(\bx^k)\|^2|\mathcal{F}^k\Big]. \notag
\end{align}

By taking total expectation on both sides of \eqref{a28}, as well as substituting \eqref{ineq:value_decrease}, \eqref{ineq:value_gradient_error} and \eqref{a20}, we have
\begin{align}\label{a29}
&\mE\left[F(\ox^{k+1})\right]\\\nonumber
\le& \mE \left[F(\ox^k)\right]-\left(\frac{\alpha M_1}{2}-\alpha^2 LM_2^2\right) \mE \left[\|\nabla F(\ox^k)\|^2\right]\\\nonumber
&+\left(\frac{2\alpha M_2^2}{nM_1}+\frac{2M_2^2\alpha^2L}{n}\right)L^2\mE\left[\|\bx^{k}-\bW_{\infty}\bx^{k}\|^2\right]\\\nonumber
&+\left(\frac{\alpha M_2^2\gamma^2}{2nM_1}+\frac{2M_2^2\alpha^2L}{n}\right)\frac{1}{n}\mE\left[\|\bv^k-\nabla f(\bx^k)\|^2\right]\\\nonumber
&+\left(\frac{\alpha M_2^2\gamma^2}{4nM_1}+\frac{M_2^2\alpha^2\gamma^2L}{2n}\right) \mE\left[\|\bg^{k}-\bW_{\infty}\bg^{k}\|^2\right].
\end{align}

Next, we bound the four coefficients at the right-hand side of \eqref{a29}. The parameters in Theorem \ref{thom-parameter} imply that
\begin{align}\label{a30}\nonumber
		\frac{\alpha M_1}{2}-\alpha^2 LM_2^2&=\frac{\alpha M_1}{2}\left(1-\frac{2\alpha L M_2^2}{M_1}\right)\geq0.495\alpha M_1,\\\nonumber
\frac{2\alpha M_2^2}{nM_1}+\frac{2M_2^2\alpha^2L}{n}&=\frac{\alpha M_2^2}{nM_1}(2+2\alpha M_1L)\le \frac{2.01\alpha M_2^2}{nM_1},\\\nonumber
\frac{\alpha M_2^2\gamma^2}{2nM_1}+\frac{2M_2^2\alpha^2L}{n}&\leq \frac{\alpha M_2^2\eta}{2nM_1}\\
\frac{\alpha M_2^2\gamma^2}{4nM_1}+\frac{M_2^2\alpha^2\gamma^2L}{2n}&\leq \frac{1.01\alpha M_2^2\gamma^2}{4nM_1},
\end{align}
where we use $\alpha M_2 L \le \frac{M_1}{200M_2}$, $\alpha M_1 L\le \frac{1}{200}$ and
$\eta=\gamma^2+4\alpha LM_1\leq 1.02$.
By Assumption \ref{asm-strongly}, we have
\begin{equation}\label{a31}
\|\nabla F(\ox^k)\|^2\ge 2\mu\left[F(\ox^k)-F(x^*)\right].
\end{equation}
Substituting \eqref{a30} and \eqref{a31} into \eqref{a29} and subtracting $F(x^*)$ on both sides, we have
\begin{align}
&\mE\left[F(\ox^{k+1})-F(x^*)\right]\\\nonumber
\leq& \left(1-0.99\alpha M_1\mu\right) \mE\left[F(\ox^{k})-F(x^*)\right]\\\nonumber
&+\frac{\alpha M_2^2\eta}{2n^2M_1}\mE\left[\|\bv^k-\nabla f(\bx^k)\|^2\right]+ \frac{1.01\alpha M_2^2}{M_1n}\times \\\nonumber
&\quad\Big(2L^2\mE\left[\|\bx^{k}-\bW_{\infty}\bx^{k}\|^2\right]+\frac{\gamma^2}{4}\mE\left[\|\bg^{k}-\bW_{\infty}\bg^{k}\|^2\right]\Big).
\end{align}
By multiplying $n$ on both sides and using the definition of $\tilde{\mu}=0.99M_1\mu$, we get \eqref{a26} and complete the proof.
\end{proof}

\subsubsection{\bf Step III}
The following lemma establishes a recursion of the gradient tracking error $\mE[\|\bg^k-\bW_{\infty}\bg^k\|^2]$.
\begin{lemma}\label{lem-g}
	Under Assumptions \ref{asm-lips}-- \ref{asm-Hbound}, when the parameters satisfy the conditions in Theorem \ref{thom-parameter}, consider the sequence $\{\bg^k\}$ generated by \eqref{eq:alg}. For all $k\geq 1$, we have
	\begin{align}\label{a34}
	&\mE\left[\|\bg^{k+1}-\bW_{\infty}\bg^{k+1}\|^2\right]\\\nonumber
	\leq& \frac{1+\sigma^2}{2}\mE\left[\|\bg^{k}-\bW_{\infty}\bg^{k}\|^2\right]+\frac{4L^2}{1-\sigma^2}\mE\left[\|\bx^{k+1}-\bx^k\|^2\right]\\\nonumber
	&+\!\frac{4}{1-\sigma^2}\left(\mE\left[\|\bv^{k+1}\!-\!\nabla f(\bx^{k+1})\|^2\right]\!+\!\mE\left[\|\bv^{k}\!-\!\nabla f(\bx^{k})\|^2\right]\right).
	\end{align}
\end{lemma}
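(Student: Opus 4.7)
The plan is to follow the same two-stage template used for the consensus error in Lemma \ref{lem-x}: first isolate the mixing contraction plus a perturbation term, and then bound the perturbation by the stochastic gradient errors plus a smoothness-based displacement term. Specifically, since $\bW_\infty \bW = \bW_\infty$ and $\bW_\infty^2 = \bW_\infty$, applying $(I_{nd}-\bW_\infty)$ to the gradient-tracking update in \eqref{eq:alg} yields the identity
\begin{equation*}
\bg^{k+1}-\bW_\infty \bg^{k+1} \;=\; \bW(\bg^k - \bW_\infty \bg^k) \;+\; (I_{nd}-\bW_\infty)(\bv^{k+1}-\bv^k).
\end{equation*}
Squaring, taking Young's inequality with parameter $\eta = (1-\sigma^2)/(2\sigma^2)$, and using the averaging property \eqref{b11} applied to $\bg^k$ (i.e.\ $\|\bW(\bg^k-\bW_\infty\bg^k)\|^2\le\sigma^2\|\bg^k-\bW_\infty\bg^k\|^2$), the first term picks up the coefficient $(1+\sigma^2)/2$. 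Since $\|I_{nd}-\bW_\infty\|_2\le 1$ and $(1+\sigma^2)/(1-\sigma^2)\le 2/(1-\sigma^2)$, the perturbation term is bounded by $\tfrac{2}{1-\sigma^2}\|\bv^{k+1}-\bv^k\|^2$.

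Next I would bound $\mE[\|\bv^{k+1}-\bv^k\|^2]$. The cleanest route is to split
\begin{equation*}
\bv^{k+1}-\bv^k \;=\; \bigl(\bv^{k+1}-\nabla f(\bx^{k+1})\bigr) \;+\; \bigl(\nabla f(\bx^{k+1})-\bv^k\bigr),
\end{equation*}
where the second term is $\mathcal{F}^{k+1}$-measurable and the conditional expectation of the first is zero by \eqref{b13}. Taking conditional expectation kills the cross term, giving
\begin{equation*}
\mE\bigl[\|\bv^{k+1}-\bv^k\|^2 \mid \mathcal{F}^{k+1}\bigr] = \mE\bigl[\|\bv^{k+1}-\nabla f(\bx^{k+1})\|^2 \mid \mathcal{F}^{k+1}\bigr] + \|\nabla f(\bx^{k+1})-\bv^k\|^2.
\end{equation*}
Then I would add and subtract $\nabla f(\bx^k)$ inside the second term and use Cauchy--Schwarz with Lipschitz continuity of $\nabla f_{i,l}$ (Assumption \ref{asm-lips}) to obtain
\begin{equation*}
\|\nabla f(\bx^{k+1})-\bv^k\|^2 \le 2L^2\|\bx^{k+1}-\bx^k\|^2 + 2\|\bv^k-\nabla f(\bx^k)\|^2.
\end{equation*}

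Finally, taking total expectation, combining the two stages, and bounding $\mE[\|\bv^{k+1}-\nabla f(\bx^{k+1})\|^2]$ by twice itself so that both stochastic-error terms share the same coefficient $\tfrac{4}{1-\sigma^2}$, yields exactly \eqref{a34}. I do not anticipate any deep obstacle here; the only subtlety is being careful about the order of operations with the conditional expectation so that the cross term between the martingale increment $\bv^{k+1}-\nabla f(\bx^{k+1})$ and the $\mathcal{F}^{k+1}$-measurable remainder genuinely vanishes, rather than being absorbed by a crude $\|a+b\|^2\le 2\|a\|^2+2\|b\|^2$ step, which would inflate the constants beyond what the statement allows.
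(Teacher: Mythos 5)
Your proof is correct and follows essentially the same route as the paper: the same identity $\bg^{k+1}-\bW_\infty\bg^{k+1}=\bW(\bg^k-\bW_\infty\bg^k)+(I_{nd}-\bW_\infty)(\bv^{k+1}-\bv^k)$ with Young's parameter $\eta=(1-\sigma^2)/(2\sigma^2)$, followed by a bound on $\mE[\|\bv^{k+1}-\bv^k\|^2]$ that combines the conditional unbiasedness of $\bv^{k+1}$ given $\mathcal{F}^{k+1}$ with $L$-smoothness. The only, immaterial, difference is the order of the split: the paper first peels off $\nabla f(\bx^{k+1})-\nabla f(\bx^k)$ via $\|a+b\|^2\le 2\|a\|^2+2\|b\|^2$ and then cancels the cross term between the two noise terms, whereas you cancel the cross term against the $\mathcal{F}^{k+1}$-measurable remainder first and then split; both yield constants matching \eqref{a34} once you relax your slightly tighter coefficient on $\mE[\|\bv^{k+1}-\nabla f(\bx^{k+1})\|^2]$.
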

\begin{proof}
By the update of $\bg^{k+1}$ in \eqref{eq:alg}, we have
\begin{align}\label{a35}
&\bg^{k+1}-\bW_{\infty}\bg^{k+1}\\\nonumber
=& \mathbf{W}\mathbf{g}^{k}+\mathbf{v}^{k+1}-\mathbf{v}^{k} - \bW_{\infty}\left(\mathbf{W}\mathbf{g}^{k}+\mathbf{v}^{k+1}-\mathbf{v}^{k}\right)\\\nonumber
=&\bW(\bg^k-\bW_{\infty}\bg^k)+(I_{nd}-\bW_{\infty})(\bv^{k+1}-\bv^k),
\end{align}
where we use $\bW_{\infty}=\bW_{\infty}\bW=\bW\bW_{\infty}$. Taking square on both sides of \eqref{a35}, using Young's inequality with parameter $\eta=\frac{1-\sigma^2}{2\sigma^2}$ and then taking total expectation, we get
\begin{align}\label{a36}
&\mE\left[\|\bg^{k+1}-\bW_{\infty}\bg^{k+1}\|^2\right]\\
\le&\mE\Big[ (1+\eta)\|\bW(\bg^k-\bW_{\infty}\bg^k)\|^2 \notag\\
&\quad+(1+\eta^{-1})\|(I_{nd}-\bW_{\infty})(\bv^{k+1}-\bv^k)\|^2\Big] \notag\\
\le & \mE\Big[\frac{1+\sigma^2}{2}\|\bg^k-\bW_{\infty}\bg^k\|^2+\frac{2}{1-\sigma^2}\|\bv^{k+1}-\bv^k\|^2\Big], \notag
\end{align}
where we use the facts $\|\bW(\bg^k-\bW_{\infty}\bg^k)\|^2\le \sigma^2\|\bg^k-\og^k\|^2$ and $\|I_{nd}-\bW_{\infty}\|_2=1$ in the second inequality.

For the second term at the right-hand side of \eqref{a36}, expand $\mE\left[\|\bv^{k+1}-\bv^k\|^2\right]$  as
\begin{align}\label{a37}
&\mE\left[\|\bv^{k+1}-\bv^k\|^2\right]\\\nonumber
\le&2\mE\left[\|\bv^{k+1}-\bv^k-\left(\nabla f(\bx^{k+1})-\nabla f(\bx^{k})\right)\|^2\right]\\\nonumber
&+2\mE\left[\|\nabla f(\bx^{k+1})-\nabla f(\bx^k)\|^2\right]\\\nonumber
\le&2\mE\left[\|\bv^{k+1}-\nabla f(\bx^{k+1})\|^2\right]+2\mE\left[\|\bv^k-\nabla f(\bx^k)\|^2\right]\\\nonumber
&+2L^2\mE\left[\|\bx^{k+1}-\bx^k\|^2\right],
\end{align}
where in the last inequality, we use Assumption \ref{asm-lips} and the fact that
\begin{align*}
 & \mE\left[\left\langle \bv^{k+1}-\nabla f(\bx^{k+1}),\bv^k-\nabla f(\bx^k) \right\rangle\right] \\
=& \mE\left[\mE\left[\left\langle \bv^{k+1}-\nabla f(\bx^{k+1}),\bv^k-\nabla f(\bx^k) \right\rangle|\mathcal{F}^{k+1}\right]\right]=0.
\end{align*}

By substituting \eqref{a37} into \eqref{a36}, we obtain \eqref{a34} and complete the proof.
\end{proof}

Up to now, we have already bounded the consensus error $\mE[\|\bx^k-\bW_{\infty} \bx^k\|^2]$ in Lemma \ref{lem-x},
the network optimality gap $\mE[F(\ox^k)-F(x^*)]$ in Lemma \ref{lem-F} and the gradient tracking error $\mE[\|\bg^k-\bW_{\infty}\bg^k\|^2]$ in Lemma \ref{lem-g}, respectively.  Observe that the upper bounds in Lemmas \ref{lem-F} and \ref{lem-g} contain the variable difference $\|\bx^{k+1}-\bx^k\|$, as well as the variances of gradient estimators $\mE\left[\|\bv^{k}-\nabla f(\bx^k)\|^2\right]$ and $\mE\left[\|\bv^{k+1}-\nabla f(\bx^{k+1})\|^2\right]$. Below we proceed to further bounding these terms.

The following lemma bounds the difference of two successive iterations  $\|\bx^{k+1}-\bx^k\|$.
\begin{lemma}\label{lem-xk}
Under Assumptions \ref{asm-lips}--\ref{asm-Hbound}, 
%when the step size $\alpha$ satisfies \eqref{ineq:stepsize} in Theorem \ref{thom-parameter},
when the parameters satisfy the conditions in Theorem \ref{thom-parameter},
consider the sequence $\{\bx^k\}$ generated by \eqref{eq:alg}. For all $k\geq 1$, we have
\begin{align}\label{a38}
&\mE\left[\|\bx^{k+1}-\bx^k\|^2\right]\\\nonumber
\le& 8.01\mE\left[\|\bx^{k}-\bW_{\infty}\bx^k\|^2\right]+\frac{4\alpha^2M_2^2}{n}\mE\left[\|\bv^k-\nabla f(\bx^k)\|\right]\\\nonumber
&+16\alpha^2M_2^2L\cdot n \mE\left[F(\ox^{k})-F(x^*)\right]\\\nonumber
&+4\alpha^2M_2^2\mE\left[\|\bg^{k}-\bW_{\infty}\bg^{k}\|^2\right].
\end{align}
\end{lemma}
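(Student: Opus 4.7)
\noindent\textbf{Proof proposal for Lemma \ref{lem-xk}.}

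The plan is to begin from the update rule $\bx^{k+1}=\bW\bx^k-\alpha\bd^k$, rewrite the difference as $\bx^{k+1}-\bx^k=(\bW-I_{nd})(\bx^k-\bW_\infty\bx^k)-\alpha\bd^k$ (using $(\bW-I_{nd})\bW_\infty=0$, which follows from $\bW\bW_\infty=\bW_\infty$), and then isolate a consensus term and a direction term via the elementary inequality $\|a+b\|^2\le 2\|a\|^2+2\|b\|^2$. Since the eigenvalues of $\bW$ lie in $(-1,1]$ by Assumption \ref{asm-W}, $\|\bW-I_{nd}\|_2\le 2$, and so $\|\bx^{k+1}-\bx^k\|^2\le 8\|\bx^k-\bW_\infty\bx^k\|^2+2\alpha^2\|\bd^k\|^2$.

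Next I would bound $\|\bd^k\|^2=\|\bH^k\bg^k\|^2\le M_2^2\|\bg^k\|^2$ by Assumption \ref{asm-Hbound}, and then split $\|\bg^k\|^2\le 2\|\bg^k-\bW_\infty\bg^k\|^2+2n\|\og^k\|^2$. The key step is to bound $\mE[\|\og^k\|^2]$ tightly, not via another $2a^2+2b^2$ split, but via the exact bias--variance identity
\begin{equation*}
\mE[\|\og^k\|^2]=\mE[\|\ov^k-\overline{\nabla f}(\bx^k)\|^2]+\mE[\|\overline{\nabla f}(\bx^k)\|^2],
\end{equation*}
which uses $\og^k=\ov^k$ from \eqref{b12} together with $\mE[\ov^k|\mathcal{F}^k]=\overline{\nabla f}(\bx^k)$ from \eqref{b13}. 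The first term equals $\frac{1}{n^2}\mE[\|\bv^k-\nabla f(\bx^k)\|^2]$ by independence across nodes (as in \eqref{f28}); the second term is further split by $2a^2+2b^2$ into $2\|\overline{\nabla f}(\bx^k)-\nabla F(\ox^k)\|^2+2\|\nabla F(\ox^k)\|^2$, where the first piece is bounded via \eqref{b15} by $\frac{2L^2}{n}\|\bx^k-\bW_\infty\bx^k\|^2$ and the second by $4L(F(\ox^k)-F(x^*))$ using \eqref{ineq:nabla-xk-bar}.

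Substituting these into $\|\bg^k\|^2$ yields
\begin{equation*}
\mE[\|\bg^k\|^2]\le 2\mE[\|\bg^k\!-\!\bW_\infty\bg^k\|^2]+\tfrac{2}{n}\mE[\|\bv^k\!-\!\nabla f(\bx^k)\|^2]+4L^2\mE[\|\bx^k\!-\!\bW_\infty\bx^k\|^2]+8nL\,\mE[F(\ox^k)\!-\!F(x^*)],
\end{equation*}
and multiplying by $2\alpha^2 M_2^2$ and adding to the $8\|\bx^k-\bW_\infty\bx^k\|^2$ term produces exactly the right-hand side of \eqref{a38}, except that the coefficient of $\mE[\|\bx^k-\bW_\infty\bx^k\|^2]$ is $8+8\alpha^2 M_2^2 L^2$. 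To finish, I would invoke the step-size condition from Theorem \ref{thom-parameter} which, together with $M_1\le M_2$, gives $\alpha M_2 L\le 1/200$, so $8\alpha^2 M_2^2 L^2\le 2\cdot 10^{-4}$ and $8+8\alpha^2 M_2^2 L^2\le 8.01$.

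The main obstacle is getting the tight constant $\tfrac{4\alpha^2 M_2^2}{n}$ in front of the variance term; a naive $2a^2+2b^2$ split on $\|\og^k\|^2$ would give $\tfrac{8\alpha^2 M_2^2}{n}$ and fail. The resolution is to exploit the fact that $\ov^k$ is an \emph{unbiased} estimator of $\overline{\nabla f}(\bx^k)$ so that the cross term vanishes in expectation and the bias--variance decomposition is an equality. Everything else is a careful bookkeeping of Cauchy--Schwarz/Young bounds and an application of the step-size condition to absorb the lower-order $8\alpha^2M_2^2L^2$ perturbation.
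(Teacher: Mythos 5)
Your proposal is correct and follows essentially the same route as the paper's proof: the same decomposition $\bx^{k+1}-\bx^k=(\bW-I_{nd})(\bx^k-\bW_\infty\bx^k)-\alpha\bd^k$ with $\|\bW-I_{nd}\|_2\le 2$ and $\|\bd^k\|^2\le M_2^2\|\bg^k\|^2$, the same exact bias--variance identity for $\mE[\|\og^k\|^2]$ via $\og^k=\ov^k$, unbiasedness and cross-node independence, the same use of \eqref{b15} and \eqref{ineq:nabla-xk-bar}, and the same absorption of the residual $8\alpha^2M_2^2L^2$ into the constant $8.01$ via the step-size condition. No gaps; the constants all match the paper's.
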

\begin{proof}
By the update rule of $\bx^{k+1}$ in the \eqref{eq:alg}, we have
\begin{equation}\label{a39}
\begin{aligned}
\bx^{k+1}-\bx^k=&\bW\bx^k-\alpha \bd^k-\bx^k\\
=&(\bW-I_{nd})(\bx^k-\bW_{\infty}\bx^k)-\alpha \bd^k,
\end{aligned}
\end{equation}
where we use the fact that $(\bW-I_{nd})\bW_{\infty}\bx^k=0$ in the last equality. Then we have
\begin{align}\label{a40}\nonumber
\|\bx^{k+1}-\bx^k\|^2&\le 2\|(\bW-I_{nd})(\bx^k-\bW_{\infty}\bx^k)\|^2+2\alpha^2\|\bd^k\|^2\\
&\le 8\|\bx^k-\bW_{\infty}\bx^k\|^2+2\alpha^2M_2^2\|\bg^k\|^2,
\end{align}
where we use the facts of $\|\bW-I_{nd}\|_2\le 2$ and $\|\bd^k\|^2=\sum_{i=1}^{n}\|H_i^kg_i\|^2\le M_2^2\|\bg^k\|^2$ in the last inequality. For the term $\|\bg^k\|^2$, we have
\begin{align}\label{a41}
\mE\left[\|\bg^k\|^2\right]&\le \mE\left[2\|\bg^k-\bW_{\infty}\bg^k\|^2+2\|\bW_{\infty}\bg^k\|^2\right]\\
&=\mE\left[2\|\bg^k-\bW_{\infty}\bg^k\|^2+2n\|\og^k\|^2\right]. \notag
\end{align}
For the term $\|\og^k\|^2$, we have
\begin{align}\label{a42}
\mE\|\og^k\|^2=&\mE\|\ov^k\|^2 \\
=&\mE\left[\|\ov^k-\overline{\nabla f}(\bx^k)\|^2+\|\overline{\nabla f}(\bx^k)\|^2\right] \notag\\
=&\mE\left[\frac{1}{n^2}\|\bv^k-\nabla f(\bx^k)\|^2+\|\overline{\nabla f}(\bx^k)\|^2\right], \notag
\end{align}
in which to derive the last equality we use the fact that $\mE\left[\mE\left[\left\langle v_i^k-\nabla f_i(x_i^k),v_j^k-\nabla f_j(x_j^k) \right\rangle |\mathcal{F}^k\right]\right]=0, ~\forall i\neq j$.  For the term $\|\overline{\nabla f}(\bx^k)\|^2$, we have
\begin{align}\label{a43}
\|\overline{\nabla f}(\bx^k)\|^2 \le& 2\|\overline{\nabla f}(\bx^k)-\nabla F(\ox^k)\|^2+2\|\nabla F(\ox^k)\|^2\\\nonumber
\le & \frac{2L^2}{n}\|\bx^k-\bW_{\infty}\bx^k\|^2+4L\left(F(\ox^k)-F(x^*)\right),
\end{align}
where we use \eqref{b15} and \eqref{ineq:nabla-xk-bar} to derive the last inequality.

Taking total expectation on both sides of \eqref{a40}, substituting \eqref{a41}--\eqref{a43} into the result and using the fact $8+8\alpha^2M_2^2L^2<8.01$ with the parameters in Theorem \ref{thom-parameter}, we obtain \eqref{a38} and complete the proof.
\end{proof}

The following lemma bounds the the variance of gradient estimators  $\mE\left[\|\bv^{k}-\nabla f(\bx^k)\|^2\right]$ at time $k$.
\begin{lemma}\label{lem-var}
	Under Assumptions \ref{asm-lips}--\ref{asm-Hbound}, consider the iterates $\{\bv^k\}$ generated by \eqref{eq:alg}. For all $k\geq 1$, we have
	\begin{align}\label{a44}
	&\mE\left[\|\bv^{k}-\nabla f(\bx^k)\|^2\right]\\\nonumber
	\leq& {4BL^2}\cdot\Big(\mE\left[\|\bx^{k}-\bW_{\infty}\bx^k\|^2\right]+\frac{2n}{L}\mE\left[F(\ox^{k})-F(x^*)\right]\\\nonumber
	&+\mE\left[\|\btau^{k}-\bW_{\infty}\btau^k\|^2\right]+\frac{2n}{L}\mE\left[F(\otau^{k})-F(x^*)\right]\Big).
	\end{align}
\end{lemma}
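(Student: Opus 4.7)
\textbf{Proof plan for Lemma \ref{lem-var}.} The overall strategy is to first convert the second moment of the SVRG gradient estimator into a bound on the sum of squared norms of sample-gradient differences, and then to split those differences through the average iterate $\ox^k$ (resp.\ $\otau^k$) and $x^*$ so that smoothness and the Bregman-type inequality of Lemma \ref{lem:Lsmooth} can be applied.

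First, since the mini-batches on different nodes are drawn independently, $\mathbb{E}[\|\bv^k-\nabla f(\bx^k)\|^2] = \sum_{i=1}^n \mathbb{E}[\|v_i^k-\nabla f_i(x_i^k)\|^2]$. Conditioned on $\mathcal{F}^k$, the quantity $v_i^k-\nabla f_i(x_i^k)$ is the sample-mean deviation of the finite population $\{\nabla f_{i,l}(x_i^k)-\nabla f_{i,l}(\tau_i^k)\}_{l=1}^{m_i}$ with population mean $\nabla f_i(x_i^k)-\nabla f_i(\tau_i^k)$. Using the classical variance formula for uniform sampling without replacement (so that the correction factor is exactly $(m_i-b_i)/((m_i-1)b_i) \le B$ by the definition \eqref{p18}) yields
\begin{equation*}
\mathbb{E}\!\left[\|v_i^k-\nabla f_i(x_i^k)\|^2 \mid \mathcal{F}^k\right] \le \frac{B}{m_i}\sum_{l=1}^{m_i}\|\nabla f_{i,l}(x_i^k)-\nabla f_{i,l}(\tau_i^k)\|^2.
\end{equation*}

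Next, I would use the triangle/Young inequality $\|a-b\|^2\le 2\|a-c\|^2+2\|b-c\|^2$ with $c=\nabla f_{i,l}(x^*)$ to split each term into an $x_i^k$-part and a $\tau_i^k$-part; by symmetry it suffices to handle one of them. For the $x_i^k$-part, insert $\nabla f_{i,l}(\ox^k)$:
\begin{equation*}
\|\nabla f_{i,l}(x_i^k)-\nabla f_{i,l}(x^*)\|^2 \le 2\|\nabla f_{i,l}(x_i^k)-\nabla f_{i,l}(\ox^k)\|^2+2\|\nabla f_{i,l}(\ox^k)-\nabla f_{i,l}(x^*)\|^2.
\end{equation*}
The first piece is bounded by $2L^2\|x_i^k-\ox^k\|^2$ by $L$-smoothness of $f_{i,l}$, and summing over $l$ and $i$ yields $2L^2\|\bx^k-\bW_\infty\bx^k\|^2$. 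For the second piece, Lemma \ref{lem:Lsmooth} applied at $(\ox^k,x^*)$ gives $\|\nabla f_{i,l}(\ox^k)-\nabla f_{i,l}(x^*)\|^2 \le 2L\bigl(f_{i,l}(\ox^k)-f_{i,l}(x^*)-\nabla f_{i,l}(x^*)^T(\ox^k-x^*)\bigr)$; averaging over $l$ collapses the $f_{i,l}$'s into $f_i$, and summing over $i$ the linear term vanishes because $\sum_i \nabla f_i(x^*)=n\nabla F(x^*)=0$, leaving $2Ln(F(\ox^k)-F(x^*))$.

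Putting the two pieces together produces, for the $x_i^k$-part,
\begin{equation*}
\sum_i \frac{1}{m_i}\sum_l \|\nabla f_{i,l}(x_i^k)-\nabla f_{i,l}(x^*)\|^2 \le 2L^2\|\bx^k-\bW_\infty\bx^k\|^2+4Ln\bigl(F(\ox^k)-F(x^*)\bigr),
\end{equation*}
and an identical bound with $(\bx^k,\ox^k)$ replaced by $(\btau^k,\otau^k)$ for the $\tau_i^k$-part. Combining, taking total expectation, and collecting the factor $2B\cdot 2 = 4B$ gives exactly \eqref{a44}. The main technical point to execute carefully is the sampling-without-replacement variance bound (tracking the constant so it lines up with $B$); once that is in place, the remaining steps are triangle inequality, smoothness, and Lemma \ref{lem:Lsmooth}, all of which are routine.
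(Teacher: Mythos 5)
Your proposal is correct and follows essentially the same route as the paper: the without-replacement variance factor $\frac{m_i-b_i}{(m_i-1)b_i}\le B$, the bound of the per-sample variance by $\frac{1}{m_i}\sum_l\|\nabla f_{i,l}(x_i^k)-\nabla f_{i,l}(\tau_i^k)\|^2$, the four-term split through $\ox^k$, $x^*$, $\otau^k$ (the paper does it as one four-way Cauchy--Schwarz rather than two nested two-way splits, with identical constants), and Lemma \ref{lem:Lsmooth} together with $\nabla F(x^*)=0$ for the function-value terms.
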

\begin{proof}
Defining $v_{i,l}^k=\Big(\nabla f_{i,l}(x_i^{k})-\nabla f_{i,l}(\tau_i^{k})\Big)+\nabla f_{i}(\tau_i^{k})$, which is the special case with $b_i=1$,  we have
\begin{equation}
v_i^k=\frac{1}{b_i}\sum_{l\in S^{k}_i} v_{i,l}^k.
\end{equation}
Under the condition $\mathcal{F}^k$, since $v_i^k$ is a random sampling of $\{v_{i,l}^k\}_{l=1}^{m_i}$ with size $b_i$ and without replacement, we have
\begin{equation}\label{a46}
\text{Var}(v_i^k)=\frac{m_i-b_i}{(m_i-1)b_i} \text{Var}(v_{i,l}^k)\le B\cdot \text{Var}(v_{i,l}^k),
\end{equation}
where we use the definition of non-sampling rate $B=\max_{i\in\{1,\cdots,n\}}\left\{\frac{m_i-b_i}{(m_i-1)b_i}\right\}$ in \eqref{p18}. Then \eqref{a46} implies
\begin{equation}\label{a47}
\begin{aligned}
\hspace{-2em}\mE[\|\bv^k-\nabla f(\bx^k)\|^2|\mathcal{F}^k]=\sum_{i=1}^{n}\text{Var}(v_i^k)
\le B \sum_{i=1}^{n}\text{Var}(v_{i,l}^k).
\end{aligned}
\end{equation}

By the definition of $v_{i,l}^k$, we have
\begin{align}\label{a48}
&\text{Var}(v_{i,l}^k) \\\nonumber
\leq&\frac{1}{m_i}\sum_{l=1}^{m_i}\left\|\nabla f_{i,l}(x_i^k)-\nabla f_{i,l}(\tau^k_i)\right\|^2\\\nonumber
\leq&\frac{4}{m_i}\sum_{l=1}^{m_i}\left\|\nabla f_{i,l}(x_i^k)-\nabla f_{i,l}(\ox^k)\right\|^2+\left\|\nabla f_{i,l}(\ox^k)-\nabla f_{i,l}(x^*)\right\|^2\\\nonumber
&\qquad +\left\|\nabla f_{i,l}(x^*)-\nabla f_{i,l}(\otau^k)\right\|^2+\left\|\nabla f_{i,l}(\otau^k)-\nabla f_{i,l}(\tau^k_i)\right\|^2\\\nonumber
\leq&\frac{4}{m_i}\sum_{l=1}^{m_i}L^2\left\|x_i^k-\ox^k\right\|^2+\left\|\nabla f_{i,l}(\ox^k)-\nabla f_{i,l}(x^*)\right\|^2\\\nonumber
&\qquad +\left\|\nabla f_{i,l}(x^*)-\nabla f_{i,l}(\otau^k)\right\|^2+L^2\left\|\otau^k-\tau^k_i\right\|^2\\\nonumber
=& 4L^2\left(\left\|x_i^k-\ox^k\right\|^2+\left\|\otau^k-\tau^k_i\right\|^2\right)+\frac{4}{m_i}\sum_{l=1}^{m_i}\\\nonumber
&\Big(\left\|\nabla f_{i,l}(\ox^k)-\nabla f_{i,l}(x^*)\right\|^2+\left\|\nabla f_{i,l}(x^*)-\nabla f_{i,l}(\otau^k)\right\|^2\Big).
\end{align}
With Lemma \ref{lem:Lsmooth}, we have
\begin{equation}\label{p59}
	\begin{split}
		&\left\|\nabla f_{i,l}(\ox^k)-\nabla f_{i,l}(x^*)\right\|^2\\
		\leq& 2L\left(f_{i,l}(\ox^k)- f_{i,l}(x^*) - \nabla f_{i,l}(x^*)^T(\ox^k -x^*) \right)
	\end{split}	
\end{equation}
By  taking the mean w.r.t. $l$ from 1 to $m_i$ and taking the sum w.r.t. $i$ from 1 to $n$, and using the fact that $\frac{1}{n m_i}\sum_{i=1}^n\sum_{l=1}^{m_i}\nabla f_{i,l}(x^*) = \nabla F(x^*)= 0$, from \eqref{p59} we get
\begin{align}\label{ineq:nablafij}
&\sum_{i=1}^{n}\frac{1}{m_i}\sum_{l=1}^{m_i}\left\|\nabla f_{i,l}(\ox^k)-\nabla f_{i,l}(x^*)\right\|^2\\\nonumber
\leq& 2nL\left(F(\ox^k)- F(x^*)\right).
\end{align}
We use similar derivation on $\left\|\nabla f_{i,l}(x^*)-\nabla f_{i,l}(\otau^k)\right\|^2$. Thus, combining \eqref{a47}, \eqref{a48} and \eqref{ineq:nablafij}, we have
	\begin{align}
		&\mE[\|\bv^k-\nabla f(\bx^k)\|^2]\\\nonumber
		\le & \mE\Big[4L^2B\left(\|\bx^{k}-\bW_{\infty}\bx^k\|^2+\|\btau^{k}-\bW_{\infty}\btau^k\|^2\right)\\\nonumber
		& + 8nLB\left(F(\ox^k)- F(x^*)\right)+8nLB\left(F(\otau^k)- F(x^*)\right)\Big],
	\end{align}
which completes the proof.
\end{proof}

Prior to bounding the variance of gradient estimators  $\mE\left[\|\bv^{k+1}-\nabla f(\bx^{k+1})\|^2\right]$ at time $k+1$, we need the following corollary which bounds the consensus error $\mE[\|\bx^{k+1}-\bW_{\infty} \bx^{k+1}\|^2]$ at time $k+1$.

\begin{corollary} \label{corol-x}
Under Assumptions \ref{asm-lips}--\ref{asm-Hbound},  when the parameters satisfy the conditions in Theorem \ref{thom-parameter},
consider the iterates $\{\bv^k\}$ generated by \eqref{eq:alg}. For all $k\geq 1$, we have
\begin{align}\label{a52}
		&\mE\left[\|\bx^{k+1}-\bW_{\infty}\bx^{k+1}\|^2\right]\\\nonumber
		\hspace{-1em}\leq& \left(\!1\!-\!\frac{0.99(1\!-\!\sigma^2)}{2}\!\right)\mE\left[\|\bx^{k}-\bW_{\infty}\bx^{k}\|^2\right]\!+\!\frac{0.01(1\!-\!\sigma^2)\alpha\mu M_1}{L^2}\\\nonumber
		&\Bigg(2\mE\left[\|\bg^k\!-\!\bW_{\infty}\bg^k\|^2\right]\!+\!2.05\gamma^2Ln\mE\left[F(\ox^k)-F(x^*)\right]\\\nonumber
		&\!+\!\frac{4L^2B\gamma^2}{n}\!\left(\mE\left[\|\btau^{k}\!-\!\bW_{\infty}\btau^k\|^2\right]\!+\!\frac{2}{L} n\mE\left[F(\otau^{k})\!-\!F(x^*)\right]\!\right)\!\Bigg).
\end{align}	
\end{corollary}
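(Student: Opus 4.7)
The plan is to obtain the corollary by composing the recursion from Lemma \ref{lem-x} with the variance bound in Lemma \ref{lem-var}, and then absorbing the resulting $O(\alpha^2)$ coefficients into an $O(\alpha)$ common prefactor by invoking the step-size and non-sampling rate conditions from Theorem \ref{thom-parameter}.

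First I would apply Lemma \ref{lem-x} to $\mE[\|\bx^{k+1}-\bW_\infty\bx^{k+1}\|^2]$. This produces the consensus contraction $\frac{1+\sigma^2}{2}\mE[\|\bx^k-\bW_\infty\bx^k\|^2]$, a quadratic correction $\frac{2\alpha^2\gamma^2 M_2^2 L^2}{1-\sigma^2}\mE[\|\bx^k-\bW_\infty\bx^k\|^2]$, and a group of three terms in $\mE[\|\bg^k-\bW_\infty\bg^k\|^2]$, $\mE[\|\bv^k-\nabla f(\bx^k)\|^2]$, and $\mE[F(\ox^k)-F(x^*)]$, all sharing the common prefactor $\frac{2\alpha^2 M_2^2}{1-\sigma^2}$.

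Next I would eliminate the variance term using Lemma \ref{lem-var}, which replaces $\mE[\|\bv^k-\nabla f(\bx^k)\|^2]$ by $4BL^2$ times a linear combination of consensus errors and optimality gaps at both $\bx^k$ and $\btau^k$. This substitution contributes an additional $\frac{8\alpha^2 M_2^2 \gamma^2 L^2 B/n}{1-\sigma^2}$ to the $\mE[\|\bx^k-\bW_\infty\bx^k\|^2]$ coefficient, inflates the coefficient of $\mE[F(\ox^k)-F(x^*)]$ by an extra factor $1+4B/n$, and produces two fresh $\btau^k$ terms whose structure already matches the final parenthesis of the claim. The decisive simplification comes from the step-size bound $\alpha \leq \frac{(1-\sigma^2)^2\mu M_1}{200L^2 M_2^2}$, which rewrites $\frac{2\alpha^2 M_2^2}{1-\sigma^2}$ as at most $\frac{0.01(1-\sigma^2)\alpha \mu M_1}{L^2}$, exactly the common prefactor in the claim. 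Combined with $B \leq 1/160$, which gives $1+4B/n \leq 1.025$ even at $n=1$, the coefficient $2\gamma^2 Ln$ is inflated to at most $2.05\gamma^2 Ln$, matching the stated constant precisely.

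The main obstacle, and the only coefficient that does not share the common prefactor, is the self-coefficient of $\mE[\|\bx^k-\bW_\infty\bx^k\|^2]$: I must verify $\frac{1+\sigma^2}{2}+\frac{2\alpha^2\gamma^2 M_2^2 L^2(1+4B/n)}{1-\sigma^2} \leq 1-\frac{0.99(1-\sigma^2)}{2}$, which reduces to $\frac{4\alpha^2\gamma^2 M_2^2 L^2(1+4B/n)}{(1-\sigma^2)^2} \leq 0.01$. I would close this by squaring the step-size bound to obtain $\alpha^2 L^2 M_2^2 \leq \frac{(1-\sigma^2)^4}{40000}$ (dropping the additional $\mu^2 M_1^2/(L^2 M_2^2) \leq 1$ factor that comes from $\mu M_1 \leq L M_2$), and then using $\gamma < 1$ together with $1+4B/n \leq 1.025$ to conclude that the left-hand side is at most $\frac{4\cdot 1.025\,(1-\sigma^2)^2}{40000}\ll 0.01$ for all admissible parameters. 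This is the unique spot where both the step-size cap and the non-sampling rate cap are simultaneously exploited.
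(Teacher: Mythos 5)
Your proposal is correct and follows essentially the same route as the paper: substitute Lemma \ref{lem-var} into Lemma \ref{lem-x}, identify the prefactor bound $\frac{2\alpha^2 M_2^2}{1-\sigma^2}\le\frac{0.01(1-\sigma^2)\alpha\mu M_1}{L^2}$ from the step-size cap, use $B\le 1/160$ to get the factor $1+4B/n\le 1.025$ (hence $2.05\gamma^2 Ln$), and verify the self-coefficient contraction $\frac{1+\sigma^2}{2}+\frac{2\gamma^2\alpha^2M_2^2L^2}{1-\sigma^2}(1+\frac{4B}{n})\le 1-\frac{0.99(1-\sigma^2)}{2}$. Your explicit verification of that last inequality via $\alpha^2L^2M_2^2\le\frac{(1-\sigma^2)^4}{40000}$ is a detail the paper only asserts, and it checks out.
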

\begin{proof}
By substituting Lemma \ref{lem-var} into Lemma \ref{lem-x}, we have
\begin{align}\label{a52}
	&\mE\left[\|\bx^{k+1}-\bW_{\infty}\bx^{k+1}\|^2\right]\\\nonumber
	\leq& \left(\frac{1+\sigma^2}{2}+\frac{2\gamma^2\alpha^2M_2^2L^2}{1-\sigma^2}\left(1+\frac{4B}{n}\right)\right)\mE\left[\|\bx^{k}-\bW_{\infty}\bx^{k}\|^2\right]\\\nonumber
	&+\frac{2\alpha^2M_2^2}{1-\sigma^2}\cdot\Bigg(2\mE\left[\|\bg^k-\bW_{\infty}\bg^k\|^2\right]\\\nonumber
	&+2L\gamma^2(1+\frac{4B}{n})\cdot n\mE\left[F(\ox^k)-F(x^*)\right]\\\nonumber
	&+\!\frac{4L^2B\gamma^2}{n}\!\left(\!\mE\left[\|\btau^{k}\!-\!\bW_{\infty}\btau^k\|^2\right]+\frac{2}{L} n\mE\left[F(\otau^{k})\!-\!F(x^*)\right]\!\right)\!\Bigg).
\end{align}
Then, by substituting the parameters in Theorem \ref{thom-parameter}, we have
\begin{align*}
\frac{1+\sigma^2}{2}+\frac{2\gamma^2\alpha^2M_2^2L^2}{1-\sigma^2}(1+\frac{4B}{n}) & \le 1-\frac{0.99(1-\sigma^2)}{2}, \\
\frac{2\alpha^2M_2^2}{1-\sigma^2} & \le \frac{0.01(1-\sigma^2)\alpha\mu M_1}{L^2}, \\
2L\gamma^2(1+\frac{4B}{n}) & \le 2.05L\gamma^2.
\end{align*}
This completes the proof.
\end{proof}

With Corollary \ref{corol-x},  we bound the variance of gradient estimators  $\mE\left[\|\bv^{k+1}-\nabla f(\bx^{k+1})\|^2\right]$ at time $k+1$.
\begin{corollary}\label{coro-var}
Under Assumptions \ref{asm-lips}--\ref{asm-Hbound},  when the parameters satisfy the conditions in Theorem \ref{thom-parameter},
consider the iterates $\{\bv^k\}$ generated by \eqref{eq:alg}. For all $k\geq 1$, we have
\begin{align}\label{a49}
	&\mE\left[\|\bv^{k+1}-\nabla f(\bx^{k+1})\|^2\right]\\\nonumber
	\leq& 4L^2B\Big(\mE\left[\|\bx^{k}-\bW_{\infty}\bx^k\|^2\right]\!+\!\frac{3\alpha M_2^2}{L M_1}\mE\left[\|\bg^{k}-\bW_{\infty}\bg^k\|^2\right]\\\nonumber
	&+1.01\cdot\frac{2n}{L} \mE\left[F(\ox^{k})-F(x^*)\right]+1.01\mE\left[\|\btau^{k}-\bW_{\infty}\btau^k\|^2\right]\\\nonumber
	&+1.01\cdot\frac{2n}{L}\mE\left[F(\otau^{k})-F(x^*)\right]\Big).
\end{align}
\end{corollary}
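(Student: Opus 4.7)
My plan is to apply Lemma~\ref{lem-var} with the time index shifted from $k$ to $k+1$, then substitute the bounds from Corollary~\ref{corol-x} and Lemma~\ref{lem-F} to translate every time-$(k+1)$ quantity back to time-$k$ data.

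A useful preliminary observation is that the SVRG reset rule eliminates the potentially annoying $\btau^{k+1}$ and $\otau^{k+1}$ terms. When $\mod(k+1,T)=0$, every $\tau_i^{k+1}$ equals $x_i^{k+1}$, so by the definition of $v_i^{k+1}$ in \eqref{qn-3} we get $v_i^{k+1}=\nabla f_i(x_i^{k+1})$, which makes the left-hand side of the statement equal to zero and the claim is trivially satisfied. Otherwise $\tau^{k+1}=\tau^k$, so $\btau^{k+1}=\btau^k$ and $\otau^{k+1}=\otau^k$; in this case Lemma~\ref{lem-var} applied at time $k+1$ already produces $\btau^k$- and $\otau^k$-quantities on the right-hand side, matching the target form.

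Given this reduction, the only remaining time-$(k+1)$ quantities are $\mE[\|\bx^{k+1}-\bW_\infty\bx^{k+1}\|^2]$ and $\mE[F(\ox^{k+1})-F(x^*)]$. I would bound the consensus term by Corollary~\ref{corol-x} and the optimality-gap term by Lemma~\ref{lem-F}. Lemma~\ref{lem-F} introduces a single new quantity, $\mE[\|\bv^k-\nabla f(\bx^k)\|^2]$, which I re-expand via Lemma~\ref{lem-var} (this time applied at the original index $k$). At the end of this chain, every contribution on the right-hand side is a time-$k$ quantity of exactly one of the five types that appear in the statement.

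The final step is coefficient bookkeeping. Both Corollary~\ref{corol-x} (directly) and Lemma~\ref{lem-F} (through its $O(\alpha)$ correction to the consensus and gradient-tracking norms, together with the subsequent re-expansion through Lemma~\ref{lem-var}) contribute to the same time-$k$ terms, so several copies must be collected and compared to the claimed coefficients. The step-size condition $\alpha\leq\frac{(1-\sigma^2)^2\mu M_1}{200L^2M_2^2}$ and the non-sampling-rate condition $B\leq\frac{1}{160}$ from Theorem~\ref{thom-parameter} are used precisely to absorb the $O(\alpha)$ and $O(B)$ cross-contributions into the ``$1.01$'' slack factors on the $F(\ox^k)$, $\btau^k$, and $F(\otau^k)$ coefficients, and to produce the $\frac{3\alpha M_2^2}{LM_1}$ coefficient on $\mE[\|\bg^k-\bW_\infty\bg^k\|^2]$. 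This coefficient matching is the main technical hurdle; every other step is a direct application of an already-proved result.
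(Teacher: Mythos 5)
Your proposal follows essentially the same route as the paper's own proof: dispose of the case $\mod(k+1,T)=0$ trivially, otherwise use $\btau^{k+1}=\btau^k$ and apply Lemma~\ref{lem-var} at time $k+1$, then substitute Corollary~\ref{corol-x} for the consensus term, Lemma~\ref{lem-F} for the optimality gap, and Lemma~\ref{lem-var} at time $k$ for the resulting $\mE[\|\bv^k-\nabla f(\bx^k)\|^2]$ term, finally absorbing the $O(\alpha)$ and $O(B)$ cross-terms into the stated coefficients via the parameter conditions of Theorem~\ref{thom-parameter}. The only part you leave unexecuted is the explicit verification of the four coefficient inequalities, which is exactly the bookkeeping the paper carries out and which goes through under the stated conditions.
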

\begin{proof}
	If $\mod (k+1,T)=0$, then $\bv^{k+1}=\nabla f(\bx^{k+1})$ and the proof is trivial. We consider $\mod (k+1,T)\neq 0$ and thus $\btau^{k+1}=\btau^k$. We have
	\begin{align}
	&\mE\left[\|\bv^{k+1}-\nabla f(\bx^{k+1})\|^2\right]\\\nonumber	
	\leq& {4L^2}{B}\Big(\mE\left[\|\bx^{k+1}-\bW_{\infty}\bx^{k+1}\|^2\right]+\frac{2n}{L} \mE\left[F(\ox^{k+1})-F(x^*)\right]\\\nonumber
	&+\mE\left[\|\btau^{k}-\bW_{\infty}\btau^k\|^2\right]+\frac{2n}{L} \mE\left[F(\otau^{k})-F(x^*)\right]\Big)\\\nonumber	
	\leq& {4L^2}{B}\cdot\mE\left[\|\bx^{k+1}-\bW_{\infty}\bx^{k+1}\|^2\right]\\\nonumber
	&+{4L^2}{B}\cdot\frac{2}{L}\Bigg( (1-\tilde{\mu}\alpha) n\mE\left[F(\ox^{k})-F(x^*)\right]\\\nonumber
	& +\!\frac{1.01\alpha M_2^2}{M_1}\!\Big(2L^2\mE\left[\|\bx^{k}-\bW_{\infty}\bx^{k}\|^2\right]\!\\\nonumber
    & \hspace{5.5em} +\!\frac{\gamma^2}{4}\mE\left[\|\bg^{k}\!-\!\bW_{\infty}\bg^{k}\|^2\right]\!\Big)\\\nonumber
	& +\frac{1.02\alpha M_2^2}{2nM_1}\mE\left[\|\bv^k-\nabla  f(\bx^k)\|^2\right]\!\Bigg) \\\nonumber
	&+4L^2B\left(\mE\left[\|\btau^{k}-\bW_{\infty}\btau^k\|^2\right]+\frac{2}{L} n\mE\left[F(\otau^{k})-F(x^*)\right]\right)\\\nonumber	
	\leq& 4L^2B\left(1-\frac{0.99(1-\sigma^2)}{2}+\frac{4.04\alpha M_2^2L}{M_1}+\frac{4.08\alpha M_2^2LB}{nM_1}\right)\\\nonumber
	&\times\mE\left[\|\bx^{k}-\bW_{\infty}\bx^{k}\|^2 \right]\\\nonumber
	&+4L^2B\left(\frac{0.02(1-\sigma^2)\alpha \mu M_1}{L^2}+\frac{2.02\alpha M_2^2}{L M_1}\right)\\\nonumber
	&\times \mE \left[\|\bg^k-\bW_{\infty}\bg^k\|^2\right]\\\nonumber
	&+4L^2B\!\left(\!1-\tu\alpha\!+\!\frac{4.08\alpha M_2^2LB}{ nM_1}\!+\!{0.011(1\!-\!\sigma^2)\alpha \mu M_1}\!\right) \\\nonumber
	&\times\frac{2n}{L}\mE \left[F(\ox^k)-F(x^*)\right]\\\nonumber
	&+4L^2B\left(1+\frac{4.08\alpha M_2^2 LB}{ n M_1}+ \frac{0.01(1-\sigma^2)\alpha \mu M_1}{L^2} \cdot \frac{4L^2 B}{n} \right)\\\nonumber
	&\times\left(\mE \left[\|\btau^k-\bW_{\infty} \btau^k\|^2\right]+\frac{2n}{L}\mE \left[\|F(\otau^k)-F( x^*)\|^2\right]\right).
	\end{align}
	In the derivation, we use Lemma \ref{lem-var} and $\btau^{k+1}=\btau^k$ in the first inequality. We use Lemma \ref{lem-F} in the second inequality. We use Corollary \ref{corol-x} and  Lemma \ref{lem-var} and regroup the results in the third inequality. Then, with the parameters in Theorem \ref{thom-parameter}, we can check that
\begin{align*}
1-\frac{0.99(1-\sigma^2)}{2}+\frac{4.04\alpha M_2^2L}{ M_1}+\frac{4.08\alpha M_2^2L B}{ nM_1} & \le 1, \\
\frac{0.02(1-\sigma^2)\alpha \mu M_1}{L^2}+\frac{2.02\alpha M_2^2}{L M_1} & \le \frac{3\alpha M_2^2}{L M_1}, \\
1-\tu\alpha\!+\!\frac{4.08\alpha M_2^2LB}{ nM_1}\!+\!{0.011(1\!-\!\sigma^2)\alpha \mu M_1} & \le 1.01, \\
1+\frac{4.08\alpha M_2^2 LB}{ n M_1}+ \frac{0.01(1-\sigma^2)\alpha \mu M_1}{L^2} \cdot \frac{4L^2 B}{n} & \le 1.01.
\end{align*}
This completes the proof.
\end{proof}

\subsubsection{\bf Step IV}
With the definition of optimality measure $\bu^k$, we are going to reorganize the bounds of the previous three steps into a compact form and specify the linear rate.
\begin{proposition}\label{prop-linear}
	Under Assumptions \ref{asm-lips}--\ref{asm-Hbound}, when the parameters satisfy the conditions in Theorem \ref{thom-parameter},
	consider the iterates $\{\bu^k\}$ generated by \eqref{eq:alg}. For all $k\geq 1$, we have
	\begin{equation}\label{a50}
	\bu^{k+1}\leq J_\alpha\bu^k+H_\alpha\tilde{\bu}^k.
	\end{equation}
	where we define $$\tilde{\bu}^k=\begin{bmatrix}
	\mE[\|\btau^k-\bW_{\infty} \btau^k\|^2] \\
	\frac{2n}{L}\mE\left[F(\otau^{k})-F(x^*)\right]\\0
	\end{bmatrix}.$$
	Consequently, we have
	\begin{equation}\label{a51}
	\bu^{(t+1)T}\leq \left((J_\alpha)^T+\sum_{t=0}^{T-1}(J_\alpha)^tH_\alpha\right)\bu^{tT},
	\end{equation}
	where $J_\alpha$, $H_\alpha \in \mathbb{R}^{3\times 3}$ are used to aggregate the constants and their specific forms can be found in the proof.
\end{proposition}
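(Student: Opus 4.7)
The plan is to assemble the three per-iteration recursions in Lemmas \ref{lem-x}, \ref{lem-F}, and \ref{lem-g} into a single vector inequality by eliminating every quantity that is not already a component of $\bu^k$ or $\tilde{\bu}^k$. The auxiliary quantities appearing in those three lemmas are the variance terms $\mE[\|\bv^k-\nabla f(\bx^k)\|^2]$ and $\mE[\|\bv^{k+1}-\nabla f(\bx^{k+1})\|^2]$, and the iterate difference $\mE[\|\bx^{k+1}-\bx^k\|^2]$ in Lemma \ref{lem-g}. Our task is to replace each of these by an affine combination of the entries of $\bu^k$ and $\tilde{\bu}^k$, and then to read off the resulting coefficient matrices as $J_\alpha$ and $H_\alpha$.

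First, I would use Lemma \ref{lem-var} to bound $\mE[\|\bv^k-\nabla f(\bx^k)\|^2]$ in terms of the consensus and optimality entries of $\bu^k$ together with the $\btau^k$-based entries of $\tilde{\bu}^k$. Substituting this into Lemma \ref{lem-x} already yields the first row of \eqref{a50}; indeed, exactly this book-keeping (and checking that the factor of $(1+\sigma^2)/2+\ldots$ is absorbed into $1-0.99(1-\sigma^2)/2$ under the step-size constraint of Theorem \ref{thom-parameter}) is what produces Corollary \ref{corol-x}, so the first row of the system is just Corollary \ref{corol-x} rewritten in $\bu^k,\tilde{\bu}^k$ coordinates. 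Similarly, substituting Lemma \ref{lem-var} into Lemma \ref{lem-F} and multiplying through by $2/L$ converts the network-optimality recursion into the second row of \eqref{a50}, where the SVRG-related residuals naturally land in the $\tilde{\bu}^k$ slots.

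The third row is the most delicate. I would start from Lemma \ref{lem-g}, which contains both $\mE[\|\bx^{k+1}-\bx^k\|^2]$ and the $k{+}1$-time variance $\mE[\|\bv^{k+1}-\nabla f(\bx^{k+1})\|^2]$. For the former, Lemma \ref{lem-xk} already expresses $\mE[\|\bx^{k+1}-\bx^k\|^2]$ in terms of the entries of $\bu^k$ (after using Lemma \ref{lem-var} once more to reduce the leftover variance term). For the latter, Corollary \ref{coro-var} does precisely the same job, reducing $\mE[\|\bv^{k+1}-\nabla f(\bx^{k+1})\|^2]$ to an affine combination of $\bu^k$ and $\tilde{\bu}^k$. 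Plugging both in and multiplying by $(1-\sigma^2)/L^2$ gives the third row. Throughout, the small numerical factors ($4.04$, $2.02$, $1.01$, etc.) from Corollaries \ref{corol-x}--\ref{coro-var} are absorbed by invoking $\alpha\le (1-\sigma^2)^2\mu M_1/(200L^2M_2^2)$ and $B\le 1/160$ from Theorem \ref{thom-parameter}, exactly as was done inside those corollaries. The result of the bookkeeping is the $3\times 3$ matrix $J_\alpha$ acting on $\bu^k$ and the $3\times 3$ matrix $H_\alpha$ acting on $\tilde{\bu}^k$; their explicit entries come directly from the coefficients just computed.

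For the iterated bound \eqref{a51}, the key observation is that $\btau^k$ is constant on each SVRG epoch: for $k\in[tT,(t+1)T)$ we have $\btau^k=\btau^{tT}=\bx^{tT}$, so $\tilde{\bu}^k=\tilde{\bu}^{tT}$ for every such $k$, and moreover $\tilde{\bu}^{tT}$ coincides with $\bu^{tT}$ on its first two coordinates (its third coordinate is zero by definition). Unrolling \eqref{a50} across one epoch therefore gives
\begin{equation*}
\bu^{(t+1)T}\le (J_\alpha)^T\bu^{tT}+\Big(\sum_{s=0}^{T-1}(J_\alpha)^s H_\alpha\Big)\tilde{\bu}^{tT},
\end{equation*}
and replacing $\tilde{\bu}^{tT}$ by $\bu^{tT}$ (using that its third coordinate vanishes and that its first two coordinates are bounded componentwise by those of $\bu^{tT}$) yields \eqref{a51}. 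The hard part of the proposition is not this telescoping, but rather the careful numerical verification that the coefficients produced by chaining Lemmas \ref{lem-var}, \ref{lem-xk} and Corollaries \ref{corol-x}, \ref{coro-var} into Lemmas \ref{lem-x}, \ref{lem-F}, \ref{lem-g} indeed fit the simple closed-form entries of $J_\alpha$ and $H_\alpha$; this is what the parameter choices in Theorem \ref{thom-parameter} are engineered to guarantee.
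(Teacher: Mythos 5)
Your proposal is correct and follows essentially the same route as the paper: substituting Lemma \ref{lem-var} into Lemmas \ref{lem-x} and \ref{lem-F} for the first two rows, chaining Lemma \ref{lem-xk}, Lemma \ref{lem-var} and Corollary \ref{coro-var} into Lemma \ref{lem-g} for the third row, and then unrolling over one SVRG epoch using that $\btau^k$ is fixed within the epoch and that $\tilde{\bu}^{tT}$ agrees with $\bu^{tT}$ (up to the zero third coordinate), with nonnegativity of the coefficient matrices justifying the componentwise replacement. The only difference is that you leave the explicit numerical entries of $J_\alpha$ and $H_\alpha$ implicit, whereas the paper's proof records them term by term; since the proposition defers those forms to the proof, this is a presentational rather than a substantive gap.
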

\begin{proof}
By combining the previous lemmas, we can bound $\bu^{k+1}$ in terms of only $\bu^k$ and $\tilde{\bu}^k$. By regrouping the right-hand side of \eqref{a52} in Corollary \ref{corol-x}, we specify the coefficients in Corollary \ref{corol-x} as
\begin{align}
&\mE\left[\|\bx^{k+1}-\bW_{\infty}\bx^{k+1}\|^2\right] \label{w55} \\
\le&\underbrace{\left(1-\frac{0.99(1-\sigma^2)}{2}\right)}_{J_{\alpha}{_{11}}}\mE\left[\|\bx^{k}-\bW_{\infty}\bx^{k}\|^2\right] \notag\\
&+\underbrace{0.011(1-\sigma^2)\gamma^2\alpha\mu M_1}_{J_{\alpha}{_{12}}} \cdot \frac{2n}{L}\mE\left[F(\ox^k)-F(x^*)\right] \notag\\
&+\underbrace{0.02\alpha\mu M_1}_{J_{\alpha}{_{13}}}\cdot \frac{1-\sigma^2}{L^2}\mE\left[\|\bg^k-\bW_{\infty}\bg^k\|^2|\mathcal{F}^k\right] \notag\\
&+\underbrace{\frac{0.04(1-\sigma^2)\gamma^2\alpha\mu M_1B}{n}}_{H_{\alpha}{_{11}}} \mE\left[\|\btau^{k}-\bW_{\infty}\btau^k\|^2\right] \notag\\
&+\underbrace{\frac{0.04(1-\sigma^2)\gamma^2\alpha\mu M_1B}{n}}_{H_{\alpha}{_{12}}} \cdot\frac{2n}{L} \mE\left[F(\otau^{k})-F(x^*)\right]. \notag
\end{align}

After substituting Lemma \ref{lem-var} into Lemma \ref{lem-F}, we specify the coefficients in Lemma \ref{lem-F} as
\begin{align}\label{a53}
&\frac{2n}{L}\mE\left[F(\ox^{k+1})-F(x^*)\right]\\\nonumber
\leq&
\underbrace{\frac{2}{L}\left(\frac{\alpha M_2^2\eta}{2nM_1}\!\cdot\! {4L^2}{B}\!+\!\frac{2.02\alpha M_2^2}{M_1}  L^2 \right)}_{J_{\alpha}{_{21}}}\mE\left[\|\bx^{k}-\bW_{\infty}\bx^{k}\|^2 \right]\\\nonumber
&+\underbrace{\left(\frac{\alpha M_2^2\eta}{2nM_1} {8L}{B}+(1-\tilde{\mu}\alpha) \right)}_{J_{\alpha}{_{22}}}\cdot \frac{2n}{L}\mE\left[F(\ox^{k})\!-\!F(x^*)\right]\\\nonumber
&+\underbrace{\frac{0.51\alpha\gamma^2 M_2^2L}{M_1(1-\sigma^2)}}_{J_{\alpha}{_{23}}}\cdot \frac{1-\sigma^2}{L^2}\mE\left[\|\bg^{k}-\bW_{\infty}\bg^{k}\|^2 \right] \\\nonumber
&+\underbrace{\frac{\alpha M_2^2\eta}{L nM_1}{4L^2}{B}}_{H_{\alpha}{_{21}}} \mE\left[\|\btau^{k}-\bW_{\infty}\btau^{k}\|^2 \right]\\\nonumber
&+\underbrace{\frac{\alpha M_2^2\eta}{nL M_1}{4L^2}{B}}_{H_{\alpha}{_{22}}}\cdot\frac{2n}{L}\mE\left[F(\otau^{k})\!-\!F(\btau^*)\right].\\\nonumber
\end{align}
After substituting Lemma \ref{lem-xk}, Lemma \ref{lem-var} and Corollary \ref{coro-var} into Lemma \ref{lem-g}, we specify the coefficients in Lemma \ref{lem-g} as
\begin{align}\label{a54}
&\frac{1-\sigma^2}{L^2}\mE\left[\|\bg^{k+1}-\bW_{\infty}\bg^{k+1}\|^2\right]\\\nonumber
\leq&
\underbrace{\left(32.04+32B+64\alpha^2M_2^2L^2\frac{B}{n}\right)}_{J_{\alpha}{_{31}}}\mE\left[\|\bx^{k}-\bW_{\infty}\bx^{k}\|^2 \right]\\\nonumber
&+\underbrace{\left(32\alpha^2M_2^2L^2+32.16B+64\alpha^2M_2^2L^2\frac{B}{n}\right)}_{J_{\alpha}{_{32}}} \cdot \\\nonumber
& \quad \frac{2n}{L}\mE\left[F(\ox^{k})-F(x^*)\right]\\\nonumber
&+\underbrace{\left(\frac{1+\sigma^2}{2} +\frac{16L^2\alpha^2M_2^2}{1-\delta^2}+\frac{4}{1-\sigma^2}{4L^2}{B}\frac{3\alpha M_2^2}{L M_1}\right) }_{J_{\alpha}{_{33}}} \cdot \\\nonumber &\quad \frac{1-\sigma^2}{L^2}\mE\left[\|\bg^{k}-\bW_{\infty}\bg^{k}\|^2 \right] \\\nonumber
&+\underbrace{\left(64\alpha^2M_2^2L^2\frac{B}{n}+32.16B\right)}_{H_{\alpha}{_{31}}} \mE\left[\|\btau^{k}-\bW_{\infty}\btau^{k}\|^2 \right]\\\nonumber
&+\underbrace{\left(64\alpha^2M_2^2L^2\frac{B}{n}+32.16B\right)}_{H_{\alpha}{_{32}}}\frac{2n}{L}\mE\left[F(\otau^{k})-F(\btau^*)\right].
\end{align}
Thus, we have
\begin{align}\label{f61}
\bu^{k+1}\le&
\begin{bmatrix}
J_{\alpha}{_{11}}& J_{\alpha}{_{12}} & J_{\alpha}{_{13}}\\ J_{\alpha}{_{21}}& J_{\alpha}{_{22}} & J_{\alpha}{_{23}}\\
J_{\alpha}{_{31}}& J_{\alpha}{_{32}} & J_{\alpha}{_{33}}\\
\end{bmatrix}
\bu^{k}\\\nonumber
&+
\begin{bmatrix}
H_{\alpha}{_{11}} & H_{\alpha}{_{12}} & 0\\
H_{\alpha}{_{21}}& H_{\alpha}{_{22}} & 0\\
H_{\alpha}{_{31}}& H_{\alpha}{_{32}} & 0\\
\end{bmatrix}
\begin{bmatrix}
\mE[\btau^k-\bW_{\infty} \btau^k] \\
\frac{2n}{L}\mE\left[F(\otau^{k})-F(x^*)\right]\\0
\end{bmatrix},
\end{align}
By recursion, from \eqref{f61} we get
\begin{align}\label{f62}
\nonumber
 \bu^{(k+1)T}&\le (J_{\alpha})^T\bu^{kT}+(J_{\alpha})^{T-1}H_{\alpha}\tilde{\bu}^{kT}+\cdots+ H_{\alpha}\tilde{\bu}^{kT+T-1}\\\nonumber
 &\le (J_{\alpha})^T\bu^{kT}+\left((J_{\alpha})^{T-1}H_{\alpha}+\cdots+(J_{\alpha})^0 H_{\alpha}\right)\tilde{\bu}^{kT}\\
 &\le \left((J_\alpha)^T+\sum_{l=0}^{T-1}(J_\alpha)^lH_\alpha\right)\bu^{kT},
\end{align}
where we use the fact that $\btau^{kT}=\cdots=\btau^{k(T+1)-1}=\bx^k$ for SVRG in the second inequality, and $\mE[\|\btau^{kT}-\bW_{\infty} \btau^{kT}\|^2]=\mE[\|\bx^{kT}-\bW_{\infty} \bx^{kT}\|^2]$ so that $\tilde{\bu}^{kT}=\bu^{kT}$ in the third inequality.
\end{proof}
The following lemma replaces the matrix coefficients $J_{\alpha}$ and $H_{\alpha}$ with their upper bounds $J_{\alpha,\beta}$ and $H_{\alpha, \beta}$, where $J_{\alpha}\le J_{\alpha,\beta}$ and $H_{\alpha}\le H_{\alpha, \beta}$.
\begin{lemma}
To simplify the notation, we define
	\begin{equation*}
	\beta=16B  \quad\text{and}\quad \zeta=\frac{M_1^2\mu^2}{M_2^2L^2}.
	\end{equation*}
Under Assumptions \ref{asm-lips}--\ref{asm-Hbound}, if the parameters satisfy the conditions \eqref{p18} in Theorem \ref{thom-parameter},
there exist $J_{\alpha,\beta}$ and $H_{\alpha,\beta}$ which are defined by
\begin{equation*}
J_{\alpha,\beta}=\begin{bmatrix}
1-\frac{0.99(1-\sigma^2)}{2}& 0.011(1-\sigma^2)\zeta\ta\gamma^2 & 0.02\zeta\ta\\ 4.1\ta&  1-0.96\zeta\ta & \frac{0.51\ta\gamma^2}{1-\sigma^2}\\
33& c & 1-\frac{0.99(1-\sigma^2)}{2}\\
\end{bmatrix}
\end{equation*}
and
\begin{equation*}
H_{\alpha,\beta}=\begin{bmatrix}
0.01\ta\beta\gamma^2(1-\sigma^2)& 0.01\ta\beta\gamma^2(1-\sigma^2) & 0\\
0.03\ta\zeta(1-\sigma^2)^2  & 0.03\ta\zeta(1-\sigma^2)^2 &0 \\
2.03\beta& 2.03\beta & 0\\
\end{bmatrix},
\end{equation*}
where  $c\triangleq0.162(1-\sigma^2)\ta\zeta+2.01\beta$.
We have
$$J_{\alpha}\le J_{\alpha,\beta}, ~ \text{and}~ H_{\alpha}\le H_{\alpha,\beta}.$$
Thus, if
\begin{equation}\label{q60}
\bu^{(t+1)T}\leq \left((J_{\alpha,\beta})^T+\sum_{t=0}^{T-1}(J_{\alpha,\beta})^lH_{\alpha,\beta}\right)\bu^{tT}
\end{equation}
converges linearly, then we have that $\bu^{(t+1)T}\leq \Big((J_{\alpha})^T+\sum_{t=0}^{T-1}(J_\alpha)^lH_\alpha\Big)\bu^{tT}$ also converges linearly.
\end{lemma}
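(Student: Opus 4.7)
The plan is to establish the entry-wise inequalities $J_\alpha\le J_{\alpha,\beta}$ and $H_\alpha\le H_{\alpha,\beta}$ by going through all nine entries of $J_\alpha$ extracted from \eqref{w55}, \eqref{a53}, \eqref{a54} and the six nonzero entries of $H_\alpha$, then invoking monotonicity of nonnegative matrix multiplication to transfer linear convergence from the dominating recursion \eqref{q60} to the original one. Throughout, I would use the identifications $\ta=M_2^2L^2\alpha/(M_1\mu)$, $\zeta=M_1^2\mu^2/(M_2^2L^2)$, $\beta=16B$, and $\gamma=1-M_1/M_2$ to rewrite every coefficient in the common alphabet of $(\ta,\zeta,\beta,\gamma,\sigma)$, then use the step-size bound $\alpha\le (1-\sigma^2)^2\mu M_1/(200L^2M_2^2)$ (equivalently $\alpha M_2^2L\le M_1/(200M_2)$ and $\alpha M_1L\le 1/200$) and the batch bound $B\le (1/160)\min\{1,\zeta(1-\sigma^2)^2/\gamma^2\}$ from \eqref{ineq:stepsize} to absorb every quadratic term $\alpha^2M_2^2L^2$ and every cross term $\alpha M_2^2 L B/(nM_1)$ into a small constant slack.

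Concretely, the first row is immediate: the $(1,1)$ and $(1,3)$ entries of $J_\alpha$ already appear in the stated form after factoring $\zeta\ta=\alpha\mu M_1$, and the $(1,2)$ entry rewrites $0.011(1-\sigma^2)\gamma^2\alpha\mu M_1$ as $0.011(1-\sigma^2)\zeta\ta\gamma^2$. In row two, the $(2,1)$ entry $(2/L)(\alpha M_2^2\eta\cdot 4L^2B/(2nM_1)+2.02\alpha M_2^2L^2/M_1)$ is bounded by $4.1\ta$ because $\eta\le 1.02$, $B/n\le 1/160$, and $2(2.02\alpha M_2^2L^2/M_1)/L=4.04\ta$ leaves room for the $B$-term; the $(2,2)$ entry uses $1-\tu\alpha+8\alpha M_2^2\eta LB/(2nM_1)\le 1-0.96\zeta\ta$ by slightly relaxing the constant in $\tu=0.99\mu M_1$ and observing $\tu\alpha=0.99\zeta\ta\cdot(M_2/M_1)^2$; and the $(2,3)$ entry follows directly by pulling out $1/(1-\sigma^2)$. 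In row three, using Lemma \ref{lem-xk}, Lemma \ref{lem-var} and Corollary \ref{coro-var}, every $\alpha^2M_2^2L^2$ factor is at most $\alpha\cdot\mu M_1/200$ which is negligible, and $32.04+32B\le 33$, while the $(3,3)$ entry is controlled by $(1+\sigma^2)/2$ plus $16L^2\alpha^2M_2^2/(1-\sigma^2)$ plus $12\alpha M_2^2LB/((1-\sigma^2)M_1)$, all of which sum to at most $1-0.99(1-\sigma^2)/2$ under the stated bounds. The $H_\alpha$ entries are simpler since they are monotone and proportional to $B$; pulling out $\beta=16B$ and the step-size bound produces the required forms.

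The main obstacle is the bookkeeping for the diagonal entries $J_{\alpha,22}$ and $J_{\alpha,33}$, where the contractive factors $1-\tu\alpha$ and $(1+\sigma^2)/2$ must survive the addition of variance-reduction terms; this requires tracking that the leading slacks $0.03\zeta\ta$ and $0.01(1-\sigma^2)/2$ truly dominate the collected $B$- and $\alpha^2$-contributions, which is where the numerical constants in \eqref{ineq:stepsize} are calibrated. Once each entry-wise inequality is verified, the final implication is essentially free: since $J_\alpha,H_\alpha,J_{\alpha,\beta},H_{\alpha,\beta}$ are all nonnegative and $\bu^{tT}\ge 0$ componentwise, induction on $l$ gives $(J_\alpha)^l\le (J_{\alpha,\beta})^l$ entry-wise, hence $(J_\alpha)^TH_\alpha\le (J_{\alpha,\beta})^TH_{\alpha,\beta}$ and $\sum_{l=0}^{T-1}(J_\alpha)^lH_\alpha\le\sum_{l=0}^{T-1}(J_{\alpha,\beta})^lH_{\alpha,\beta}$, so that linear decay of the right-hand side in \eqref{q60} forces the same decay of $\bu^{(t+1)T}$ via the recursion \eqref{a51}.
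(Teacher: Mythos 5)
Your proposal follows essentially the same route as the paper's proof: rewrite every entry of $J_\alpha$ and $H_\alpha$ coming from \eqref{w55}, \eqref{a53} and \eqref{a54} in the alphabet $(\ta,\zeta,\beta,\gamma,\sigma)$ via $\alpha\mu M_1=\zeta\ta$ and $\alpha^2M_2^2L^2=\zeta\ta^2$, absorb the $\alpha^2$- and $B$-terms using the conditions in \eqref{ineq:stepsize}, and then transfer linear convergence through entry-wise monotonicity of the nonnegative matrices. Only a few harmless slips to fix: $\tu\alpha=0.99\zeta\ta$ exactly (your extra $(M_2/M_1)^2$ factor is spurious), the cross term in $J_\alpha{}_{33}$ is $\frac{48\alpha M_2^2LB}{(1-\sigma^2)M_1}$ rather than $12\alpha M_2^2LB/((1-\sigma^2)M_1)$, and $\frac{2}{L}\cdot\frac{2.02\alpha M_2^2L^2}{M_1}=4.04\ta\frac{\mu}{L}\le 4.04\ta$ is an inequality, not an equality; in each case the true quantity still satisfies the claimed bound, so the argument is unaffected.
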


\begin{proof}
It is easy to verify that the parameters satisfying  \eqref{ineq:stepsize} in Theorem \ref{thom-parameter} imply
\begin{equation}\label{w60}
\ta\leq \frac{(1-\sigma^2)^2}{200}\leq\frac{1}{200} \quad\text{and}\quad \beta\leq 0.1.
\end{equation}
By the definitions in Lemma \ref{lem-J}, we have
\begin{equation}\label{w61}
\alpha\mu M_1=\zeta \ta \quad \text{and}\quad \alpha^2M_2^2L^2=\zeta\ta^2\leq\ta^2.
\end{equation}

To get $J_{\alpha,\beta}$ and $H_{\alpha, \beta}$, we compute the upper bounds of all entries of $J_{\alpha}$ and $H_{\alpha}$ using \eqref{w60} and $\eqref{w61}$ as follows.  With \eqref{w55}, we have
\begin{align*}
&J_\alpha{_{12}}=0.011(1-\sigma^2)\alpha \mu M_1\gamma^2 \le 0.011(1-\sigma^2)\zeta \tilde{\alpha}\gamma^2 ,\\
&J_\alpha{_{13}}=0.02\alpha\mu M_1=0.02\zeta \tilde{\alpha},\\
&H_\alpha{_{11}}=H_\alpha{_{12}}=0.04(1-\sigma^2)\gamma^2\alpha\mu M_1\frac{B}{n}\le 0.01\gamma^2\tilde{\alpha}\beta(1-\sigma^2).
\end{align*}
With \eqref{a53}, since $\eta\leq 1.02$, we have
\begin{align*}
J_\alpha{_{21}}=&\frac{2}{L}\left(\frac{\alpha M_2^2\eta}{2nM_1} {4L^2}{B}+\frac{2.02\alpha M_2^2}{M_1}  L^2 \right)\leq 4.1\ta.
\end{align*}
Noticing that $\tu\alpha=0.99\zeta\ta$, we have
\begin{align*}
J_\alpha{_{22}}=&\frac{\alpha M_2^2\eta}{2nM_1} {8L}{B}+(1-\tilde{\mu}\alpha)\\
=&\frac{4\ta}{n}\cdot \frac{\mu}{L}B(\gamma^2+4\alpha LM_1) + 1 - 0.99\zeta\ta\\
\le& 0.03\zeta\ta + 1-0.99\zeta\ta \le 1 - 0.96\zeta\ta,\\
 J_\alpha{_{23}}=&\frac{0.51\alpha M_2^2L\gamma^2}{ M_1(1-\sigma^2)}\leq\frac{0.51\ta\gamma^2}{1-\sigma^2}.
\end{align*}
Using the fact that $\eta=\gamma^2+4\alpha LM_1$, we have
\begin{align*}
 H_\alpha{_{21}}=H_\alpha{_{22}}\leq&\frac{\alpha M_2^2\eta}{L M_1 n}\cdot 4L^2B\le 0.03\ta\zeta(1-\sigma^2)^2.
\end{align*}
With \eqref{a54}, we have
\begin{align*}
J_\alpha{_{31}}&= 32.04+32B+64\alpha^2M_2^2L^2\frac{B}{n}\leq 33,\\
J_\alpha{_{32}}&=32\alpha^2M_2^2L^2+32.16B+64\alpha^2M_2^2L^2\frac{B}{n}\\
&\le 0.162(1-\sigma^2)\tilde{\alpha}\zeta +2.01\beta \triangleq c,\\
J_\alpha{_{33}}&=\frac{1+\sigma^2}{2} +\frac{16L^2\alpha^2M_2^2}{1-\delta^2}+\frac{4}{1-\sigma^2}{4L^2}{B}\frac{3\alpha M_2^2}{L M_1}\\
&\le \frac{1+\sigma^2}{2}+\frac{0.08(1-\sigma^2)}{200}+\frac{3(1-\sigma^2)}{2000}\\
&\le 1-\frac{0.99(1-\sigma^2)}{2}, \\
H_\alpha{_{31}}&=H_\alpha{_{32}}=64\alpha^2M_2^2L^2\frac{B}{n}+32.16B\leq 2.03\beta.
\end{align*}
By replacing the  entries of $J_{\alpha} $ and $H_{\alpha}$ with their upper bounds above, we get $J_{\alpha,\beta}$ and $H_{\alpha,\beta}$ such that
$$J_{\alpha}\le J_{\alpha,\beta} ~ \text{and} ~ H_{\alpha}\le H_{\alpha,\beta}.$$

If $	\bu^{(t+1)T}\leq \left((J_{\alpha,\beta})^T+\sum_{t=0}^{T-1}(J_{\alpha,\beta})^lH_{\alpha,\beta}\right)\bu^{tT}$ converges linearly,  $\bu^{(t+1)T}\leq \left((J_{\alpha})^T+\sum_{t=0}^{T-1}(J_\alpha)^lH_\alpha\right)\bu^{tT}$ also converges linearly since all the terms are non-negative. This completes the proof.
\end{proof}
In the following, we will show that $$	\bu^{(t+1)T}\leq \left((J_{\alpha,\beta})^T+\sum_{t=0}^{T-1}(J_{\alpha,\beta})^lH_{\alpha,\beta}\right)\bu^{tT}$$ converges linearly. Since $J_{\alpha,\beta}$ is non-negative, we have that $$\sum_{l=0}^{T-1} (J_{\alpha,\beta})^{l} \leq \sum_{l=0}^{\infty} (J_{\alpha,\beta})^{l}=\left(I_3-J_{\alpha,\beta}\right)^{-1}.$$
Therefore, to prove $$\bu^{(t+1)T}\leq \left((J_{\alpha,\beta})^T+\sum_{t=0}^{T-1}(J_{\alpha,\beta})^lH_{\alpha,\beta}\right)\bu^{tT}$$ converges linearly, it is sufficient to prove that
$$\mathbf{u}^{(t+1) T} \leq\left((J_{\alpha,\beta})^{T}+\left(I_3-J_{\alpha,\beta}\right)^{-1} H_{\alpha,\beta}\right) \mathbf{u}^{t T}
$$ converges linearly.
The rest of the convergence analysis is to derive the conditions on the parameters $\alpha$, $B$ and $T$, such that the following inequality holds
$$
\rho\left((J_{\alpha,\beta})^{T}+\left(I_3-J_{\alpha,\beta}\right)^{-1} H_{\alpha,\beta}\right)<1.
$$
To do so, we will bound the spectral radius of $J_{\alpha,\beta}$ and $\left(I_3-J_{\alpha,\beta}\right)^{-1} H_{\alpha,\beta}$, respectively.
The following lemma in \cite{horn2012matrix} is a useful tool to bound the spectral radius of a matrix.
\begin{lemma}\label{lem-radius}
	Let $A \in \mathbb{R}^{d \times d}$ be non-negative and $x \in \mathbb{R}^{d}$ be positive. If $A x \leq \beta x$ for $\beta>0,$ then $\rho(A) \leq\| A\|_{\infty}^{x} \leq \beta$.
\end{lemma}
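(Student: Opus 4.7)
The plan is to split the chain of inequalities into two independent pieces: the norm bound $\|A\|_{\infty}^{x} \le \beta$, which uses the hypothesis and the nonnegativity of $A$, and the spectral-radius bound $\rho(A) \le \|A\|_{\infty}^{x}$, which is a general property of any induced matrix norm.

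First I would establish $\|A\|_{\infty}^{x} \le \beta$ directly from the definition of the induced weighted infinity norm. It suffices to show that $\|Ay\|_{\infty}^{x} \le \beta \|y\|_{\infty}^{x}$ for every $y \in \mathbb{R}^d$. Setting $c = \|y\|_{\infty}^{x}$, we have $|y_j| \le c\, x_j$ componentwise. Because every entry of $A$ is nonnegative, the usual triangle inequality sharpens to $|(Ay)_i| \le \sum_j A_{ij} |y_j| \le c\, (Ax)_i$. The hypothesis $Ax \le \beta x$ then converts this to $|(Ay)_i| \le c\beta x_i$; dividing by $x_i > 0$ and taking the maximum over $i$ yields $\|Ay\|_{\infty}^{x} \le \beta \|y\|_{\infty}^{x}$, hence $\|A\|_{\infty}^{x} \le \beta$.

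Second, I would invoke the standard comparison between spectral radius and any induced operator norm. Picking an eigenpair $(\lambda, v)$ with $|\lambda| = \rho(A)$ and $v \neq 0$, the identity $Av = \lambda v$ together with homogeneity of $\|\cdot\|_{\infty}^{x}$ gives $|\lambda| \, \|v\|_{\infty}^{x} = \|Av\|_{\infty}^{x} \le \|A\|_{\infty}^{x} \|v\|_{\infty}^{x}$, so $\rho(A) \le \|A\|_{\infty}^{x}$. Chaining the two bounds produces $\rho(A) \le \|A\|_{\infty}^{x} \le \beta$.

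There is essentially no obstacle: both steps are textbook, and the whole argument is a two-line application of the definition of the induced norm plus the standard eigenvalue--norm inequality. The only minor subtlety is that $\lambda$ and $v$ could be complex while $\|\cdot\|_{\infty}^{x}$ was defined for real vectors; I would handle this either by extending the norm to $\mathbb{C}^d$ via entrywise moduli (which preserves homogeneity and the submultiplicative bound), or by appealing to the Perron--Frobenius theorem to select a real nonnegative eigenvector attaining $\rho(A)$, in which case the real-valued norm applies verbatim.
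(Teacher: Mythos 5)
Your proof is correct. The paper gives no proof of this lemma at all --- it is quoted from Horn and Johnson --- and your argument (showing $\|A\|_{\infty}^{x}\leq\beta$ directly from entrywise nonnegativity of $A$ and the hypothesis $Ax\leq\beta x$, then invoking $\rho(A)\leq\|A\|_{\infty}^{x}$ for the induced norm, with the complex-eigenvector issue correctly dispatched either by extending the norm to $\mathbb{C}^{d}$ or via the Perron--Frobenius nonnegative eigenvector) is precisely the standard textbook proof that the citation refers to.
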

The following lemma bounds the spectral radius of $J_{\alpha,\beta}$.
\begin{lemma}\label{lem-J}
Under Assumptions \ref{asm-lips}--\ref{asm-Hbound}, when the parameters satisfy the conditions in Theorem \ref{thom-parameter}, we have
\begin{equation}\label{g70}
J_{\alpha, \beta} \cdot\bz\le \left(1-\frac{\zeta\ta}{2}\right)\bz, ~ \bz=[1;z_2;z_3],
\end{equation}
where
\begin{equation}\label{g71}
z_2=\frac{10}{\zeta}+\frac{1.2\gamma^2z_3}{\zeta(1-\sigma^2)}, \quad z_3=\frac{200(\zeta+\beta)}{\zeta(1-\sigma^2)},
\end{equation}
and thus
\begin{equation}\label{i77}
\rho(J_{\alpha,\beta})\le \| J_{\alpha,\beta} \|_{\infty}^{\bf z} \le  1-\frac{\zeta\ta}{2}.
\end{equation}
\end{lemma}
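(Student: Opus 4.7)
The plan is to apply \autoref{lem-radius} with $A=J_{\alpha,\beta}$ and the positive test vector $\bz=[1;z_2;z_3]$ defined in \eqref{g71}. Once the entrywise inequality \eqref{g70} is verified, the spectral-radius bound \eqref{i77} is immediate because $\|J_{\alpha,\beta}\|_{\infty}^{\bz} \le 1-\zeta\ta/2$ by definition of the weighted infinity norm. So the whole argument reduces to checking three scalar inequalities, one per row of $J_{\alpha,\beta}\bz \le (1-\zeta\ta/2)\bz$, and the natural order is bottom-up: first determine the constraint on $z_3$ from the third (gradient-tracking) row, then the constraint on $z_2$ from the second (optimality-gap) row using that $z_3$, and finally verify that the first (consensus) row holds with both choices.

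For the third row, the inequality $33 + c z_2 + (1-\tfrac{0.99(1-\sigma^2)}{2}) z_3 \le (1-\zeta\ta/2) z_3$ rearranges to $(\tfrac{0.99(1-\sigma^2)}{2}-\tfrac{\zeta\ta}{2}) z_3 \ge 33 + c z_2$. Since $\zeta\ta \le (1-\sigma^2)^2/200$ by \eqref{w60}, the coefficient of $z_3$ on the left is essentially $(1-\sigma^2)/2$; the term $c z_2$ is dominated by $2.01\beta z_2$, in which $\beta z_2$ contains the ratio $\beta/\zeta$ controlled by the sampling-rate bound in \eqref{ineq:stepsize}. Choosing $z_3 = 200(\zeta+\beta)/(\zeta(1-\sigma^2))$ — exactly the value in \eqref{g71} — closes the inequality with a small slack proportional to $\zeta\ta$. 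For the second row, substituting this $z_3$ turns the requirement into $0.46\zeta\ta z_2 \ge 4.1\ta + 0.51\ta\gamma^2 z_3/(1-\sigma^2)$ after absorbing the $(1-\zeta\ta/2)z_2 - (1-0.96\zeta\ta)z_2$ gap, which forces $z_2 \gtrsim 10/\zeta + 1.2\gamma^2 z_3/(\zeta(1-\sigma^2))$, again matching \eqref{g71}.

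The first row is the most delicate step and is where I expect the main obstacle to be: it demands $0.011(1-\sigma^2)\zeta\ta\gamma^2 z_2 + 0.02\zeta\ta z_3 \le \tfrac{0.99(1-\sigma^2)}{2} - \tfrac{\zeta\ta}{2}$, with the right-hand side only of order $(1-\sigma^2)$. After plugging in the explicit forms of $z_2$ and $z_3$, each summand on the left is of order $\ta$ multiplied by a bounded function of $\beta/\zeta$ and $\gamma/(1-\sigma^2)$, so the tightness comes from bookkeeping the numerical constants. I would handle it by bounding the two left-hand summands separately against, say, $(1-\sigma^2)/8$ each, exploiting $\gamma\le 1$, $\zeta\le 1$, $\ta\le(1-\sigma^2)^2/200$, and $\beta\le \zeta(1-\sigma^2)^2/10$ from Theorem \ref{thom-parameter}, which together guarantee enough slack. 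With all three row inequalities established, \autoref{lem-radius} yields \eqref{i77} and finishes the proof.
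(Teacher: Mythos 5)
Your proposal is correct and takes essentially the same route as the paper: reduce \eqref{g70} to the three scalar row inequalities, verify them for the stated $z_2,z_3$ using $\zeta\le 1$, $\ta\le(1-\sigma^2)^2/200$, $\beta\le 1/10$ and $\gamma^2\beta\le \zeta(1-\sigma^2)^2/10$, then invoke Lemma \ref{lem-radius} to get \eqref{i77}. One small nit: the condition in Theorem \ref{thom-parameter} gives $\gamma^2\beta\le\zeta(1-\sigma^2)^2/10$ (plus $\beta\le 1/10$) rather than $\beta\le\zeta(1-\sigma^2)^2/10$ as you wrote, but since $\beta$ enters your first- and third-row estimates only through $\gamma^2 z_2$ or in places where $\beta\le 1/10$ suffices, your constant bookkeeping still closes.
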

\begin{proof}
To prove \eqref{g70}, by substituting the definitions of $J_{\alpha,\beta}$ and $H_{\alpha,\beta}$, we know it is sufficient to prove
\begin{align}\label{g72}
0.011(1-\sigma^2)\zeta\ta\gamma^2 z_2 + 0.02\zeta\ta z_3 & \le\!\frac{0.99(1-\sigma^2)}{2}\!-\!\frac{\zeta\ta}{2}, \notag\\
4.1\ta+\frac{0.51\ta\gamma^2}{1-\sigma^2} z_3 & \le 0.46\zeta\ta z_2, \notag\\
33+c z_2 & \le 0.49(1-\sigma^2)z_3,
\end{align}
where we use $0.49(1-\sigma^2)z_3\le (\frac{0.99(1-\sigma^2)-\zeta\tilde{\alpha}}{2})z_3$.

With $\zeta\leq 1$ and $\ta< \frac{(1-\sigma^2)^2}{200}$, the first inequality in \eqref{g72} holds since
$$
z_2=\frac{10}{\zeta}+\frac{240\gamma^2(\zeta+\beta)}{\zeta^2(1-\sigma^2)^2}\leq \frac{10}{\zeta}+\frac{240\cdot 1.1\zeta}{\zeta^2(1-\sigma^2)^2}\leq \frac{280}{\zeta(1-\sigma^2)^2},
$$
where we use $\gamma^2\beta\leq \frac{\zeta(1-\sigma^2)^2}{10}$ and
$$
z_3\leq\frac{200(1+1/10)}{\zeta(1-\sigma^2)}\leq \frac{220}{\zeta(1-\sigma^2)},
$$
so that
\begin{align*}
&-\frac{0.99(1-\sigma^2)}{2}+0.011(1-\sigma^2)\zeta\ta\gamma^2 z_2 + 0.02\zeta\ta z_3 +\frac{\zeta\ta}{2}\\
\le&-0.495(1-\sigma^2)+0.06(1-\sigma^2)+\frac{1-\sigma^2}{400}\le 0.
\end{align*}
The second inequality in \eqref{g72} holds since
$$ 0.46\zeta z_2=0.46\zeta(\frac{10}{\zeta}+\frac{1.2\gamma^2z_3}{\zeta(1-\sigma^2)})\ge 4.6+\frac{0.55\gamma^2z_3}{1-\sigma^2}.$$
With $c=2.01\beta+0.162(1-\sigma^2)\zeta\ta$, the third inequality in \eqref{g72} holds since
$$
c z_2<2.01\beta z_2 + 0.162(1-\sigma^2)\zeta\ta\frac{280}{\zeta(1-\sigma^2)^2}\leq 2.01\beta z_2 + 0.3,$$
and
$$2.01\beta z_2=\frac{20.1\beta}{\zeta}+2.01\frac{1.2\gamma^2\beta z_3}{\zeta(1-\sigma^2)}\leq \frac{20.1\beta}{\zeta}+0.25(1-\sigma^2)z_3.
$$
where we use $\gamma^2\beta\leq \frac{\zeta(1-\sigma^2)^2}{10}$ again, so that
\begin{align*}
	&33+cz_2\leq33.3+\frac{20.1\beta}{\zeta}+0.25(1-\sigma^2)z_3\\
	&\leq \frac{33.3(\zeta+\beta)}{\zeta}+0.25(1-\sigma^2)z_3\leq 0.49(1-\sigma^2)z_3.
	\end{align*}
Thus, \eqref{g70} holds. By Lemma \ref{lem-radius}, we get \eqref{i77} and complete the proof.
\end{proof}

The following lemma bounds the determinant of $I_3-J_{\alpha,\beta}$, which will be used to compute  $\left(I_3-J_{\alpha,\beta}\right)^{-1}$.
\begin{lemma}\label{lem-det}
Under Assumptions \ref{asm-lips}--\ref{asm-Hbound}, when the parameters satisfy the conditions in Theorem \ref{thom-parameter}, the determinant of matrix $I_3-J_{\alpha,\beta}$ satisfies
\begin{equation}
\det(I_3-J_{\alpha,\beta})\ge \frac{(1-\sigma^2)^2\zeta\ta}{6}.
\end{equation}
\end{lemma}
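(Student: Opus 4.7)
The plan is to evaluate $\det(I_3 - J_{\alpha,\beta})$ directly by cofactor expansion along the first row, isolate the single product which is of leading order $\zeta\ta(1-\sigma^2)^2$, and then show that the five remaining terms, each of which carries at least one additional small factor ($\ta$, $\beta$, or $\gamma^2/(1-\sigma^2)$), can be absorbed. The only estimates I will need are the ones already established in the preceding lemma: $\ta \le (1-\sigma^2)^2/200$, $\beta \le 0.1$, $\gamma^2\beta \le \zeta(1-\sigma^2)^2/10$, $\gamma^2 \le 1$, and $\zeta \le 1$.

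Concretely, cofactor expansion yields six products. The leading (positive) term is the product of the three diagonal entries of $I_3-J_{\alpha,\beta}$:
\begin{equation*}
\tfrac{0.99(1-\sigma^2)}{2}\cdot 0.96\zeta\ta\cdot \tfrac{0.99(1-\sigma^2)}{2} \;=\; \tfrac{0.99^2\cdot 0.96}{4}\,\zeta\ta(1-\sigma^2)^2 \;>\; 0.23\,\zeta\ta(1-\sigma^2)^2,
\end{equation*}
which already exceeds the target $\zeta\ta(1-\sigma^2)^2/6 \approx 0.167\,\zeta\ta(1-\sigma^2)^2$. Each of the remaining five products contains at least one extra power of $\ta$ or $\beta$: the product involving the off-diagonal $(2,3)$ and $(3,2)$ entries yields a term in $\ta\gamma^2\beta/(1-\sigma^2)$ (controlled by $\gamma^2\beta\le \zeta(1-\sigma^2)^2/10$) plus a term in $\ta^2\gamma^2\zeta$ (controlled by $\ta\le(1-\sigma^2)^2/200$); the products involving the $(1,2)$ entry $0.011(1-\sigma^2)\zeta\ta\gamma^2$ inherit a factor of $\zeta\ta$ and are likewise of order $\ta^2$ or $\ta\beta$; and the products involving the $(1,3)$ entry $0.02\zeta\ta$ multiplied against $h=2.01\beta+0.162(1-\sigma^2)\zeta\ta$ or against $eg = 0.96\cdot 33\,\zeta\ta$ are again of order $\zeta^2\ta^2$ or $\zeta\ta\beta$. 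Applying the parameter bounds termwise, each such term can be shown to be at most a very small constant times $\zeta\ta(1-\sigma^2)^2$, and summing them gives a loss well below $0.23-0.167 \approx 0.063$.

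The main obstacle is not conceptual but purely arithmetic: one must compute the six monomials in the variables $\{\ta,\beta,\gamma^2,\zeta,(1-\sigma^2)^{\pm 1}\}$ and reduce each using the three key bounds in a way that keeps enough numerical slack to reach the $1/6$ constant. Once this accounting is done, subtracting the bounded negative terms from the leading $0.23\,\zeta\ta(1-\sigma^2)^2$ leaves at least $\zeta\ta(1-\sigma^2)^2/6$, which is exactly the claim.
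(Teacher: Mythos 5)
Your route --- cofactor expansion of $\det(I_3-J_{\alpha,\beta})$ along the first row, keeping the diagonal product $0.495(1-\sigma^2)\cdot 0.96\zeta\ta\cdot 0.495(1-\sigma^2)\approx 0.235\,\zeta\ta(1-\sigma^2)^2$ as the leading term and absorbing the other five products via $\ta\le(1-\sigma^2)^2/200$, $\beta\le 0.1$ and $\gamma^2\beta\le \zeta(1-\sigma^2)^2/10$ (which gives $\gamma^2 c\le 0.202\,\zeta(1-\sigma^2)^2$) --- is exactly the paper's proof, which lands at $0.173\,\zeta\ta(1-\sigma^2)^2>\zeta\ta(1-\sigma^2)^2/6$. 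One caution on the deferred arithmetic: the single term $0.495\cdot 0.51\,\ta\gamma^2 c\le 0.051\,\zeta\ta(1-\sigma^2)^2$ consumes most of the roughly $0.068$ of available slack, so the total loss is not ``well below'' the margin but only comfortably within it; the remaining four terms are indeed negligible and the bound goes through.
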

\begin{proof}
By the definition of $J_{\alpha,\beta}$, we compute the determinant of $I_3-J_{\alpha,\beta}$ as
\begin{equation}\label{g77}
\begin{aligned}
&\det(I_3-J_{\alpha,\beta})\\\nonumber
&= 0.495(1-\sigma^2)\left(0.96\zeta\ta \cdot 0.495(1-\sigma^2)-\frac{0.51\ta \gamma^2c }{1-\sigma^2}\right)\\
&+0.011(1-\sigma^2)\zeta\ta\gamma^2\left(-4.1\ta \times  0.495(1-\sigma^2)-\frac{16.83\ta\gamma^2 }{1-\sigma^2}\right)\\
&-0.02\zeta\ta\left(4.1\ta c+33\times 0.96\zeta \ta\right)\\
&\ge0.173(1-\sigma^2)^2\zeta\ta> \frac{(1-\sigma^2)^2\zeta\ta}{6},
\end{aligned}
\end{equation}
where we use \eqref{w60}, $c=2.01\beta+0.162(1-\sigma^2)\zeta\ta\leq 0.21$ and $\gamma^2c\leq 0.202\zeta(1-\sigma^2)^2$ in the first inequality. Thus, we complete the proof.
\end{proof}

\begin{theorem}\label{theo-linear}
Under Assumptions \ref{asm-lips}--\ref{asm-Hbound}, when the parameters satisfy the conditions in Theorem \ref{thom-parameter}, we have
\begin{equation}\label{h80}
\ta\le \frac{(1-\sigma^2)^2}{200}, ~ \beta\le \frac{1}{10}\min\left\{1, \frac{(1-\sigma^2)^2\zeta}{\gamma^2}\right\},
\end{equation}
and
\begin{equation}
	T\ge\frac{2\log(280/(\zeta(1-\sigma^2)^2))}{\zeta\ta}.
\end{equation}
Thus, for $\bq=[1; 10; \frac{200(\zeta+\beta)}{1-\sigma^2}]$, we have
\begin{equation}\label{h81}
\|\bu^{(t+1)T}\|_{\infty}^{\bf q}\leq  0.9  \|\bu^{tT}\|_{\infty}^{\bf q}.
\end{equation}
\end{theorem}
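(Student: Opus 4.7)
The plan is to reduce the statement to a componentwise inequality on the comparison vector $\bq$. Concretely, because both matrices appearing in Proposition \ref{prop-linear} are entrywise non-negative and $\bu^{tT} \le \|\bu^{tT}\|_\infty^{\bf q}\, \bq$ by definition of the weighted infinity norm, it suffices to establish
\begin{equation*}
\left((J_{\alpha,\beta})^{T} + (I_3-J_{\alpha,\beta})^{-1} H_{\alpha,\beta}\right) \bq \;\le\; 0.9\, \bq
\end{equation*}
componentwise; the contraction $\|\bu^{(t+1)T}\|_\infty^{\bf q} \le 0.9 \|\bu^{tT}\|_\infty^{\bf q}$ then follows immediately. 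I would bound the two summands separately against the auxiliary vector $\bz$ from Lemma \ref{lem-J}, which we already know is a near-eigenvector of $J_{\alpha,\beta}$, and then translate back to $\bq$ using the componentwise relation $\bq \le \bz$ (since $\zeta \le 1$ forces $z_2 \ge 10/\zeta \ge q_2$ and $z_3 = q_3/\zeta \ge q_3$, while $z_1 = q_1 = 1$).

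For the first summand, by non-negativity of $J_{\alpha,\beta}$ and iterating the inequality $J_{\alpha,\beta} \bz \le (1-\zeta\ta/2)\bz$ from Lemma \ref{lem-J} a total of $T$ times, I would write $(J_{\alpha,\beta})^T \bq \le (1-\zeta\ta/2)^T \bz \le e^{-\zeta\ta T/2}\bz$, so that the lower bound $T \ge 2\log(280/(\zeta(1-\sigma^2)^2))/(\zeta\ta)$ gives $(J_{\alpha,\beta})^T \bq \le (\zeta(1-\sigma^2)^2/280)\,\bz$. For the second summand, I would exploit that $(I_3-J_{\alpha,\beta})^{-1} = \sum_{l=0}^\infty (J_{\alpha,\beta})^l$ is entrywise non-negative, so that the same geometric series argument yields $(I_3-J_{\alpha,\beta})^{-1}\bz \le (2/(\zeta\ta))\,\bz$. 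It then remains to find a small scalar $c$ with $H_{\alpha,\beta}\bq \le c\,\bz$; direct computation of $H_{\alpha,\beta}\bq$ is straightforward since the third column of $H_{\alpha,\beta}$ is zero and only $q_1=1, q_2=10$ contribute, and the constraint $\beta \le (1/10)\min\{1,(1-\sigma^2)^2\zeta/\gamma^2\}$ ensures that a suitable $c=\mathcal{O}(\beta)$ works. Alternatively, one may use Cramer's rule together with the determinant bound $\det(I_3-J_{\alpha,\beta}) \ge (1-\sigma^2)^2\zeta\ta/6$ from Lemma \ref{lem-det} to bound the entries of the inverse directly.

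Combining the two bounds gives an estimate of the form $((J_{\alpha,\beta})^T + (I_3-J_{\alpha,\beta})^{-1} H_{\alpha,\beta})\bq \le (\zeta(1-\sigma^2)^2/280 + 2c/(\zeta\ta))\bz$, and the final componentwise comparison with $0.9\bq$ is carried out using the explicit expressions for $\bz$ and $\bq$ together with the stepsize bound $\ta \le (1-\sigma^2)^2/200$ and the non-sampling rate bound on $\beta$.

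The main obstacle is the last step: the three components of $\bz$ differ from the corresponding components of $\bq$ by different factors (a factor of $1$ in coordinate one, approximately $1/\zeta$ in coordinates two and three, plus an extra $\gamma^2 z_3/(1-\sigma^2)$ contribution in coordinate two), so one cannot use a uniform scalar comparison. The verification of $M\bq \le 0.9\bq$ therefore has to be done coordinate-by-coordinate, and each of the three inequalities needs to absorb a different combination of the residual terms coming from $(J_{\alpha,\beta})^T \bq$ and $(I_3-J_{\alpha,\beta})^{-1}H_{\alpha,\beta}\bq$. The constants appearing in Lemma \ref{lem-J} (for $z_2$ and $z_3$) and in the hypotheses \eqref{h80} on $\ta$ and $\beta$ are precisely what make the three arithmetic verifications go through with margin to spare.
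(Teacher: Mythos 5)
Your treatment of the first summand matches the paper: $(J_{\alpha,\beta})^T\bq\le(J_{\alpha,\beta})^T\bz\le e^{-\zeta\ta T/2}\bz$ together with $\bz\le\frac{28}{\zeta(1-\sigma^2)^2}\bq$ and the lower bound on $T$ gives the $0.1$-contribution, exactly as in the paper's proof. The gap is in the second summand. You propose to bound $H_{\alpha,\beta}\bq\le c\,\bz$ by a scalar $c=\mathcal{O}(\beta)$ and then use the geometric-series estimate $(I_3-J_{\alpha,\beta})^{-1}\bz\le\frac{2}{\zeta\ta}\bz$. But the third component of $H_{\alpha,\beta}\bq$ equals $11\cdot 2.03\beta\approx 22.3\beta$ and carries \emph{no} factor of $\ta$, so any scalar $c$ with $H_{\alpha,\beta}\bq\le c\,\bz$ must satisfy $c\ge \frac{22.3\,\beta\,\zeta(1-\sigma^2)}{200(\zeta+\beta)}$, again with no factor of $\ta$. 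The first coordinate of your combined bound is then at least $\frac{2c}{\zeta\ta}\gtrsim\frac{\beta(1-\sigma^2)}{(\zeta+\beta)\,\ta}$, which must be compared with $0.9\,q_1=0.9$; since Theorem \ref{thom-parameter} only imposes an \emph{upper} bound on $\alpha$ (hence on $\ta$) and no lower bound relative to $\beta$, this quantity blows up as $\ta\to 0$ for any fixed $\beta>0$. So the proposed route cannot certify the uniform contraction factor $0.9$.

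The structural fact your argument discards is that the second row of $I_3-J_{\alpha,\beta}$ is entirely $\mathcal{O}(\ta)$, so every entry of the relevant columns of $\operatorname{adj}(I_3-J_{\alpha,\beta})$ contains a factor $\ta$ which cancels against $\det(I_3-J_{\alpha,\beta})\ge\frac{(1-\sigma^2)^2\zeta\ta}{6}$ (Lemma \ref{lem-det}); this is why the paper obtains the $\ta$-free bound $(I_3-J_{\alpha,\beta})^{-1}H_{\alpha,\beta}\,\bq\le 0.8\,\bq$ by explicitly multiplying the adjugate bound by $H_{\alpha,\beta}\bq$ and only then dividing by the determinant. Replacing $(I_3-J_{\alpha,\beta})^{-1}$ by the crude row-sum bound $\frac{2}{\zeta\ta}$ along $\bz$ destroys this cancellation. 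Your parenthetical ``alternatively, use Cramer's rule with the determinant bound'' is indeed the paper's (and the correct) route, but it is the main argument, not an optional variant: you would need to carry out the entrywise adjugate estimate and the multiplication $\operatorname{adj}(I_3-J_{\alpha,\beta})H_{\alpha,\beta}\bq$ before dividing by $\det(I_3-J_{\alpha,\beta})$, and only then add the $0.1$ from the $(J_{\alpha,\beta})^T$ term to reach $0.9$.
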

\begin{proof}
The adjoint matrix of $(I_3-J_{\alpha,\beta})$ satisfies
\begin{align}\label{g78}
&\operatorname{adj} (I_3-J_{\alpha,\beta})\\
\hspace{-1.5em}\leq &
\begin{bmatrix}
0.5\zeta \ta &0.011\zeta \ta & 0.03\zeta\ta^2\\
\frac{19\ta}{1-\sigma^2}&0.26(1-\sigma^2)^2 & 0.26\ta((1-\sigma^2)^2\zeta+\gamma^2)\\
33.1\ta(\zeta+\beta)& (1-\sigma^2)(\beta+\zeta)& 0.5(1-\sigma^2)\zeta\ta
\end{bmatrix},\nonumber
\end{align}
where we use \eqref{w60} in the inequality.
By the definitions of $H_{\alpha,\beta}$ and $\bq$, we compute
\begin{align}\label{w70}
H_{\alpha,\beta}\cdot\bq =& (1+q_2) \begin{bmatrix}
0.01\ta\beta\gamma^2 \\ 0.03\ta\zeta(1-\sigma^2)^2 \\2.03\beta
\end{bmatrix}\\
\leq& 11 \begin{bmatrix}
0.001\ta\zeta(1-\sigma^2)^2 \\ 0.03\ta\zeta(1-\sigma^2)^2 \\2.03\beta
\end{bmatrix}.\nonumber
\end{align}
With \eqref{g78} and \eqref{w70}, we have
\begin{align}
\hspace{-1em}\operatorname{adj} (I_3-J_{\alpha,\beta}) H_{\alpha,\beta} \cdot \bq
\leq 11\!\begin{bmatrix}
0.01\zeta\ta^2\\0.12\ta\zeta(1-\sigma^2)^2\\1.5\ta(1-\sigma^2)\zeta(\zeta+\beta)
\end{bmatrix},
\end{align}
where we use the fact that $((1-\sigma^2)^2\zeta+\gamma^2)\beta\leq0.2(1-\sigma^2)^2\zeta$. Thus, by substituting Lemma \ref{lem-det}, we have
\begin{align}\label{w72}
&(I_3-J_{\alpha,\beta})^{-1}H_{\alpha,\beta} \cdot\bq\\
\leq& \frac{11\ta\zeta(1-\sigma^2)^2}{\det(I_3-J_{\alpha,\beta})} \begin{bmatrix}
0.0001\\0.12\\\frac{1.5(\zeta+\beta)}{1-\sigma^2}
\end{bmatrix}\nonumber\\
\leq& 66\!
\begin{bmatrix}
0.0001\\0.12\\\frac{1.5(\zeta+\beta)}{1-\sigma^2}
\end{bmatrix}
\leq 0.8\begin{bmatrix}
	1\\10\\\frac{200(\zeta+\beta)}{1-\sigma^2}
\end{bmatrix}=0.8\bq. \nonumber
\end{align}

Lemma \ref{lem-radius} and \eqref{w72} imply that
\begin{equation}\label{i87}
\begin{split}
\rho\left((I_3-J_{\alpha,\beta})^{-1}H_{\alpha,\beta}\right)
\leq 0.8.
\end{split}
\end{equation}
Since $\frac{10}{\zeta}\leq z_2\leq \frac{280}{\zeta(1-\sigma^2)^2}$, we have
$$
\bq\leq\bz\leq \frac{28}{\zeta(1-\sigma^2)^2}\cdot\bq.
$$
According to \cite{horn2012matrix}, this yields
$$
\| (J_{\alpha})^T\|_{\infty}^{\bq}\leq \frac{28}{\zeta(1-\sigma^2)^2}\cdot\| (J_{\alpha})^T\|_{\infty}^{\bz}.
$$
Taking norm $\|\cdot\|^{\bq}_{\infty}$ on both sides of \eqref{q60}, and then  substituting \eqref{i77} and \eqref{i87}, we have
\begin{equation*}
\begin{aligned}
\|\bu^{(t+1)T}\|_{\infty}^{\bf q}&\leq  \| (J_\alpha)^T+\sum_{t=0}^{T-1}(J_\alpha)^lH_\alpha\|_{\infty}^{\bf q}\cdot\|\bu^{tT}\|_{\infty}^{\bf q} \\
&\leq\left(\| (J_{\alpha})^{T}\|_{\infty}^{\mathbf{q}}+0.8\right)\left\|\mathbf{u}^{t T}\right\|_{\infty}^{\mathbf{q}} \\
& \leq\left(\frac{28}{\zeta(1-\sigma^2)^2}\cdot\left(\| J_{\alpha}\|_{\infty}^{\bz}\right)^{T}+0.8\right)\left\|\mathbf{u}^{t T}\right\|_{\infty}^{\mathbf{q}} \\
& \leq\left( \frac{28}{\zeta(1-\sigma^2)^2}\cdot\exp \left\{-\frac{\zeta\ta T}{2} \right\}+0.8\right)\left\|\mathbf{u}^{t T}\right\|_{\infty}^{\mathbf{q}},
\end{aligned}
\end{equation*}
where we use the fact that $\|( J_{\alpha})^T\|_{\infty}^{\bz}\le\left(\| J_{\alpha}\|_{\infty}^{\bz}\right)^{T}$ and $1+a \leq \exp \{a\}, \forall a \in \mathbb{R}$. By setting $T\ge\frac{2\log(280/(\zeta(1-\sigma^2)^2))}{\zeta\ta}$ in the last inequality above, we get \eqref{h81} and complete the proof.
\end{proof}
By Theorem \ref{theo-linear}, we know that  Algorithm \ref{alg-framework} converges linearly to the optimum at the rate of 0.9, which is a constant.
\section{Conclusions}
The aim of this work is to develop viable stochastic quasi-Newton methods for decentralized learning.  In Part I, we develop a general algorithmic framework where each node adopts a local inexact quasi-Newton direction that approaches the global one asymptotically. To be specific, each node uses gradient tracking to estimate the average of variance-reduced local stochastic gradients, and then constructs a local Hessian inverse approximation by the DFP and BFGS methods without incurring extra sampling or communication. Under the assumption that the local Hessian inverse approximation is positive definite with bounded eigenvalues, we prove that the general framework converges  linearly  to the exact solution.

% references section

\bibliographystyle{IEEEtran}
\bibliography{IEEEabrv_PnADMM}

% Generated by IEEEtran.bst, version: 1.12 (2007/01/11)
\begin{thebibliography}{10}
\providecommand{\url}[1]{#1}
\csname url@samestyle\endcsname
\providecommand{\newblock}{\relax}
\providecommand{\bibinfo}[2]{#2}
\providecommand{\BIBentrySTDinterwordspacing}{\spaceskip=0pt\relax}
\providecommand{\BIBentryALTinterwordstretchfactor}{4}
\providecommand{\BIBentryALTinterwordspacing}{\spaceskip=\fontdimen2\font plus
\BIBentryALTinterwordstretchfactor\fontdimen3\font minus
  \fontdimen4\font\relax}
\providecommand{\BIBforeignlanguage}[2]{{%
\expandafter\ifx\csname l@#1\endcsname\relax
\typeout{** WARNING: IEEEtran.bst: No hyphenation pattern has been}%
\typeout{** loaded for the language `#1'. Using the pattern for}%
\typeout{** the default language instead.}%
\else
\language=\csname l@#1\endcsname
\fi
#2}}
\providecommand{\BIBdecl}{\relax}
\BIBdecl

\bibitem{lian2017can}
X.~Lian, C.~Zhang, H.~Zhang, C.-J. Hsieh, W.~Zhang, and J.~Liu, ``Can
  decentralized algorithms outperform centralized algorithms? a case study for
  decentralized parallel stochastic gradient descent,'' in \emph{Proceedings of
  the 31st International Conference on Neural Information Processing Systems},
  2017, pp. 5336--5346.

\bibitem{assran2019stochastic}
M.~Assran, N.~Loizou, N.~Ballas, and M.~Rabbat, ``Stochastic gradient push for
  distributed deep learning,'' in \emph{International Conference on Machine
  Learning}.\hskip 1em plus 0.5em minus 0.4em\relax PMLR, 2019, pp. 344--353.

\bibitem{amiri2020machine}
M.~M. Amiri and D.~G{\"u}nd{\"u}z, ``Machine learning at the wireless edge:
  Distributed stochastic gradient descent over-the-air,'' \emph{IEEE
  Transactions on Signal Processing}, vol.~68, pp. 2155--2169, 2020.

\bibitem{lee2020optimization}
D.~Lee, N.~He, P.~Kamalaruban, and V.~Cevher, ``Optimization for reinforcement
  learning: From a single agent to cooperative agents,'' \emph{IEEE Signal
  Processing Magazine}, vol.~37, no.~3, pp. 123--135, 2020.

\bibitem{brisimi2018federated}
T.~S. Brisimi, R.~Chen, T.~Mela, A.~Olshevsky, I.~C. Paschalidis, and W.~Shi,
  ``Federated learning of predictive models from federated electronic health
  records,'' \emph{International Journal of Medical Informatics}, vol. 112, pp.
  59--67, 2018.

\bibitem{warnat2021swarm}
S.~Warnat-Herresthal, H.~Schultze, K.~L. Shastry, S.~Manamohan, S.~Mukherjee,
  V.~Garg, R.~Sarveswara, K.~H{\"a}ndler, P.~Pickkers, N.~A. Aziz
  \emph{et~al.}, ``Swarm learning for decentralized and confidential clinical
  machine learning,'' \emph{Nature}, vol. 594, no. 7862, pp. 265--270, 2021.

\bibitem{molzahn2017survey}
D.~K. Molzahn, F.~D{\"o}rfler, H.~Sandberg, S.~H. Low, S.~Chakrabarti,
  R.~Baldick, and J.~Lavaei, ``A survey of distributed optimization and control
  algorithms for electric power systems,'' \emph{IEEE Transactions on Smart
  Grid}, vol.~8, no.~6, pp. 2941--2962, 2017.

\bibitem{li2017dynamical}
S.~E. Li, Y.~Zheng, K.~Li, Y.~Wu, J.~K. Hedrick, F.~Gao, and H.~Zhang,
  ``Dynamical modeling and distributed control of connected and automated
  vehicles: Challenges and opportunities,'' \emph{IEEE Intelligent
  Transportation Systems Magazine}, vol.~9, no.~3, pp. 46--58, 2017.

\bibitem{nedic2009distributed}
A.~Nedic and A.~Ozdaglar, ``Distributed subgradient methods for multi-agent
  optimization,'' \emph{IEEE Transactions on Automatic Control}, vol.~54,
  no.~1, pp. 48--61, 2009.

\bibitem{yuan2016convergence}
K.~Yuan, Q.~Ling, and W.~Yin, ``On the convergence of decentralized gradient
  descent,'' \emph{SIAM Journal on Optimization}, vol.~26, no.~3, pp.
  1835--1854, 2016.

\bibitem{shi2015extra}
W.~Shi, Q.~Ling, G.~Wu, and W.~Yin, ``{EXTRA}: An exact first-order algorithm
  for decentralized consensus optimization,'' \emph{SIAM Journal on
  Optimization}, vol.~25, no.~2, pp. 944--966, 2015.

\bibitem{shi2015proximal}
------, ``A proximal gradient algorithm for decentralized composite
  optimization,'' \emph{IEEE Transactions on Signal Processing}, vol.~63,
  no.~22, pp. 6013--6023, 2015.

\bibitem{chang2014multi}
T.~H. Chang, M.~Hong, and X.~Wang, ``Multi-agent distributed optimization via
  inexact consensus {ADMM},'' \emph{IEEE Transactions on Signal Processing},
  vol.~63, no.~2, pp. 482--497, 2014.

\bibitem{shi2014linear}
W.~Shi, Q.~Ling, K.~Yuan, G.~Wu, and W.~Yin, ``On the linear convergence of the
  {ADMM} in decentralized consensus optimization,'' \emph{IEEE Transactions on
  Signal Processing}, vol.~62, no.~7, pp. 1750--1761, 2014.

\bibitem{zhang2020penalty}
J.~Zhang, A.~M.-C. So, and Q.~Ling, ``A penalty alternating direction method of
  multipliers for decentralized composite optimization,'' in \emph{Proceedings
  of IEEE International Conference on Acoustics, Speech and Signal Processing},
  2020, pp. 5745--5749.

\bibitem{yuan2018exact1}
K.~Yuan, B.~Ying, X.~Zhao, and A.~H. Sayed, ``Exact diffusion for distributed
  optimization and learning {Part I}: Algorithm development,'' \emph{IEEE
  Transactions on Signal Processing}, vol.~67, no.~3, pp. 708--723, 2018.

\bibitem{yuan2018exact}
------, ``Exact diffusion for distributed optimization and learning {Part
  {II}}: Convergence analysis,'' \emph{IEEE Transactions on Signal Processing},
  vol.~67, no.~3, pp. 724--739, 2018.

\bibitem{xu2015augmented}
J.~Xu, S.~Zhu, Y.~C. Soh, and L.~Xie, ``Augmented distributed gradient methods
  for multi-agent optimization under uncoordinated constant stepsizes,'' in
  \emph{Proceedings of the 54th IEEE Conference on Decision and Control}, 2015,
  pp. 2055--2060.

\bibitem{qu2017harnessing}
G.~Qu and N.~Li, ``Harnessing smoothness to accelerate distributed
  optimization,'' \emph{IEEE Transactions on Control of Network Systems},
  vol.~5, no.~3, pp. 1245--1260, 2017.

\bibitem{sun2019convergence}
G.~Scutari and Y.~Sun, ``Distributed nonconvex constrained optimization over
  time-varying digraphs,'' \emph{Mathematical Programming}, vol. 176, no.~1,
  pp. 497--544, 2019.

\bibitem{mokhtari2016network}
A.~Mokhtari, Q.~Ling, and A.~Ribeiro, ``{Network Newton} distributed
  optimization methods,'' \emph{IEEE Transactions on Signal Processing},
  vol.~65, no.~1, pp. 146--161, 2016.

\bibitem{bajovic2017newton}
D.~Bajovic, D.~Jakovetic, N.~Krejic, and N.~K. Jerinkic, ``Newton-like method
  with diagonal correction for distributed optimization,'' \emph{SIAM Journal
  on Optimization}, vol.~27, no.~2, pp. 1171--1203, 2017.

\bibitem{mansoori2019fast}
F.~Mansoori and E.~Wei, ``A fast distributed asynchronous {Newton-based}
  optimization algorithm,'' \emph{IEEE Transactions on Automatic Control},
  2019.

\bibitem{eisen2017decentralized}
M.~Eisen, A.~Mokhtari, and A.~Ribeiro, ``{Decentralized quasi-Newton
  methods},'' \emph{IEEE Transactions on Signal Processing}, vol.~65, no.~10,
  pp. 2613--2628, 2017.

\bibitem{mokhtari2016dqm}
A.~Mokhtari, W.~Shi, Q.~Ling, and A.~Ribeiro, ``{DQM: Decentralized
  quadratically approximated alternating direction method of multipliers},''
  \emph{IEEE Transactions on Signal Processing}, vol.~64, no.~19, pp.
  5158--5173, 2016.

\bibitem{mokhtari2016decentralized}
------, ``A decentralized second-order method with exact linear convergence
  rate for consensus optimization,'' \emph{IEEE Transactions on Signal and
  Information Processing over Networks}, vol.~2, no.~4, pp. 507--522, 2016.

\bibitem{eisen2019primal}
M.~Eisen, A.~Mokhtari, and A.~Ribeiro, ``A primal-dual {quasi-Newton} method
  for exact consensus optimization,'' \emph{IEEE Transactions on Signal
  Processing}, vol.~67, no.~23, pp. 5983--5997, 2019.

\bibitem{9303887}
J.~{Zhang}, Q.~{Ling}, and A.~M.-C. So, ``A {Newton} tracking algorithm with
  exact linear convergence rate for decentralized consensus optimization,'' in
  \emph{Proceedings of the 59th IEEE Conference on Decision and Control}, 2020,
  pp. 2317--2322.

\bibitem{li2020communication}
B.~Li, S.~Cen, Y.~Chen, and Y.~Chi, ``Communication-efficient distributed
  optimization in networks with gradient tracking and variance reduction,'' in
  \emph{Proceedings of International Conference on Artificial Intelligence and
  Statistics}, 2020, pp. 1662--1672.

\bibitem{daneshmand2021newton}
A.~Daneshmand, G.~Scutari, P.~Dvurechensky, and A.~Gasnikov, ``Newton method
  over networks is fast up to the statistical precision,'' \emph{arXiv preprint
  arXiv:2102.06780}, 2021.

\bibitem{zhang2020distributed}
J.~Zhang, K.~You, and T.~Ba{\c{s}}ar, ``Distributed adaptive {Newton} methods
  with globally superlinear convergence,'' \emph{arXiv preprint
  arXiv:2002.07378}, 2020.

\bibitem{chen2012diffusion}
J.~Chen and A.~H. Sayed, ``Diffusion adaptation strategies for distributed
  optimization and learning over networks,'' \emph{IEEE Transactions on Signal
  Processing}, vol.~60, no.~8, pp. 4289--4305, 2012.

\bibitem{tang2018d}
H.~Tang, X.~Lian, M.~Yan, C.~Zhang, and J.~Liu, ``{D}$^2$: Decentralized
  training over decentralized data,'' in \emph{Proceedings of International
  Conference on Machine Learning}, 2018, pp. 4848--4856.

\bibitem{pu2019sharp}
S.~Pu, A.~Olshevsky, and I.~C. Paschalidis, ``A sharp estimate on the transient
  time of distributed stochastic gradient descent,'' \emph{arXiv preprint
  arXiv:1906.02702}, 2019.

\bibitem{mokhtari2016dsa}
A.~Mokhtari and A.~Ribeiro, ``{DSA}: Decentralized double stochastic averaging
  gradient algorithm,'' \emph{Journal of Machine Learning Research}, vol.~17,
  no.~1, pp. 2165--2199, 2016.

\bibitem{xin2020variance}
R.~Xin, U.~A. Khan, and S.~Kar, ``Variance-reduced decentralized stochastic
  optimization with accelerated convergence,'' \emph{IEEE Transactions on
  Signal Processing}, vol.~68, pp. 6255--6271, 2020.

\bibitem{li2020optimal}
H.~Li, Z.~Lin, and Y.~Fang, ``{Optimal accelerated variance reduced EXTRA and
  DIGing for strongly convex and smooth decentralized optimization},''
  \emph{arXiv preprint arXiv:2009.04373}, 2020.

\bibitem{pu2020distributed}
S.~Pu and A.~Nedi{\'c}, ``Distributed stochastic gradient tracking methods,''
  \emph{Mathematical Programming}, 2021.

\bibitem{hendrikx2020optimal}
H.~Hendrikx, F.~Bach, and L.~Massoulie, ``An optimal algorithm for
  decentralized finite sum optimization,'' \emph{arXiv preprint
  arXiv:2005.10675}, 2020.

\bibitem{hendrikx2020dual}
H.~Hendrikx, F.~Bach, and L.~Massouli{\'e}, ``Dual-free stochastic
  decentralized optimization with variance reduction,'' in \emph{Proceedings of
  Advances in Neural Information Processing Systems}, 2020.

\bibitem{zhu2010discrete}
M.~Zhu and S.~Mart{\'\i}nez, ``Discrete-time dynamic average consensus,''
  \emph{Automatica}, vol.~46, no.~2, pp. 322--329, 2010.

\bibitem{boyd2004fastest}
S.~Boyd, P.~Diaconis, and L.~Xiao, ``Fastest mixing {Markov} chain on a
  graph,'' \emph{SIAM Review}, vol.~46, no.~4, pp. 667--689, 2004.

\bibitem{nedic2018network}
A.~Nedi{\'c}, A.~Olshevsky, and M.~G. Rabbat, ``Network topology and
  communication-computation tradeoffs in decentralized optimization,''
  \emph{Proceedings of the IEEE}, vol. 106, no.~5, pp. 953--976, 2018.

\bibitem{pillai2005perron}
S.~U. Pillai, T.~Suel, and S.~Cha, ``The {Perron-Frobenius} theorem: {Some} of
  its applications,'' \emph{IEEE Signal Processing Magazine}, vol.~22, no.~2,
  pp. 62--75, 2005.

\bibitem{berglund2021distributed}
E.~Berglund, S.~Magnusson, and M.~Johansson, ``Distributed newton method over
  graphs: Can sharing of second-order information eliminate the condition
  number dependence,'' \emph{IEEE Signal Processing Letters}, 2021.

\bibitem{horn2012matrix}
R.~A. Horn and C.~R. Johnson, \emph{Matrix analysis}.\hskip 1em plus 0.5em
  minus 0.4em\relax Cambridge university press, 2012.

\end{thebibliography}

%\begin{thebibliography}{1}

%\bibitem{IEEEhowto:kopka}
%H.~Kopka and P.~W. Daly, \emph{A Guide to \LaTeX}, 3rd~ed.\hskip 1em plus
%  0.5em minus 0.4em\relax Harlow, England: Addison-Wesley, 1999.

%\end{thebibliography}

% biography section
%
% If you have an EPS/PDF photo (graphicx package needed) extra braces are
% needed around the contents of the optional argument to biography to prevent
% the LaTeX parser from getting confused when it sees the complicated
% \includegraphics command within an optional argument. (You could create
% your own custom macro containing the \includegraphics command to make things
% simpler here.)
%\begin{IEEEbiography}[{\includegraphics[width=1in,height=1.25in,clip,keepaspectratio]{mshell}}]{Michael Shell}
% or if you just want to reserve a space for a photo:

%\begin{IEEEbiography}{Michael Shell}
%Biography text here.
%\end{IEEEbiography}

%% if you will not have a photo at all:
%\begin{IEEEbiographynophoto}{John Doe}
%Biography text here.
%\end{IEEEbiographynophoto}

% insert where needed to balance the two columns on the last page with
% biographies
%\newpage

%\begin{IEEEbiographynophoto}{Jane Doe}
%Biography text here.
%\end{IEEEbiographynophoto}

% You can push biographies down or up by placing
% a \vfill before or after them. The appropriate
% use of \vfill depends on what kind of text is
% on the last page and whether or not the columns
% are being equalized.

%\vfill

% Can be used to pull up biographies so that the bottom of the last one
% is flush with the other column.
%\enlargethispage{-5in}

% that's all folks
\end{document}